\theoremstyle{change}
\newcommand{\leftexp}[2]{{\vphantom{#2}}^{#1}%
      \kern-\scriptspace%
      {#2}}
\renewcommand{\Im}{\mathrm{Im}}
\renewcommand{\Re}{\mathrm{Re}}
\renewcommand{\H}{\mathbb H}
\newcommand{\A}{{\mathbb A}}
\newcommand{\D}{{\mathcal D}}
\newcommand{\Q}{{\mathbb Q}}
\newcommand{\Z}{{\mathbb Z}}
\newcommand{\R}{{\mathbb R}}
\newcommand{\C}{{\mathbb C}}
\newcommand{\bs}{\backslash}
\newcommand{\p}{\mathfrak p}
\newcommand{\OF}{{\mathfrak o}}
\newcommand{\GL}{{\rm GL}}
\newcommand{\GSpin}{{\rm GSpin}}
\newcommand{\SO}{{\rm SO}}
\newcommand{\GSp}{{\rm GSp}}
\newcommand{\Sp}{{\rm Sp}}
\newcommand{\PGSp}{{\rm PGSp}}
\newcommand{\Aut}{{\rm Aut}}
\newcommand{\sgn}{{\rm sgn}}
\newcommand{\vol}{{\mathrm {vol}}}
\newcommand{\sym}{{\rm sym}}
\newcommand{\vl}{{\rm vol}}
\newcommand{\mat}[4]{{\setlength{\arraycolsep}{0.5mm}\left[
\begin{smallmatrix}#1&#2\\#3&#4\end{smallmatrix}\right]}}
\newcommand{\qed}{\hspace*{\fill}\rule{1ex}{1ex}}
\newcommand{\forget}[1]{}
\def\qdots{\mathinner{\mkern1mu\raise0pt\vbox{\kern7pt\hbox{.}}\mkern2mu
\raise3.4pt\hbox{.}\mkern2mu\raise7pt\hbox{.}\mkern1mu}}
\newenvironment{proof}{\vspace{1ex}\noindent{\it Proof.}\hspace{0.1em}}
	{\hfill\qed\vspace{2ex}}
\newtheorem{lemma}{Lemma.}[section]
\newtheorem{theorem}[lemma]{Theorem.}
\newtheorem{corollary}[lemma]{Corollary.}
\newtheorem{proposition}[lemma]{Proposition.}
\newtheorem{definition}[lemma]{Definition.}
\newtheorem{remark}[lemma]{Remark.}
\begin{document}

\bibliographystyle{plain}
\thispagestyle{empty}
\begin{center}
 {\bf\Large Integral representation and critical $L$-values for holomorphic forms on $\GSp_{2n} \times \GL_1$}

 \vspace{3ex}
 Ameya Pitale, Abhishek Saha\footnote{A.S. acknowledges the support of the EPSRC grant EP/L025515/1.}, Ralf Schmidt
\end{center}

\begin{abstract}We prove an  explicit integral representation -- involving the pullback of a suitable Siegel Eisenstein series -- for the twisted standard $L$-function associated to a holomorphic vector-valued Siegel cusp form of degree $n$ and arbitrary level. In contrast to all previously proved pullback formulas in this situation, our  formula involves only scalar-valued functions despite being applicable to $L$-functions of vector-valued Siegel cusp forms.
The key new ingredient in our method is a novel choice of local vectors at the archimedean place which allows us to exactly compute the archimedean local integral.

By specializing our integral representation to the case $n=2$, we are able to prove a
reciprocity law -- predicted by Deligne's conjecture -- for the critical special values of the twisted standard $L$-function for vector-valued Siegel cusp forms of degree 2 and arbitrary level. This arithmetic application generalizes previously proved critical-value results for the full level case. The  proof of this application uses our recent structure theorem \cite{PSS14} for the space of nearly holomorphic
 Siegel modular forms of degree 2 and arbitrary level.

\end{abstract}
\tableofcontents

\section{Introduction}
\subsection{Critical \texorpdfstring{$L$}{}-values}The critical values of $L$-functions attached to cohomological cuspidal automorphic representations are objects of deep arithmetic significance. In particular, a famous conjecture of Deligne \cite{deligneconj} predicts that these values are algebraic numbers up to multiplication by suitable periods\footnote{However, it is often a non-trivial problem to relate the automorphic periods to Deligne's motivic periods.}. One can go even further and ask for the prime ideal factorizations of these algebraic numbers (once the period is suitably normalized). This goes beyond Deligne's conjecture and is related to the Bloch-Kato conjectures.

The first results on the algebraicity of critical $L$-values of automorphic representations were obtained by Shimura \cite{shi59, shi75, shi76}, who considered automorphic representations of $\GL_2$ (and twists and Rankin-Selberg products of these). For higher rank groups, initial steps were taken by Harris \cite{harsieg} and Sturm \cite{sturm} in 1981, who considered automorphic representations of $\GSp_{2n} \times \GL_{1}$ whose finite part is unramified and whose infinity type is a holomorphic discrete series with scalar minimal $K$-type. Since then, there has been considerable work in this area and algebraicity results for $L$-values of automorphic representations on various algebraic groups have been proved; the general problem, however, remains very far from being resolved.

In this paper we revisit the case of the standard $L$-function on $\GSp_{2n} \times \GL_{1}$. As mentioned earlier, this is the first case outside $\GL_2$ that was successfully tackled, with the (independent) results of Harris and Sturm back in 1981. However, the automorphic representations considered therein were very special and corresponded, from the classical point of view, to scalar-valued Siegel cusp forms of full level. Subsequent works on the critical $L$-values of Siegel cusp forms by B\"ocherer \cite{boch1985}, Mizumoto \cite{miz}, Shimura \cite{shibook2}, Kozima \cite{kozima} and others strengthened and extended these results in several directions. Nonetheless, a proof of algebraicity of critical $L$-values for  holomorphic forms on $\GSp_{2n} \times \GL_{1}$ in full generality has not yet been achieved, \emph{even for $n=2$}.

We prove the following result for $n=2$, which applies to representations whose finite part is arbitrary and whose archimedean part can be (almost) any holomorphic
discrete series.\footnote{We omit certain holomorphic discrete series representations of ``low weight", namely those with $k_2<6$, as well as the ones that do not contribute to cohomology, namely those with $k_1 \not\equiv k_2 \pmod{2}$.}
\begin{theorem}\label{t:mainintro}
 Let $k_1 \ge k_2 \ge 6$, $k_1 \equiv k_2 \pmod{2}$ be integers. For each cuspidal automorphic representation $\pi$ on $\GSp_4(\A_\Q)$ with $\pi_\infty$ isomorphic to the holomorphic discrete series representation with highest weight $(k_1, k_2)$, there  exists a real number $C(\pi)$ with the following properties.
 \begin{enumerate}
  \item $C(\pi) = C(\pi')$ whenever $\pi$ and $\pi'$ are nearly equivalent.
  \item $C(\pi) = C(\pi  \otimes (\psi\circ\mu))$ for any Dirichlet character $\psi$. Here $\mu: \GSp_4 \rightarrow \GL_1$ denotes the standard multiplier map.
  \item For  any Dirichlet character $\chi$ satisfying $\chi(-1) = (-1)^{k_2}$, any finite subset $S$ of the places of $\Q$ that includes the place at infinity, any integer $r$ satisfying $4 \le r \le k_2-2$, $r \equiv k_2 \pmod{2}$,  and any $\sigma \in \Aut(\C)$, we have
  \begin{equation}\label{e:mainarith}
   \sigma \left(\frac{L^S(r,\pi \boxtimes \chi, \varrho_{5})}{(2\pi i)^{3r} G(\chi) G(\chi^2)^2 C(\pi)}\right) = \frac{L^S(r,{}^\sigma\!\pi \boxtimes{}^\sigma\!\chi, \varrho_{5})}{(2\pi i)^{3r} G({}^\sigma\!\chi) G({}^\sigma\!\chi^2)^2 C({}^\sigma\!\pi)},
  \end{equation}
  and therefore in particular
  $$
   \frac{L^S(r,\pi \boxtimes \chi, \varrho_{5})}{(2\pi i)^{3r} G(\chi) G(\chi^2)^2 C(\pi)} \in \Q(\pi, \chi).
  $$
  Above,  $G(\chi)$  denotes the Gauss sum, $L^S(s,\pi \boxtimes \chi, \varrho_{5})$ denotes the degree $5$ $L$-function (after omitting the local factors in $S$) associated to the representation $\pi \boxtimes \chi$ of $\GSp_4 \times \GL_{1}$, and $\Q(\pi, \chi)$ denotes the (CM) field generated by $\chi$ and the field of rationality for $\pi$.
\end{enumerate}
\end{theorem}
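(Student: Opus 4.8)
The plan is to combine the explicit integral representation established above (specialized to $n=2$) with the arithmetic theory of nearly holomorphic Siegel modular forms, in the spirit of Harris \cite{harsieg}, Sturm \cite{sturm} and B\"ocherer \cite{boch1985}, but now in the vector-valued, arbitrary-level setting. Fix a holomorphic cusp form $F$ generating $\pi$, normalized so that its Fourier coefficients lie in (and generate) $\Q(\pi)$. At the critical point $s=r$ the pullback formula expresses $L^S(r,\pi\boxtimes\chi,\varrho_{5})$, up to an explicit elementary constant, as a Petersson inner product $\langle F,\Lambda(E_r)\rangle$, where $\Lambda(E_r)$ is obtained from a Siegel Eisenstein series $E_r$ on a larger symplectic group by restriction to $\Sp_4\times\Sp_4$, integration in the second variable, and a differential operator producing a nearly holomorphic vector-valued form of degree $2$, of the same weight as $F$ and of level controlled by $\chi$ and $S$. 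Crucially, the archimedean local integral has already been computed \emph{exactly}, so every constant in this identity is explicit; this is the source of the factor $(2\pi i)^{3r}$ and the Gauss sums $G(\chi)$, $G(\chi^2)^2$ appearing in \eqref{e:mainarith}.

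The next step is to make $\Lambda(E_r)$ arithmetic. By the classical theory of Siegel Eisenstein series at critical points (Sturm, Shimura, B\"ocherer \cite{boch1985}), after dividing by a suitable power of $2\pi i$ and by $G(\chi)G(\chi^2)^2$ the series $E_r$ is a nearly holomorphic form with Fourier coefficients in $\Q(\chi)$, and for $\sigma\in\Aut(\C)$ the $\sigma$-twist of this normalized series is the analogous series attached to ${}^\sigma\!\chi$. The operations defining $\Lambda$ (restriction, the differential operator, integration against $F$) are algebraic and $\Aut(\C)$-equivariant, so the same holds for $\Lambda(E_r)$. Now apply the structure theorem of \cite{PSS14}: since $F$ is holomorphic, $\langle F,\Lambda(E_r)\rangle=\langle F,\Lambda(E_r)^{\mathrm{hol}}\rangle$, where the holomorphic projection $\Lambda(E_r)^{\mathrm{hol}}$ is a genuine holomorphic cusp form of the same weight and level, computed by maps that are rational over the coefficient field and commute with $\Aut(\C)$. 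Hence $G:=\Lambda(E_r)^{\mathrm{hol}}$ is a holomorphic cusp form whose Fourier coefficients, after the normalization above, are algebraic and transform under $\sigma$ as expected.

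It remains to extract the period. By the standard algebraicity principle for Petersson products of a Hecke eigenform against an arithmetic cusp form (Shimura) — which uses that projection onto the Hecke eigensystem of $\pi$ away from $S$ is defined over $\Q(\pi)$ and is $\Aut(\C)$-equivariant — the quantity $\langle F,G\rangle/\langle F,F\rangle$ lies in $\Q(\pi,\chi)$ and satisfies $\sigma\!\left(\langle F,G\rangle/\langle F,F\rangle\right)=\langle {}^\sigma\!F,{}^\sigma\!G\rangle/\langle {}^\sigma\!F,{}^\sigma\!F\rangle$. Setting $C(\pi):=\langle F,F\rangle$, a positive real number, and combining the preceding identities yields \eqref{e:mainarith} and the membership in $\Q(\pi,\chi)$. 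For property (ii), the representation $\varrho_{5}$ factors through $\PGSp_4$, so $L^S(s,\pi\boxtimes\chi,\varrho_{5})$ is unchanged when $\pi$ is replaced by $\pi\otimes(\psi\circ\mu)$, and the same $C$ works; for property (i), near-equivalent $\pi,\pi'$ have the same Satake parameters outside a finite set, hence the same field of rationality and the same $L^S$ for large $S$, so $C$ may be chosen constant on each near-equivalence class (using that the resulting ratio of periods lies in the common rationality field).

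I expect the principal obstacle to be the passage through \cite{PSS14}: one must check that the restricted and differentiated Eisenstein series genuinely lies in the space of nearly holomorphic forms treated by the structure theorem \emph{for the relevant arbitrary level and vector-valued weight}, and that the holomorphic projection furnished there hides no transcendental constant, so that it is $\Aut(\C)$-equivariant in exactly the form required. This bookkeeping is feasible precisely because the exact evaluation of the archimedean integral in the first part of the paper pins down the weight of $\Lambda(E_r)$ and the archimedean normalizing factor unambiguously; without it one would be forced back to the matrix-coefficient-valued sections of the earlier literature, for which this descent is much harder to control in the vector-valued case.
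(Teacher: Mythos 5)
Your overall architecture --- evaluate the explicit pullback at a critical point, make the Eisenstein series arithmetic, project onto the relevant cuspidal subspace, and extract the period by a linear-algebra/Galois-equivariance argument --- is the one the paper follows. But your first step does not connect to the integral representation that is actually available. The pullback proved here (Theorem \ref{classical-integral-repn}, Corollary \ref{criticalcor}) pairs the \emph{scalar-valued nearly holomorphic} form $F=U^{m/2}F_0$ of weight $k=k_1$ directly against the scalar-valued nearly holomorphic Eisenstein series restricted to $\H_2\times\H_2$; no differential operator and no vector-valued object occurs. Your version --- a holomorphic vector-valued $F$ paired against a differentiated, vector-valued $\Lambda(E_r)$ --- is the B\"ocherer--Satoh--Yamazaki/Kozima route, and an explicit such formula (with exact archimedean constant, character twist, and arbitrary level) is precisely what is \emph{not} available; circumventing it is the point of the scalar archimedean vector chosen in Section 5. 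Consequently your appeal to holomorphic projection also misfires: since the paper's $F$ is not holomorphic, $\langle F,\cdot\rangle$ does not factor through holomorphic projection. What is used instead is the $\Aut(\C)$-equivariant \emph{isotypic} projection $\mathfrak{p}^\circ_{\ell,m}$ of \cite{PSS14} onto the space $V_N=U^{m/2}(S_{\ell,m}(\Gamma_4(N)))$ of nearly holomorphic cusp forms generating $\pi_{\mathbf k}$, followed by the spectral expansion \eqref{keyidentity} and the comparison of isotypic components in Proposition \ref{mainprop}. You correctly identify the equivariance of the projection as the crux, but the projection you invoke is not the one that makes the argument close.

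Two further points on the period. First, $C(\pi):=\langle F,F\rangle$ is off by a transcendental factor: the reciprocity one actually obtains is for $L^S(r,\pi\boxtimes\chi,\varrho_5)\big/\big((2\pi i)^{2k+3r}G(\chi)G(\chi^2)^2\langle F,F\rangle\big)$ (Theorem \ref{t:globalthmarithmetic}), so the period must carry the factor $(2\pi i)^{2k}$ (equivalently $\pi^{2k_1}$ up to sign) to leave only $(2\pi i)^{3r}$ in \eqref{e:mainarith}. Second, making $C$ genuinely constant on the class of $\pi$ under near-equivalence and twisting needs more than the remark that the ratio of periods lies in a common rationality field: one must show that for $\pi_1\approx\pi_2$ the ratio $\langle F_1,F_1\rangle/\langle F_2,F_2\rangle$ is $\Aut(\C)$-equivariant (Corollary \ref{cor:petersson}, obtained by applying the reciprocity law twice and dividing), and then choose for each class a distinguished form $G_{[\pi]}$ whose Fourier coefficients lie in the class field $\Q([\pi])$ (which may be strictly smaller than $\Q(\pi)$) with $G_{{}^\sigma[\pi]}={}^\sigma G_{[\pi]}$, so that $C({}^\sigma\pi)$ is defined compatibly for all $\sigma$ simultaneously. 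Without that normalization the two sides of \eqref{e:mainarith} are not both pinned down, and properties (i) and (ii) do not follow from the equivariance statement alone.
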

Before going further, we make a few remarks pertaining to the statement above.

\begin{remark}Note that we can take $S = \{\infty\}$. In particular this gives us an algebraicity result for the full finite part of the global $L$-function.
\end{remark}

\begin{remark}Our proofs show that $C(\pi) = \pi^{2k_1} \langle F, F\rangle$ where $F$ equals a certain \textbf{nearly holomorphic} modular form of weight $k_1$. Alternatively one can take $C(\pi) = \pi^{k_1+k_2} \langle F_0, F_0\rangle$ where $F_0$ equals a certain holomorphic \textbf{vector-valued} modular form of weight $\det^{k_2}\sym^{k_1-k_2}$.
\end{remark}

\begin{remark}Observe that the $L$-function $L(s,\pi \boxtimes \chi, \varrho_{5})$ considered above is \emph{not} equal to the $L$-function $L(s,\pi \otimes (\chi\circ \mu), \varrho_{5})$. In fact, $L(s,\pi \boxtimes \chi, \varrho_{5})$ cannot be obtained as the $L$-function of an automorphic representation on $\GSp_4(\A_\Q)$, unless $\chi$ is trivial.
\end{remark}

Classically, Theorem \ref{t:mainintro} applies to vector-valued holomorphic Siegel cusp forms of weight $\det^{k_2} \sym^{k_1 - k_2}$ with respect to an arbitrary congruence subgroup of $\Sp_4(\Q)$. The only previously known result for critical $L$-values of holomorphic Siegel cusp forms in the vector-valued case ($k_1 > k_2$) is due to Kozima \cite{kozima}. Kozima's result only applies to full-level Siegel cusp forms (and also only deals with the case $\chi=1$). In contrast, our theorem, which relies on an adelic machinery to separate out the difficulties place by place, is more general in that it applies to arbitrary congruence subgroups.

In fact, our theorem appears to be new even in the scalar-valued case $k_1=k_2$, as all previously proved special value results for this $L$-function (see, e.g., \cite{shibook2}) assumed that the Siegel cusp form is with respect to a $\Gamma_0(N)$-type congruence subgroup.

The main achievement of this paper, however, is \emph{not} Theorem \ref{t:mainintro} above but an explicit integral representation (Theorem \ref{t:intrepintro} below) for the standard $L$-function $L(s,\pi \boxtimes \chi, \varrho_{2n+1})$ on $\GSp_{2n} \times \GL_1$. This integral representation applies to any $\pi \boxtimes\chi$ such that $\pi_\infty$ is a holomorphic discrete series representation with highest weight $(k_1, k_2, \ldots, k_n)$ with all $k_i$ of the same parity, and $\chi_\infty = \sgn^{k_1}$ (there is no restriction on the finite part of $\pi$). While Theorem \ref{t:intrepintro} is formally similar to the well-known pullback formula (or doubling method) mechanism due to Garrett, Shimura, Piatetski-Shapiro--Rallis and others, what distinguishes it from previous works is that (despite the generality of our setup) all our constants are completely explicit. Previously, such an explicit pullback formula in this case  had been proved only for very special archimedean types. The key innovation that makes Theorem \ref{t:intrepintro} possible is a certain novel choice of  local vectors which allows us to exactly compute all local zeta integrals appearing in the pullback formula, including at the ramified and archimedean places. Note that Theorem \ref{t:intrepintro} applies to any $n$; we restrict to $n=2$ only in the final section of this paper where we prove Theorem \ref{t:mainintro}.

In the rest of this introduction, we explain the above ideas in more detail.

\subsection{Integral representations for \texorpdfstring{$\GSp_{2n} \times \GL_1$}{} and the pullback formula}\label{s:intropullback}

The first integral representation for the standard (degree $2n+1$) $L$-function for automorphic representations of $\GSp_{2n} \times \GL_1$ was discovered by Andrianov and Kalinin \cite{andkal} in 1978. The integral representation of Andrianov--Kalinin applied to holomorphic scalar-valued Siegel cusp forms of even degree $n$ with respect to $\Gamma_0(N)$ type congruence subgroups, and involved a certain theta series. They used their integral representation to prove results about the meromorphic continuation of the $L$-function; they also derived the functional equation when 8 divides $n$. The results of Harris \cite{harsieg} and Sturm \cite{sturm} mentioned earlier also relied on this integral representation.

A remarkable new integral representation, commonly referred to as the pullback formula, was discovered in the early 1980s by Garrett \cite{gar83} and involved pullbacks of Eisenstein series. Roughly speaking, the pullback formula in the simplest form says that \begin{equation}\label{garrpullback}
 \int\limits_{\Sp_{2n}(\Z) \bs \H_n} F(-\overline{Z_1}) E_k\left(\mat{Z_1}{}{}{Z_2}, s    \right) dZ_1\approx L(2s+k-n,\pi,  \varrho_{2n+1})F(Z_2),
\end{equation}
where $F$ is a Siegel cusp form of degree $n$ and full level that is an eigenform for all the Hecke operators, $E_k\left(Z, s    \right)$ is an Eisenstein series of degree $2n$ and full level (that is, a holomorphic Siegel modular form of weight $k$ when $s=0$), $L(s,\pi,  \varrho_{2n+1})$ denotes the degree $2n+1$ $L$-function for $\pi$, and the symbol $\approx$ indicates that we are ignoring some unimportant factors.
The pullback formula was applied by B\"ocherer \cite{boch1985} to prove various results about the functional equation and algebraicity of critical $L$-values that went well beyond what had been possible by the Andrianov--Kalinin formula. Subsequently, Shimura generalized the pullback formula to a wide variety of contexts (including other groups). We refer to Shimura's books \cite{shibook1, shibook2} for further details.

In the last two decades, the pullback formula has been used to prove a host of results related to the arithmetic and analytic properties of $L$-functions associated to Siegel cusp forms. However, most of these results involve significant restrictions. To get the most general results possible, it is necessary to extend \eqref{garrpullback} to a) incorporate \emph{characters}, b) include Siegel cusp forms with respect to arbitrary \emph{congruence subgroups}, and c) cover the case of \emph{vector-valued} Siegel cusp forms. The first two of these objectives have to a large extent been achieved for scalar-valued Siegel cusp forms. However, the situation with vector-valued forms is quite different. Following the important work of B\"ocherer--Satoh--Yamazaki \cite{bochsatyama}, there have been a few results about vector-valued forms  by Takei \cite{takei92},  Takayanagi \cite{takayanagi93, takayanagi95}, Kozima \cite{kozima, kozima08}, and others. Unfortunately, all these works are valid only for full level Siegel cusp forms and involve strong restrictions on the archimedean type.\footnote{There is also a survey talk by B\"ocherer (Hakuba, 2012) on the pullback formula for more general vector-valued forms, but it still restricts to full level and does not incorporate characters.} To get the most general pullback formula for vector-valued Siegel cusp form, it seems necessary to use the adelic approach. All the results about vector-valued forms described above use certain differential operators that take scalar-valued functions to tensor products of vector-valued functions \cite{bochsatyama, ibu99} and it seems unclear if these can be incorporated into an adelic treatment.

On the other hand, Piatetski-Shapiro and Rallis \cite{psrallis83, psrallis87}  discovered a very general  identity (the doubling method) on classical groups. When the group is $\Sp_{2n}$, this is essentially a generalized, adelic version of the pullback formula described above.\footnote{However, at least in the original formulation, the doubling method did not incorporate characters.} However,  Piatetski-Shapiro and Rallis computed the relevant local integrals only at the unramified places (where they chose the vectors to be the unramified vector). Thus, to get an explicit integral representation from the basic identity of Piatetski-Shapiro and Rallis for more general automorphic representations, it is necessary to make specific choices of vectors at the ramified and archimedean primes such that one can exactly evaluate all the associated local integrals. So far, this has been carried out in very few situations.

In this paper, we begin by reproving the basic identity of Piatetski-Shapiro and Rallis in a somewhat different manner that is more convenient for our purposes. Let $\pi$ be a cuspidal automorphic representation of $\GSp_{2n}(\A)$ and
$\chi$ a Hecke character.
 Our central object of investigation is the global integral
\begin{equation}\label{maindefinition1}
 Z(s; f, \phi)(g) = \int\limits_{\Sp_{2n}(F) \bs\,g\cdot\Sp_{2n}(\A)} E^{\chi}((
 h  ,g), s, f) \phi(h)\,dh.
\end{equation}
Here, $E^{\chi}( - ,s, f)$ is an Eisenstein series on $\GSp_{4n}(\A)$ associated to a choice of section $f \in I(\chi, s)$, the pair $(h,g)$ represents an element of $\GSp_{4n}(\A)$ corresponding to the diagonal embedding of  $\GSp_{2n}(\A) \times  \GSp_{2n}(\A)$, and $\phi$ is an automorphic form in the space of $\pi$. The integral \eqref{maindefinition1} represents a meromorphic function of $s$ on all of $\C$ due to the analytic properties of the Eisenstein series. As we will prove, away from any poles, the function $g\mapsto Z(s; f, \phi)(g)$ is again an \emph{automorphic form in the space of $\pi$.}

Next, assume that $\phi = \otimes_v \phi_v$ and $f=\otimes f_v$ are factorizable. We define, for each place $v$, \emph{local zeta integrals}
\begin{equation}\label{Rsflocaleq1}
 Z_v(s;f_v,\phi_v):=\int\limits_{\Sp_{2n}(F_v)} f_v(Q_n \cdot (h, 1), s) \pi_v(h)\phi_v\,dh,
\end{equation}
where $Q_n$ is a certain explicit matrix in $\GSp_{2n}(\A)$. It turns out (see Proposition \ref{interopprop}) that $Z_v(s;f_v,\phi_v)$ converges to an element in the space of $\pi_v$ for real part of $s$ large enough.

Our ``Basic Identity'' (Theorem \ref{basicidentity}) asserts that the automorphic forms $Z(s;f, \phi)$ corresponds to the pure tensor $\otimes_v Z_v(s;f_v, \phi_v)$.
Now assume that, for all $v$, the vectors $\phi_v$ and the sections $f_v$ can be chosen in such a way that $Z_v(s,f_v,\phi_v)=c_v(s)\phi_v$ for an explicitly computable function $c_v(s)$. Then we obtain a \emph{global integral representation}
\begin{equation}\label{globalintrepeq1}
 Z(s; f, \phi)(g)=\Big(\prod_vc_v(s)\Big)\phi.
\end{equation}
In this way the Euler product $\prod_vc_v(s)$, convergent for ${\rm Re}(s)$ large enough, inherits analytic properties from the left hand side of \eqref{globalintrepeq1}. If this Euler product, up to finitely many factors and up to ``known'' functions, represents an interesting $L$-function, one can thus derive various properties of said $L$-function.

Our main local task is therefore to choose the vectors $\phi_v$ and the sections $f_v$ such that $Z_v(s,f_v,\phi_v)=c_v(s)\phi_v$ for an explicitly computable function $c_v(s)$. For a ``good'' non-archime\-dean place $v$ we will make the obvious unramified choices. The \emph{unramified calculation}, Proposition \ref{unramifiedcalculationprop}, then shows that
\begin{equation}\label{unramcalceq}
 c_v(s)=\frac{L((2n+1)s+1/2,\pi_v \boxtimes \chi_v, \varrho_{2n+1})}{L((2n+1)(s+1/2), \chi_v)\prod\limits_{i=1}^nL((2n+1)(2s+1)-2i, \chi_v^2)}.
\end{equation}
If $v$ is non-archimedean and some of the data is \emph{ramified}, it is possible to choose $\phi_v$ and $f_v$ such that $c_v(s)$ is a non-zero constant function; see Proposition \ref{prop:badplaces}. The idea is to choose $\phi_v$ to be invariant under a small enough principal congruence subgroup, and make the support of $f_v$ small enough. This idea was inspired by \cite{morimoto}.

\subsection{The choice of archimedean vectors and our main formula}
We  now explain our choice of $\phi_v$ and $f_v$ at a real place $v$, which represents one of the main new ideas of this paper. We  only treat the case of $\pi_v$ being a holomorphic discrete series representation, since this is sufficient for our application to Siegel modular forms. Assume first that $\pi_v$ has scalar minimal $K$-type of weight $k$, i.e., $\pi_v$ occurs as the archimedean component attached to a classical Siegel cusp form of weight $k$. Then it is natural to take for $\phi_v$ a vector of weight $k$ (spanning the minimal $K$-type of $\pi_v$), and for $f_v$ a vector of weight $k$ in $I(\chi_v,s)$. Both $\phi_v$ and $f_v$ are unique up to scalars. The local integral in this case is calculated in Proposition \ref{scalarminKtypeprop} and turns out to be an exponential times a rational function. The calculation is made possible by the fact that we have a simple formula for the matrix coefficient of $\phi_v$; see \eqref{matrixcoeffeq}.

Now assume that $\pi_v$ is a holomorphic discrete series representation with more general minimal $K$-type $\rho_{(k_1,\ldots,k_n)}$, where $k_1\geq\ldots\geq k_n>n$; see Sect.~\ref{holdiscsec} for notation. In this case we will \emph{not} work with the minimal $K$-type, but with the scalar $K$-type $\rho_{(k,\ldots,k)}$, where $k:=k_1$. We will show in Lemma \ref{scalarKtypeslemma} that $\rho_{(k,\ldots,k)}$ occurs in $\pi_v$ with multiplicity one. Let $\phi_v$ be the essentially unique vector spanning this $K$-type, and let $f_v$ again be the vector of weight $k$ in $I(\chi_v,s)$. The function $c_v(s)$ in this case is again an exponential times a rational function; see Proposition \ref{archzetaprop}. We note that our calculation for general minimal $K$-type uses the result of the scalar case, so the scalar case cannot be subsumed in the more general case. One difficulty in the general case is that we do not know a formula for the matrix coefficient of $\phi_v$, or in fact \emph{any} matrix coefficient. Instead, we use a completely different method, realizing $\phi_v$ as a vector in an induced representation.

Our archimedean calculations require two different integral formulas, which are both expressions for the Haar measure on $\Sp_{2n}(\R)$. The first formula, used in the scalar minimal $K$-type case, is with respect to the $KAK$ decomposition; see \eqref{KAKintegrationeq}. The second formula, used in the general case, is with respect to the Iwasawa decomposition; see \eqref{Sp2niwasawainteq2}. We will always normalize the Haar measure on $\Sp_{2n}(\R)$ in the ``classical'' way, characterized by the property \eqref{Ghaarpropeq1}. It is necessary for us to determine the precise constants relating the $KAK$ measure and the Iwasawa measure to the classical normalization. This will be carried out in Appendix \ref{measureapp}.

Finally, combining the archimedean and the non-archimedean calculations, we obtain an explicit formula for the right side of \eqref{globalintrepeq1}, which is our aforementioned pullback formula for $L$-functions on $\GSp_{2n} \times \GL_1$. This is Theorem \ref{global-thm} in the main body of the paper; below we state a rough version of this theorem.
\begin{theorem}\label{t:intrepintro}Let $\pi\cong\otimes \pi_p$ be an irreducible cuspidal automorphic representation of \linebreak $\GSp_{2n}(\A_\Q)$ such that $\pi_\infty$ is a holomorphic discrete series representation of highest weight $(k_1,k_2, \ldots, k_n)$, where $k=k_1\ge\ldots\ge k_n>n$ and all $k_i$ have the same parity. Let $\chi=\otimes\chi_p$ be a character of $\Q^\times \bs \A^\times$ such that $\chi_\infty = \sgn^k$.  Then we have the identity
\begin{align*}\label{global-int-formula}
  Z(s, f, \phi)(g) =  &\frac{L^S((2n+1)s+1/2,\pi \boxtimes \chi, \varrho_{2n+1})}{L^S((2n+1)(s+1/2), \chi)\prod_{j=1}^nL^S((2n+1)(2s+1)-2j, \chi^2)} \nonumber \\ &\times  i^{nk}\,\pi^{n(n+1)/2}  c((2n+1)s- 1/2)\phi(g),
 \end{align*}
  where $c(z)$ is an explicit function that takes non-zero rational values for integers $z$ with $0 \le z \le k_n-n$ and
  where $L^{S}(\ldots)$ denotes the global $L$-functions without the local factors corresponding to a certain (explicit) finite set of places $S$.
\end{theorem}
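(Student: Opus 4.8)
The plan is to reduce the global integral $Z(s,f,\phi)(g)$ to a product of purely local quantities by means of the Basic Identity, and then to evaluate each local factor using the choices of vectors prepared elsewhere in the paper. We take $\phi=\otimes_v\phi_v$ and $f=\otimes_v f_v$ factorizable and work first in the region where $\Re(s)$ is large enough that every local zeta integral $Z_v(s;f_v,\phi_v)$ converges (Proposition \ref{interopprop}); the identity will then be upgraded to all $s\in\C$ at the end by analytic continuation, using that the left side $Z(s;f,\phi)(g)$ is meromorphic because the Eisenstein series $E^\chi(-,s,f)$ is, and that all factors on the right apart from $L^S((2n+1)s+1/2,\pi\boxtimes\chi,\varrho_{2n+1})$ --- whose meromorphy is in fact a byproduct of the identity --- are manifestly meromorphic (Hecke $L$-functions together with the explicit function $c$). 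By the Basic Identity, Theorem \ref{basicidentity}, the function $g\mapsto Z(s;f,\phi)(g)$ lies in the space of $\pi$ and corresponds, under a fixed isomorphism $\pi\cong\otimes_v\pi_v$, to the pure tensor $\otimes_v Z_v(s;f_v,\phi_v)$. Hence it is enough to arrange at each place that $Z_v(s;f_v,\phi_v)=c_v(s)\phi_v$ for an explicit scalar function $c_v(s)$, which then yields $Z(s;f,\phi)(g)=\big(\prod_v c_v(s)\big)\phi(g)$.

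At each place $v$ outside a finite set $S$, where $\pi_v$ and $\chi_v$ are unramified, I take $\phi_v$ to be the spherical vector and $f_v$ the normalized spherical section; the unramified calculation, Proposition \ref{unramifiedcalculationprop}, then gives
\[
 c_v(s)=\frac{L((2n+1)s+1/2,\pi_v\boxtimes\chi_v,\varrho_{2n+1})}{L((2n+1)(s+1/2),\chi_v)\prod_{j=1}^n L((2n+1)(2s+1)-2j,\chi_v^2)}.
\]
At the finitely many bad non-archimedean places I invoke Proposition \ref{prop:badplaces}: taking $\phi_v$ invariant under a sufficiently deep principal congruence subgroup and $f_v$ supported on a sufficiently small set forces $c_v(s)$ to be a nonzero constant in $s$. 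These places, together with $\infty$, make up the set $S$ whose local factors are dropped from $L^S$.

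The core of the argument is the real place. I realize $\pi_\infty$ as the holomorphic discrete series of highest weight $(k_1,\dots,k_n)$ and --- the key novelty --- rather than using the minimal $K$-type I take $\phi_\infty$ to span the scalar $K$-type $\rho_{(k,\dots,k)}$ with $k=k_1$, which occurs in $\pi_\infty$ with multiplicity one by Lemma \ref{scalarKtypeslemma}, together with $f_\infty$ the weight-$k$ vector in $I(\chi_\infty,s)$. In the genuinely scalar case $k_1=\cdots=k_n$ this is handled in Proposition \ref{scalarminKtypeprop}: there is an explicit matrix coefficient for $\phi_\infty$ (see \eqref{matrixcoeffeq}), and integrating against it via the $KAK$ integration formula \eqref{KAKintegrationeq} produces an exponential times a rational function. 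In the general case no usable matrix coefficient is available, so I instead realize $\phi_\infty$ inside an induced representation, rewrite $Z_\infty(s;f_\infty,\phi_\infty)$ by the Iwasawa integration formula \eqref{Sp2niwasawainteq2}, and reduce the evaluation to the scalar computation just carried out; this is Proposition \ref{archzetaprop}, which yields $c_\infty(s)=i^{nk}\pi^{n(n+1)/2}\,c((2n+1)s-1/2)$ once the $KAK$- and Iwasawa-measures are matched to the classical normalization \eqref{Ghaarpropeq1} via the constants computed in Appendix \ref{measureapp}, with $c(z)$ taking nonzero rational values for integers $0\le z\le k_n-n$.

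It remains only to assemble the pieces: multiplying the $c_v(s)$ over all $v$, grouping the unramified factors into the partial $L$-functions and pulling out the archimedean constant $i^{nk}\pi^{n(n+1)/2}$ and the bad-place constants, gives the asserted identity (this is Theorem \ref{global-thm} in precise form), and analytic continuation removes the restriction on $\Re(s)$. The main obstacle --- and the new ingredient of the paper --- is the archimedean computation in the vector-valued case: the classical pullback-formula technology relies on an explicit matrix coefficient of the chosen local vector, which simply does not exist for a non-scalar minimal $K$-type, and the device of passing to the scalar $K$-type $\rho_{(k,\dots,k)}$, realizing it in an induced model, applying the Iwasawa-decomposition integral, and bootstrapping from the scalar case while keeping every constant explicit is precisely what makes the theorem go through; a secondary but unavoidable technical task is determining the exact measure constants of Appendix \ref{measureapp}.
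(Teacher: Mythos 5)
Your proposal is correct and follows essentially the same route as the paper: the Basic Identity (Theorem \ref{basicidentity}) reduces the global integral to a product of local zeta integrals, which are evaluated via Proposition \ref{unramifiedcalculationprop} at good places, Proposition \ref{prop:badplaces} at ramified places, and Proposition \ref{archzetaprop} at the real place (with the scalar $K$-type $\rho_{(k,\ldots,k)}$, the induced-model/Iwasawa computation bootstrapped from the scalar minimal $K$-type case, and the measure constants of Appendix \ref{measureapp}), followed by analytic continuation from the region of large $\Re(s)$. This is exactly the proof of Theorem \ref{global-thm} given in the paper.
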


The above formula can be reformulated in a classical language, which takes a similar form to \eqref{garrpullback} and involves functions $F(Z)$ and $E_{k,N}^\chi(Z, s)$ that correspond to $\phi$ and  $E^{\chi}( - ,s,f)$ respectively. In fact $F$ and $E_{k,N}^\chi( - , s)$ are (scalar-valued) smooth modular forms of weight $k$ (and degrees $n$ and $2n$ respectively) with respect to suitable congruence subgroups. We refer the reader to Theorem \ref{classical-integral-repn} for the full classical statement.  In  all previously proved classical pullback formulas \cite{bochsatyama, takayanagi93, takayanagi95} for $L(s, \pi \boxtimes \chi)$ with $\pi_\infty$ a general discrete series representation, the analogues of $F$ and $E_{k,N}^\chi( - , s)$ were vector-valued objects; in contrast, our formula involves only  scalar-valued functions. This is a key point of the present work.

We hope that our  pullback formula will be useful for arithmetic applications beyond what we pursue  in this paper. A particularly fruitful direction might be in the direction of the Bloch-Kato conjecture, as in recent work of Mahesh Agarwal, Jim Brown, Krzysztof Klosin, and others.
\subsection{Nearly holomorphic modular forms and  arithmeticity}
To obtain results about the algebraicity of critical $L$-values, we delve deeper into the arithmetic nature of the two smooth modular forms given above. The general arithmetic principle here is that whenever a smooth modular/automorphic form is holomorphic, or close to being holomorphic, it is likely to have useful arithmetic properties.  In this case, if $0 \le m \le \frac{k}2 - n -1$ is an integer, then we prove that $E_{k,N}^\chi(Z, -m)$ is a \emph{nearly holomorphic} Siegel modular form (of degree $2n$). The given range of $m$ corresponds to the integers for which the series defining $E_{k,N}^\chi(Z, -m)$ is absolutely convergent.\footnote{It should be possible to prove near-holomorphy for $m$ in the larger range $0 \le m \le \frac{k}2 - \frac{n+1}2$ using methods similar to that of Shimura \cite{Shimura1987, shibook2} but we do not do so for the sake of simplicity.} In fact, the function $E_{k,N}^\chi(Z, -m)$ is obtained by applying Maass-Shimura differential operators on a holomorphic Eisenstein series and consequently, a well-known theorem of Harris \cite{hareis} guarantees the arithmeticity of its Fourier coefficients.

The final step is  to prove that the inner product of $F(Z)$ and $E_{k,N}^\chi( Z , -m)$  is $\Aut(\C)$ equivariant. It is here that we are forced to assume $n=2$. In this case, our recent investigation of lowest weight modules \cite{PSS14} of $\Sp_4(\R)$ and in particular the ``structure theorem" proved therein allows us to define an $\Aut(\C)$ equivariant isotypic projection map from the space of \emph{all} nearly holomorphic modular forms to the subspace of cusp forms corresponding to a particular infinity-type. Once this is known, Theorem \ref{t:mainintro} follows by a standard linear algebra argument going back at least to Garrett \cite{gar2}.

It is worth contrasting our approach here with previously proved results on the critical $L$-values of holomorphic vector-valued Siegel cusp forms such as the result of Kozima \cite{kozima} mentioned earlier. In Kozima's work, the modular forms involved in the integral representation are vector-valued and the cusp form holomorphic; ours involves two scalar-valued modular forms that are not holomorphic. Our approach allows us to incorporate everything smoothly into an adelic setup and exactly evaluate the archimedean integral. But the price we pay is that the arithmeticity of the automorphic forms is not  automatic (as we cannot immediately appeal to the arithmetic geometry inherent in holomorphic modular forms). In particular, this is the reason we are forced to restrict ourselves to $n=2$ in the final section of this paper, where we prove Theorem \ref{t:mainintro}.

We expect that an analogue of the structure theorem for nearly holomorphic forms proved in \cite{PSS14} for $n=2$ should continue to hold for general $n$. This would immediately lead to an extension of Theorem \ref{t:mainintro} for any $n$. It would also be worth extending Theorem \ref{t:mainintro} to the full set of critical points in the right half plane (currently, it excludes the one or two critical points in the range $1 \le r \le 3$) by taking a closer look at the arithmeticity of the nearly holomorphic Eisenstein series outside the region of absolute convergence. We hope to come back to some of these problems in the future.

\section{Preliminaries}
\subsection{Basic notations and definitions}\label{basicnotdefsec}
Let $F$ be an algebraic number field and $\A$ the ring of adeles of $F$. For a positive integer $n$ let $G_{2n}$ be the algebraic $F$-group $\GSp_{2n}$, whose $F$-points are given by
$$
 G_{2n}(F)=\{g\in\GL(2n,F)\,:\,^tgJ_ng=\mu_n(g)J_n,\:\mu_n(g)\in F^\times\},\qquad J_n=\mat{}{I_n}{-I_n}{}.
$$
The symplectic group $\Sp_{2n}$ consists of those elements $g\in G_{2n}$ for which the multiplier $\mu_n(g)$ is $1$. Let $P_{2n}$ be the Siegel parabolic subgroup of $G_{2n}$, consisting of matrices whose lower left $n\times n$-block is zero. Let $\delta_{P_{2n}}$ be the modulus character of $P_{2n}(\A)$. It is given by
\begin{equation}\label{modulus-char}
 \delta_{P_{2n}}(\mat{A}{X}{}{v\,^t\!A^{-1}}) = |v^{-\frac{n(n+1)}2}\det(A)^{n+1}|, \qquad \text{ where } A \in \GL_n(\A),\: v \in \GL_1(\A),
\end{equation}
and $|\cdot|$ denotes the global absolute value, normalized in the standard way.

Fix the following embedding of $H_{2a,2b} := \{(g, g') \in G_{2a} \times G_{2b}: \mu_a(g) = \mu_b(g') \}$ in $G_{2a+2b}$:
\begin{equation}\label{embedding-defn}
H_{2a,2b} \ni (\mat{A_1}{B_1}{C_1}{D_1}, \mat{A_2}{B_2}{C_2}{D_2}) \longmapsto \left[\begin{smallmatrix}A_1&&-B_1\\&A_2&&B_2\\-C_1&&D_1\\&C_2&&D_2\end{smallmatrix}\right] \in \GSp_{2a+2b}.
\end{equation}
We will also let $H_{2a,2b}$ denote its image in $G_{2a+2b}$.

Let $G$ be any reductive algebraic group defined over $F$. For a place $v$ of $F$ let $(\pi_v,V_v)$ be an \emph{admissible representation} of $G(F_v)$. If $v$ is non-archimedean, then this means that every vector in $V_v$ is smooth, and that for every open-compact subgroup $\Gamma$ of $G(F_v)$ the space of fixed vectors $V_v^\Gamma$ is finite-dimensional. If $v$ is archimedean, then it means that $V_v$ is an admissible $(\mathfrak{g},K)$-module, where $\mathfrak{g}$ is the Lie algebra of $G(F_v)$ and $K$ is a maximal compact subgroup of $G(F_v)$. We say that $\pi_v$ is \emph{unitary} if there exists a $G(F_v)$-invariant (non-archimedean case) resp.\ $\mathfrak{g}$-invariant (archimedean case) hermitian inner product on $V_v$. In this case, and assuming that $\pi_v$ is irreducible, we can complete $V_v$ to a unitary Hilbert space representation $\bar V_v$, which is unique up to unitary isomorphism. We can recover $V_v$ as the subspace of $\bar V_v$ consisting of smooth (non-archimedean case) resp.\ $K$-finite (archimedean case) vectors.

We define automorphic representations as in \cite{BorelJacquet1979}. In particular, when we say ``automorphic representation of $G(\A)$'', we understand that at the archimedean places we do not have actions of $G(F_v)$, but of the corresponding $(\mathfrak{g},K)$-modules. All automorphic representations are assumed to be irreducible. Cuspidal automorphic representations are assumed to be unitary. Any such representation $\pi$ is isomorphic to a restricted tensor product $\otimes\pi_v$, where $\pi_v$ is an irreducible, admissible, unitary representation of $G(F_v)$.

For a place $v$ of $F$, let $\sigma,\chi_1,\cdots, \chi_n$ be characters of $F_v^\times$. We denote by $\chi_1\times \cdots \times \chi_n\rtimes\sigma$ the representation of $G_{2n}(F_v)$ parabolically induced from the character
\begin{equation}\label{GnBorelinducedeq}
 \left[\begin{smallmatrix}
  a_1&\cdots&*&*&\cdots&*\\&\ddots&\vdots&\vdots&&\vdots\\&&a_n&*&\cdots&*\\&&&a_0 a_1^{-1}&&\\&&&\vdots&\ddots&\\&&&*&\cdots&a_0a_n^{-1}
 \end{smallmatrix}\right]
 \longmapsto\chi_1(a_1)\cdot\ldots\cdot\chi_n(a_n)\sigma(a_0)
\end{equation}
of the standard Borel subgroup of $G_{2n}(F_v)$. Restricting all functions in the standard model of this representation to $\Sp_{2n}(F_v)$, we obtain a Borel-induced representation of $\Sp_{2n}(F_v)$ which is denoted by $\chi_1\times\ldots\times\chi_n\rtimes1$.

We also define parabolic induction from $P_{2n}(F_v)$. Let $\chi$ and $\sigma$ be characters of $F_v^\times$. Then $\chi\rtimes\sigma$ is the representation of $G_{2n}(F_v)$ parabolically induced from the character
\begin{equation}\label{GnSiegelinducedeq}
 \mat{A}{*}{}{u\,^t\!A^{-1}}\longmapsto\sigma(u)\chi(\det(A))
\end{equation}
of $P_{2n}(F_v)$. The center of $G_{2n}(F_v)$ acts on $\chi\rtimes\sigma$ via the character $\chi^n\sigma^2$. Restricting the functions in the standard model of $\chi\rtimes\sigma$ to $\Sp_{2n}(F_v)$, we obtain the Siegel-induced representation of $\Sp_{2n}(F_v)$ denoted by $\chi\rtimes1$.

We fix a Haar measure on $\Sp_{2n}(\R)$, as follows. Let $\H_n$ be the Siegel upper half space of degree $n$, consisting of all complex, symmetric $n\times n$ matrices $X+iY$ with $X,Y$ real and $Y$ positive definite. The group $\Sp_{2n}(\R)$ acts transitively on $\H_n$ in the usual way. The stabilizer of the point $I:=i1_n\in\H_n$ is the maximal compact subgroup $K=\Sp_{2n}(\R)\cap{\rm O}(2n)\cong U(n)$. We transfer the classical $\Sp_{2n}(\R)$-invariant measure on $\H_n$ to a left-invariant measure on $\Sp_{2n}(\R)/K$. We also fix a Haar measure on $K$ so that $K$ has volume $1$. The measures on $\Sp_{2n}(\R)/K$ and $K$ define a unique Haar measure on $\Sp_{2n}(\R)$. Let $F$ be a measurable function on $\Sp_{2n}(\R)$ that is right $K$-invariant. Let $f$ be the corresponding function on $\H_n$, i.e., $F(g)=f(gI)$. Then, by these definitions,
\begin{equation}\label{Ghaarpropeq1}
 \int\limits_{\Sp_{2n}(\R)}F(h)\,dh=\int\limits_{\H_n}f(Z)\det(Y)^{-(n+1)}\,dX\,dY.
\end{equation}
We shall always use the Haar measure on $\Sp_{2n}(\R)$ characterized by the property \eqref{Ghaarpropeq1}.

Haar measures on $\Sp_{2n}(F)$, where $F$ is a non-archimedean field with ring of integers $\OF$, will be fixed by requiring that the open-compact subgroup $\Sp_{2n}(\OF)$ has volume $1$. The Haar measure on an adelic group $\Sp_{2n}(\A)$ will always be taken to be the product measure of all the local measures. (At the complex places, the precise choice of measure will be irrelevant for this work, so one can make an arbitrary choice.)

\subsection{Some coset decompositions}
For $0 \le r \le n$, let $\alpha_r \in \Sp_{4n}(\Q)$ be the matrix
\begin{equation}\label{alphardefeq}
 \alpha_r = \left[\begin{smallmatrix}I_n &0&0&0 \\ 0& I_n&0&0\\ 0&\widetilde{I_r}&I_n&0\\ \widetilde{I_r}&0&0&I_n \end{smallmatrix}\right],
\end{equation}
where the $n\times n$ matrix $\widetilde{I_r}$ is given by  $\widetilde{I_r}= \mat{0_{n-r}}{0}{0}{I_r}$.

\begin{proposition}\label{doublecosetdec}
 We have the double coset decomposition
 $$
  G_{4n}(F) = \bigsqcup_{r=0}^n  P_{4n}(F)\alpha_r H_{2n,2n}(F).
 $$
\end{proposition}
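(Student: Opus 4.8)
The plan is to recast the statement in terms of the symplectic geometry of $W := F^{4n}$ equipped with the form $J_{2n} = \mat{}{I_{2n}}{-I_{2n}}{}$. Let $W_1$ be the span of the standard basis vectors lying in coordinate blocks $1$ and $3$, and $W_2$ the span of those in blocks $2$ and $4$; then $W = W_1 \perp W_2$ is an orthogonal sum of two nondegenerate $2n$-dimensional symplectic subspaces, and a direct inspection of the embedding \eqref{embedding-defn} shows that $H_{2n,2n}(F)$ acts on $W$ preserving the ordered pair $(W_1,W_2)$, with each factor of $\Sp_{2n}(F)\times\Sp_{2n}(F)$ realizing the full symplectic group of the corresponding $W_i$. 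On the other hand $P_{4n}(F)$ is the stabilizer of the Lagrangian $X_0 := Fe_1 + \cdots + Fe_{2n}$, and by Witt's theorem already $\Sp_{4n}(F)$ acts transitively on the set $\mathcal L$ of all Lagrangian subspaces of $W$. Hence $P_{4n}(F)g \mapsto g^{-1}X_0$ is a bijection $P_{4n}(F)\backslash G_{4n}(F) \to \mathcal L$ carrying right translation to the natural action on $\mathcal L$, so that $P_{4n}(F)\backslash G_{4n}(F)/H_{2n,2n}(F)$ is in bijection with the set of $H_{2n,2n}(F)$-orbits on $\mathcal L$. The proposition thus amounts to classifying those orbits.

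For $\ell\in\mathcal L$ put $d(\ell) := \dim_F(\ell\cap W_1)$. I would first record two facts: $d(\ell)$ equals $\dim_F(\ell\cap W_2)$, and $0 \le d(\ell)\le n$. The second is clear since $\ell\cap W_1$ is isotropic in the $2n$-dimensional symplectic space $W_1$; for the first, if $p_1\colon\ell\to W_1$ is the projection then $\ker p_1 = \ell\cap W_2$, while a short computation with the form shows that the orthogonal complement of $p_1(\ell)$ inside $W_1$ is exactly $\ell\cap W_1$, and comparing dimensions in the nondegenerate space $W_1$ yields the claim. Since $H_{2n,2n}(F)$ preserves $W_1$ and $W_2$, the invariant $d$ is constant on orbits. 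Next I would compute $d$ on the given representatives: $\alpha_r$ is block-unipotent, hence lies in $\Sp_{4n}(F)$, and $\alpha_r^{-1}X_0 = \{(x,\,y,\,-\widetilde{I_r}y,\,-\widetilde{I_r}x)^t : x,y\in F^n\}$, from which $d(\alpha_r^{-1}X_0) = n-r$. In particular the $n+1$ double cosets $P_{4n}(F)\alpha_r H_{2n,2n}(F)$ are pairwise distinct, establishing the disjointness of the union.

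It remains to show that $d$ is a complete invariant, i.e.\ that each value $0\le d\le n$ is attained on a single $H_{2n,2n}(F)$-orbit; this is the heart of the matter. Given $\ell$ with $d(\ell)=d$, set $U_i := \ell\cap W_i$ and let $\overline{W_i} := (U_i^\perp\cap W_i)/U_i$ be the corresponding quotient symplectic space, of dimension $2(n-d)$. Using the subgroup $\Sp(W_1)\times\Sp(W_2)$ of $H_{2n,2n}(F)$ I would first move $U_1$ and $U_2$ to fixed standard isotropic subspaces. One checks that $U_1\oplus U_2\subseteq \ell\subseteq (U_1\oplus U_2)^\perp = (U_1^\perp\cap W_1)\oplus(U_2^\perp\cap W_2)$, so $\ell$ descends to a subspace $\bar\ell\subseteq \overline{W_1}\oplus\overline{W_2}$; a further check shows $\bar\ell$ is a Lagrangian meeting each factor trivially, hence is the graph of a linear isomorphism $\overline{W_1}\to\overline{W_2}$ which the isotropy of $\bar\ell$ forces to be an anti-isometry. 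Any two anti-isometries between the two quotient spaces differ by precomposition with an element of $\Sp(\overline{W_1})$, so they form a single $\Sp(\overline{W_1})\times\Sp(\overline{W_2})$-orbit; and by Witt's theorem the symplectic transformations of $\overline{W_1}$ and $\overline{W_2}$ needed to standardize $\bar\ell$ lift to elements of $\Sp(W_1)$ and $\Sp(W_2)$ fixing $U_1$ and $U_2$. Composing all these moves sends $\ell$ to a Lagrangian depending only on $d$, which by the disjointness already proved must lie in the $H_{2n,2n}(F)$-orbit of $\alpha_{n-d}^{-1}X_0$. This completes the classification, hence the proof.

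The main obstacle is the descent-and-lift argument of the last paragraph: verifying carefully that $\bar\ell$ is a Lagrangian transverse to both $\overline{W_1}$ and $\overline{W_2}$, and that the normal form achieved in the quotient can be pulled back through the action of the Siegel-type parabolic of $\Sp(W_i)$ stabilizing $U_i$. Everything else is bookkeeping, though attention is needed with the sign conventions coming from $J_{2n}$ and from the embedding \eqref{embedding-defn}, which are exactly what fix the indexing $r \leftrightarrow n-r$.
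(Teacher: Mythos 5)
Your argument is correct and complete, but it is genuinely different from what the paper does: the paper offers no proof at all beyond the citation ``This follows from Shimura~\cite[Prop.\ 2.4]{shibook1}.'' What you have written out is essentially the intrinsic geometric proof underlying that reference: the identification $P_{4n}(F)\backslash G_{4n}(F)\cong\mathcal L$ (valid for the similitude group since $P_{4n}$ is the full stabilizer of $X_0$ in $G_{4n}(F)$ and $\Sp_{4n}(F)$ is already transitive on Lagrangians), the observation that all of $H_{2n,2n}(F)$ preserves the splitting $W=W_1\perp W_2$ so that $d(\ell)=\dim(\ell\cap W_1)$ descends to an invariant of double cosets, the computation $d(\alpha_r^{-1}X_0)=n-r$ giving disjointness, and the graph-of-an-anti-isometry argument in the quotient symplectic spaces giving completeness of the invariant. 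All the individual verifications check out (in particular $\dim(\ell\cap W_1)=\dim(\ell\cap W_2)$ via $p_1(\ell)^{\perp}\cap W_1=\ell\cap W_1$, the transversality of $\bar\ell$ to each $\overline{W_i}$, and the lifting of $\Sp(\overline{W_i})$ through the parabolic of $\Sp(W_i)$ stabilizing $U_i$). Shimura's own proof is a more matrix-theoretic induction; your version buys a self-contained and conceptually transparent argument at the cost of length, and it correctly isolates the one point that needs real care, namely the descent-and-lift step. The only thing I would make explicit is the reduction across the similitude factor for the covering statement: you get surjectivity of the union from $G_{4n}(F)=P_{4n}(F)\,\Sp_{4n}(F)$ together with the fact that your normal forms exhaust the $\Sp_{2n}\times\Sp_{2n}$-orbits on $\mathcal L$; disjointness for the full $H_{2n,2n}(F)$ is already covered since $d$ is invariant under the whole group.
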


\begin{proof}
This follows from Shimura~\cite[Prop.\ 2.4]{shibook1}.
\end{proof}

For our purposes, it is nicer to work with the coset representatives $Q_r := \alpha_r \cdot (I_{4n-2r}, J_r)$ where $J_r = \mat{}{I_r}{-I_r}{}$. It is not hard to see that $(I_{4n-2r}, J_r) $ is actually an element of $H_{2n,2n} $, so that
$$
 P_{4n}(F)\alpha_r H_{2n,2n}(F) =  P_{4n}(F)Q_r H_{2n,2n}(F).
$$
One can write down the matrix $Q_r$ explicitly as follows,
\begin{equation}\label{Qrdefeq}
 Q_r = \left[\begin{smallmatrix}I_n &0&0&0 \\ 0& I'_{n-r}&0&\widetilde{I_r}\\ 0&0&I_n&\widetilde{I_r}\\ \widetilde{I_r}&-\widetilde{I_r}&0&I'_{n-r} \end{smallmatrix}\right],
\end{equation}
where $I'_{n-r}=I_n-\tilde I_r=\mat{I_{n-r}}{0}{0}{0}$.

For $0 \le r \le n$, let $P_{2n, r}$ be the maximal parabolic subgroup (proper if $r \neq n$) of $G_{2n}$ consisting of matrices whose lower-left $(n+r) \times (n-r)$ block is zero. Its Levi component is isomorphic to $\GL_{n-r}\times G_r$. Note that $P_{2n,0} = P_{2n}$ and $P_{2n,n} = G_{2n}$. Let $N_{2n,r}$ denote the unipotent radical of $P_{2n, r}$.

The next lemma expresses the reason why $\{Q_r\}$ is more convenient than  $\{\alpha_r\}$ for the double coset representatives. Let $d:P_{4n}\to\GL_1$ be the homomorphism defined by $d(p)=v^{-n}\det(A)$ for $p=\mat{A}{}{}{v\,^t\!A^{-1}}n'$ with $v\in\GL_1, A\in\GL_{2n}$ and $n' \in N_{4n,0}$. Note that $\delta_{P_{4n}}(p)=|d(p)|^{2n+1}$ for $p\in P_{4n}(\A)$.
\begin{lemma}\label{lemmaqrprop}
 \begin{enumerate}
  \item Let $0 \leq r < n$ and suppose that
   $$
    p_1 =   \left[\begin{smallmatrix}g_1\\&I_r&&&\\&&\,^tg_1^{-1}\\&&&&I_r\end{smallmatrix}\right]\cdot n_1,\quad  p_2 =   \left[\begin{smallmatrix}g_2\\&I_r&&&\\&&\,^tg_2^{-1}\\&&&&I_r\end{smallmatrix}\right]\cdot n_2, \quad g_1, g_2\in\GL_{n-r}, \quad n_1, n_2\in N_{2n,r}$$      are certain elements of $P_{2n, r}$. Then the matrix $X = Q_r(p_1,p_2) Q_r^{-1}$ lies in $P_{4n}$ and satisfies $d(X)=\det(g_1)\det(g_2)$.
  \item  Let $0 \leq r < n$. Let $x_1\in\Sp_{2r}$ and $x=(1,x_1)\in\Sp_{2n}$. Then the matrix $X = Q_r(x,x) Q_r^{-1}$ lies in $P_{4n}$ and satisfies $d(X) = 1$.
  \item Let $g \in G_{2n}$. Then the matrix $X = Q_n(g,g) Q_n^{-1}$ lies in $P_{4n}$ and satisfies $d(X) = 1$.
\end{enumerate}
\end{lemma}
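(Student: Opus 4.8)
The plan is to verify all three claims by direct matrix computation using the explicit form \eqref{Qrdefeq} of $Q_r$, together with the explicit embedding \eqref{embedding-defn} of $H_{2n,2n}$ into $G_{4n}$. In each part one takes the element of $H_{2n,2n}$ in question, writes it out as a $4n \times 4n$ matrix via \eqref{embedding-defn}, conjugates by $Q_r$, and checks (a) that the resulting matrix $X$ has lower-left $2n \times 2n$ block equal to zero, so that $X \in P_{4n}$, and (b) that the ``top-left $\GL_{2n}$ part'' of $X$ — modulo the unipotent radical $N_{4n,0}$ — has the asserted value of $d(X)$. Since $d(p) = v^{-n}\det(A)$ depends only on the coset $pN_{4n,0}$, and since the diagonal elements we are conjugating all have multiplier $1$ so that $v = 1$ throughout, computing $d(X)$ amounts to computing $\det$ of the top-left $2n\times 2n$ block of $X$ after clearing the $N_{4n,0}$-part. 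A cleaner route: it suffices to exhibit, for each case, an explicit factorization $X = \mat{A}{*}{}{\,^tA^{-1}}\,n'$ and read off $\det(A)$.

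For part (iii), the relevant statement is essentially that $Q_n$ intertwines the diagonal $G_{2n}$ inside $H_{2n,2n}$ with a subgroup of the Siegel parabolic $P_{4n}$, with trivial $d$-value. Here $\widetilde{I_n} = I_n$ and $I'_0 = 0$, so $Q_n$ simplifies considerably; one checks that $Q_n (g,g) Q_n^{-1}$ is block upper triangular for every $g \in G_{2n}$ and that its top-left block is a conjugate of $g$ by a fixed matrix, hence has determinant $\mu_n(g)^n$, and since this must be divided by $v^n = \mu_n(g)^n$ (the multiplier of $(g,g)$ in $G_{4n}$ under \eqref{embedding-defn} is $\mu_n(g)$), we get $d(X) = 1$. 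This is the ``diagonal orbit is open in the Siegel parabolic'' phenomenon underlying the doubling method, and part (iii) is really the clean model case; I would do it first.

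For parts (i) and (ii) with $0 \le r < n$, the matrices $p_i$ and $x$ are supported in the block structure adapted to $P_{2n,r}$, i.e., they act on the first $n-r$ coordinates (and their symplectic duals) as $\GL_{n-r}$ or leave a $\Sp_{2r}$-block untouched, modulo unipotent. The key geometric fact is that $Q_r$ was built precisely so that the $\Sp_{2r} \times \Sp_{2r}$-diagonal sitting inside $H_{2n,2n}$ (acting on the ``last $r$'' symplectic coordinates of each factor) gets sent into $P_{4n}$ via the same mechanism as in part (iii) — this gives the $d(X) = 1$ in part (ii) — while the $\GL_{n-r}$-parts get sent to a block in the Levi of $P_{4n}$ whose determinant is visibly $\det(g_1)\det(g_2)$, giving part (i). Concretely I would: (1) reduce to generators, handling the Levi part ($n_i = 1$) and the unipotent part ($g_i = 1$) of $p_i$ separately, since $d$ is a homomorphism and $N_{4n,0}$ is normal in $P_{4n}$; (2) for the Levi part, note $Q_r$ conjugates $\mathrm{diag}(g_1,g_2,\ldots)$ into $\mathrm{diag}$-plus-unipotent, with the $\GL$-blocks appearing undisturbed; (3) for the unipotent part $n_i \in N_{2n,r}$, check that $Q_r(n_1,n_2)Q_r^{-1}$ lands in $N_{4n,0}$ — this is the content that makes ``$X \in P_{4n}$'' work and forces no contribution to $d$.

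The main obstacle is purely bookkeeping: $Q_r$ and $Q_r^{-1}$ are $4n \times 4n$ matrices in a nonstandard block ordering (the four $n\times n$ rows/columns are permuted relative to the usual symplectic form $J_{2n}$), and the embedding \eqref{embedding-defn} itself reshuffles blocks with sign changes, so the danger is sign errors and mislabeled blocks when one multiplies $Q_r \cdot (\text{embedded element}) \cdot Q_r^{-1}$. I would mitigate this by first computing $Q_r^{-1}$ explicitly (it has a form similar to \eqref{Qrdefeq} with the signs of the $\widetilde{I_r}$ entries flipped, which one verifies in one line), and by checking the claimed values of $d(X)$ against the known outcome for $r = n$ (part (iii)) and for $r = 0$ (where $P_{2n,0} = P_{2n}$, $N_{2n,0} = N_{2n}$, and part (i) should just recover the compatibility of $d$ with the Siegel Levi). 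No genuinely hard step is expected — this lemma is a computational preparation for the unfolding of the Eisenstein series in the Basic Identity — but the parity/parity-of-signs consistency check at $r=n$ and $r=0$ is what I would rely on to catch mistakes.
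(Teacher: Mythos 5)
Your approach is exactly the paper's: the paper's entire proof of this lemma is ``This follows by direct verification,'' and your plan (conjugate the explicitly embedded element by $Q_r$, check the lower-left $2n\times 2n$ block vanishes, split off the Levi and unipotent parts to read off $d(X)$, and use part (iii) and $r=0$ as consistency checks) is the right way to carry that verification out. One small correction: $Q_r^{-1}$ is \emph{not} obtained from $Q_r$ by flipping the signs of the $\widetilde{I_r}$ entries --- that describes $\alpha_r^{-1}$ only, whereas $Q_r^{-1}=(I_{4n-2r},J_r)^{-1}\alpha_r^{-1}$ (e.g.\ for $r=n$ one finds $Q_n^{-1}=\left[\begin{smallmatrix}I&0&0&0\\I&0&0&-I\\0&-I&I&0\\0&I&0&0\end{smallmatrix}\right]$, which is not of the claimed form); your own proposed one-line check would catch this, and it does not affect the viability of the argument.
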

\begin{proof}
This follows by direct verification.
\end{proof}

Next we provide a set of coset representatives for $P_{4n}(F) \bs  P_{4n}(F)Q_r H_{2n,2n}(F)$.

\begin{proposition}\label{rightcosetdec}
 For each $0 \le r \le n$, we have the coset decomposition
 $$
  P_{4n}(F)Q_r H_{2n,2n}(F) = \bigsqcup_{\substack{x = (1, x_1) \in\Sp_{2n}(F)\\ x_1 \in \Sp_{2r}(F) }}\;\bigsqcup_{\substack{y \in P_{2n, r}(F) \bs G_{2n}(F) \\ z \in P_{2n, r}(F) \bs G_{2n}(F)}} P_{4n}(F) \cdot Q_r \cdot (xy, z),
 $$
 where we choose the representatives $y,z$ to be in $\Sp_{2n}(F)$.
\end{proposition}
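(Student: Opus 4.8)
The plan is to collapse the problem to a single quotient. Translating on the left, the map $h\mapsto P_{4n}(F)Q_r h$ is a surjection from $H_{2n,2n}(F)$ onto $P_{4n}(F)\backslash P_{4n}(F)Q_r H_{2n,2n}(F)$, and two elements $h,h'$ have the same image exactly when $h'h^{-1}$ lies in $\Gamma_r:=Q_r^{-1}P_{4n}(F)Q_r\cap H_{2n,2n}(F)$. Hence $P_{4n}(F)\backslash P_{4n}(F)Q_r H_{2n,2n}(F)$ is in bijection with $\Gamma_r\backslash H_{2n,2n}(F)$, and the whole statement reduces to (a) computing $\Gamma_r$ explicitly and (b) writing down a set of representatives for $\Gamma_r\backslash H_{2n,2n}(F)$.

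For (a), let $\mathrm{pr}\colon P_{2n,r}\to\GSp_{2r}$ be the homomorphism that kills $N_{2n,r}$ and the $\GL_{n-r}$-factor of the Levi $\GL_{n-r}\times\GSp_{2r}$. I claim that for $0\le r<n$,
\[
\Gamma_r=\{(h_1,h_2)\in H_{2n,2n}(F)\ :\ h_1,h_2\in P_{2n,r}(F),\ \mathrm{pr}(h_1)=\mathrm{pr}(h_2)\},
\]
while $\Gamma_n=\Delta(G_{2n}(F))$ is the diagonal. For the inclusion ``$\supseteq$'' I would assemble the right-hand side from elements already known to lie in $\Gamma_r$: Lemma~\ref{lemmaqrprop}(i) shows $\ker(\mathrm{pr})(F)\times\ker(\mathrm{pr})(F)\subseteq\Gamma_r$ (its elements are precisely those $p=\mathrm{diag}(g,I_r,{}^tg^{-1},I_r)n$ appearing there); Lemma~\ref{lemmaqrprop}(ii) gives $\Delta((1,x_1))\in\Gamma_r$ for $x_1\in\Sp_{2r}(F)$; and a short direct computation shows that $\Delta$ of the Siegel--Levi similitude element $\mathrm{diag}(I_n,vI_n)$ of $G_{2n}$ — which maps under \eqref{embedding-defn} to $\mathrm{diag}(I_{2n},vI_{2n})$ — is conjugated by $Q_r$ into the Siegel Levi of $G_{4n}$, hence into $P_{4n}$. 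Since $\GSp_{2r}$ is generated by $\Sp_{2r}$ and its similitude element and $P_{2n,r}=\ker(\mathrm{pr})\cdot(\text{the }\GSp_{2r}\text{-factor})$, these families generate the stated group; for $r=n$ one uses Lemma~\ref{lemmaqrprop}(iii) directly. For ``$\subseteq$'' I would compute: write a general $(h_1,h_2)\in H_{2n,2n}(F)$ in $4\times4$ block form via \eqref{embedding-defn}, conjugate by $Q_r$ using \eqref{Qrdefeq} and the (easily computed) inverse of $Q_r$, and impose that the lower-left $2n\times2n$ block of $Q_r(h_1,h_2)Q_r^{-1}$ vanish; the resulting equations on the blocks of $h_1,h_2$ are exactly $h_1,h_2\in P_{2n,r}(F)$ together with $\mathrm{pr}(h_1)=\mathrm{pr}(h_2)$.

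For (b), use $G_{2n}(F)=P_{2n,r}(F)\Sp_{2n}(F)$ to fix, once and for all, a representative in $\Sp_{2n}(F)$ for each coset in $P_{2n,r}(F)\backslash G_{2n}(F)$. Given $(h_1,h_2)\in H_{2n,2n}(F)$, let $z,y\in\Sp_{2n}(F)$ be the chosen representatives of $P_{2n,r}(F)h_2$ and $P_{2n,r}(F)h_1$; then $h_2z^{-1},h_1y^{-1}\in P_{2n,r}(F)$, with $\GSp_{2r}$-parts $g_2,g_1$, and since $\mu_n(h_1)=\mu_n(h_2)$ we get $\mu_r(g_1)=\mu_r(g_2)$, so $x_1:=g_2^{-1}g_1\in\Sp_{2r}(F)$. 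Putting $x=(1,x_1)$, one checks that both components of $(h_1(xy)^{-1},\,h_2z^{-1})$ lie in $P_{2n,r}(F)$ with common $\GSp_{2r}$-part $g_2$, so the pair lies in $\Gamma_r$ and $(h_1,h_2)\in\Gamma_r\cdot(xy,z)$; this shows the asserted family exhausts the double coset. For disjointness, suppose $\Gamma_r(xy,z)=\Gamma_r(x'y',z')$ with $x=(1,x_1)$, $x'=(1,x_1')$ and $y,z,y',z'$ the chosen representatives: comparing second coordinates and using $\Gamma_r\subseteq P_{2n,r}(F)\times P_{2n,r}(F)$ forces $z=z'$ (hence the connecting element of $\Gamma_r$ has trivial second component); comparing first coordinates and using $x,x'\in P_{2n,r}(F)$ forces $y=y'$; and then the connecting element is $((1,x_1x_1'^{-1}),1)$, so applying $\mathrm{pr}$ to its two components gives $x_1x_1'^{-1}=1$, i.e.\ $x_1=x_1'$. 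This yields the decomposition.

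The genuinely laborious point is the inclusion ``$\subseteq$'' in step (a): Lemma~\ref{lemmaqrprop} only exhibits elements that \emph{do} land in $P_{4n}$ after conjugation by $Q_r$, so one must show that $Q_r$ conjugates \emph{nothing else} from $H_{2n,2n}$ into $P_{4n}$. I expect to handle this by the explicit block computation described above — the very sparse shape of $Q_r$ and of $Q_r^{-1}$ keeps it manageable — though it could alternatively be extracted from the orbit/stabilizer analysis underlying Proposition~\ref{doublecosetdec} in Shimura~\cite{shibook1}. A second technical point that must be tracked throughout is the similitude bookkeeping: the reason the parameter $x_1$ ranges over $\Sp_{2r}(F)$ rather than $\GSp_{2r}(F)$ is precisely that the similitudes of $h_1$ and $h_2$ cancel on $H_{2n,2n}$, and this cancellation is what makes the element $x_1=g_2^{-1}g_1$ symplectic in the surjectivity step.
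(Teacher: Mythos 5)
Your argument is correct, and it is genuinely different in character from what the paper does: the paper disposes of this proposition by citing Shimura \cite[Prop.\ 2.7]{shibook1} and only adds the observation that Lemma~\ref{lemmaqrprop}~i), ii) makes the choice of $y,z$ irrelevant, whereas you give a self-contained orbit--stabilizer proof. The crux of your version is the identification of $\Gamma_r=Q_r^{-1}P_{4n}(F)Q_r\cap H_{2n,2n}(F)$, and the description you propose is indeed the right one (it is the standard stabilizer computation underlying the doubling method; it is also exactly what Proposition~\ref{vanishingprop} implicitly relies on). Your ``$\supseteq$'' direction is complete as written: the three families you list do generate the asserted group, since any $(h_1,h_2)$ with $h_i\in P_{2n,r}$ and common $\GSp_{2r}$-projection $g$ factors as $(u_1,u_2)\cdot\Delta((1,g_0))\cdot\Delta(\mathrm{diag}(I_n,vI_n))$ with $u_i\in\ker(\mathrm{pr})$, $g_0\in\Sp_{2r}$, $v=\mu(g)$, and the similitude element is checked directly to be conjugated by $Q_r$ into $P_{4n}$. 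Once $\Gamma_r$ is known, your steps (b) for surjectivity and disjointness are clean and correct, including the similitude bookkeeping that puts $x_1$ in $\Sp_{2r}$. What your approach buys is independence from Shimura's book and a transparent explanation of where the index set $(x_1,y,z)$ comes from; what it costs is the block computation you defer.

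On that deferred computation: it does close up, but the word ``exactly'' in your step (a) deserves one caveat. Writing $(h_1,h_2)$ via \eqref{embedding-defn} and using $Q_r^{-1}=\left[\begin{smallmatrix}I_n&0&0&0\\ \widetilde{I_r}&I'_{n-r}&0&-\widetilde{I_r}\\ 0&-\widetilde{I_r}&I_n&0\\ 0&\widetilde{I_r}&0&I'_{n-r}\end{smallmatrix}\right]$, the vanishing of the lower-left $2n\times2n$ block of $Q_r(h_1,h_2)Q_r^{-1}$ gives four block equations, e.g.\ $C_1=\widetilde{I_r}C_2\widetilde{I_r}$ and $\widetilde{I_r}A_1-\widetilde{I_r}A_2\widetilde{I_r}+I'_{n-r}C_2\widetilde{I_r}=0$. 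These are \emph{not} literally the naive conditions ``$h_i\in P_{2n,r}$ and $\mathrm{pr}(h_1)=\mathrm{pr}(h_2)$'': for instance the first forces $I'_{n-r}C_1\widetilde{I_r}=0$, which is not part of the defining block condition of $P_{2n,r}$ (though it is an automatic consequence of $h_1$ being symplectic and preserving the relevant isotropic subspace). Multiplying the four equations on the left and right by the idempotents $I'_{n-r}$ and $\widetilde{I_r}$ does extract precisely $\widetilde{I_r}A_iI'_{n-r}=0$, $C_iI'_{n-r}=0$ and the equality of the four $r\times r$ corner blocks, so the equivalence holds; but this extraction should be written out rather than asserted, since it is the only place where the inclusion ``$\subseteq$'' — and hence the disjointness of the decomposition — is actually proved.
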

\begin{proof} This follows from Shimura~\cite[Prop.\ 2.7]{shibook1}. Note that the choice of representatives $y$ and $z$ is irrelevant by i) and ii) of Lemma \ref{lemmaqrprop}.
\end{proof}
\section{Eisenstein series and zeta integrals}
\subsection{Degenerate principal series representations}
Let $\chi$ be a character of $F^\times \backslash \A^\times$. We define a character on $P_{4n}(\A)$, also denoted by $\chi$, by $\chi(p) = \chi(d(p))$. For a complex number $s$, let
\begin{equation}\label{ind-repn}
 I(\chi,s) = {\rm Ind}^{G_{4n}(\A)}_{P_{4n}(\A)} ( \chi \delta_{P_{4n}}^s).
\end{equation}
Thus, $f(\,\cdot\,, s) \in I(\chi,s)$ is a smooth complex-valued function satisfying
\begin{equation}\label{ind-repn-fctn}
 f(pg, s)  = \chi(p) \delta_{P_{4n}}(p)^{s + \frac12} f(g,s)
\end{equation}
for all $p \in P_{4n}(\A)$ and $g \in G_{4n}(\A)$. Note that these functions are invariant under the center of $G_{4n}(\A)$. Let $I_v(\chi_v,s)$ be the analogously defined local representation. Using the notation introduced in Sect.~\ref{basicnotdefsec}, we have
\begin{equation}\label{Ichisaltnoteq}
 I_v(\chi_v,s)=\chi_v|\cdot|_v^{(2n+1)s}\,\rtimes\,\chi_v^{-n}|\cdot|_v^{-n(2n+1)s}.
\end{equation}
We have $I(\chi,s)\cong\otimes I_v(\chi_v,s)$ in a natural way. Given $f\in I(\chi,s)$, it follows from iii) of Lemma \ref{lemmaqrprop} that, for all $h\in\Sp_{2n}(\A)$,
\begin{equation}\label{Qninveq1}
 f(Q_n \cdot (gh, g), s)=f(Q_n \cdot (h, 1), s)\qquad\text{for }g\in G_{2n}(\A).
\end{equation}
We will mostly use this observation in the following form. Let $f_v\in I_v(\chi_v,s)$ and $K$ a maximal compact subgroup of $\Sp_{2n}(F_v)$. Then
\begin{equation}\label{Qninveq2}
 f_v(Q_n \cdot (k_1hk_2, 1), s)=f_v(Q_n \cdot (h, 1)(k_2,k_1^{-1}), s)\qquad\text{for }k_1,k_2\in K
\end{equation}
and all $h\in\Sp_{2n}(F_v)$.

In preparation for the next result, and for the unramified calculation in Sect.~\ref{unramifiedcalcsec}, we recall some facts concerning the unramified Hecke algebra at a non-archimedean place $v$ of $F$. We fix a uniformizer $\varpi$ of $F_v$. Let $K=\Sp_{2n}(\OF_v)$. Recall the Cartan decomposition
\begin{align}
 \label{cartaneq2c}\Sp_{2n}(F)&=\bigsqcup_{\substack{e_1,\cdots, e_n\in\Z\\0\leq e_1\leq \cdots \leq e_n}}K_{e_1,\cdots, e_n},
\end{align}
where
\begin{align}
 \label{cartaneq2d}K_{e_1,\cdots,e_n}&=K\,{\rm diag}(\varpi^{e_1}, \cdots, \varpi^{e_n},\varpi^{-e_1},\cdots,\varpi^{-e_n}) K.
\end{align}
Consider the spherical Hecke algebra $\mathcal{H}_n$ consisting of left and right $K$-invariant functions on $\Sp_{2n}(F)$. The structure of this Hecke algebra is described by the Satake isomorphism
\begin{equation}\label{satakeisoeq}
 \mathcal{S}:\:\mathcal{H}_n\longrightarrow\C[X_1^{\pm1}, \cdots, X_n^{\pm1}]^W,
\end{equation}
where the superscript $W$ indicates polynomials that are invariant under the action of the Weyl group of $\Sp_{2n}$. Let $T_{e_1,\ldots,e_n}$ be the characteristic function of the set $K_{e_1,\ldots,e_n}$ defined in (\ref{cartaneq2d}). Then $T_{e_1,\ldots,e_n}$ is an element of the Hecke algebra $\mathcal{H}_n$. The values $\mathcal{S}(T_{e_1,\ldots,e_n})$ are encoded in the \emph{rationality theorem}
\begin{equation}\label{bocherersformulaeq}
 \sum_{\substack{e_1,\ldots,e_n\in\Z\\0\leq e_1\leq\ldots\leq e_n}}\mathcal{S}(T_{e_1,\ldots,e_n})Y^{e_1+\ldots+e_n}=\frac{1-Y}{1-q^nY}\prod_{i=1}^n\frac{1-q^{2i}Y^2}{(1-X_iq^nY)(1-X_i^{-1}q^nY)}.
\end{equation}
This identity of formal power series is the main result of \cite{boch1986}.

Let $\chi_1,\ldots,\chi_n$ be unramified characters of $F_v^\times$. Let $\pi$ be the unramified constituent of $\chi_1\times\ldots\times\chi_n\rtimes1$. The numbers $\alpha_i=\chi_i(\varpi)$, $i=1,\ldots,n$, are called the \emph{Satake parameters} of $\pi$. Let $v_0$ be a spherical vector in $\pi$. It is unique up to scalars. Hence, if we act on $v_0$ by an element $T$ of $\mathcal{H}_n$, we obtain a multiple of $v_0$. This multiple is given by evaluating $\mathcal{S}T$ at the Satake parameters, i.e.,
\begin{equation}\label{Heckesatakeactioneq}
 \int\limits_{\Sp_{2n}(F)}T(x)\pi(x)v_0\,dx=(\mathcal{S}T)(\alpha_1,\ldots,\alpha_n)v_0.
\end{equation}

Now assume that $v$ is a real place. Let $K=\Sp_{2n}(\R)\cap{\rm O}(2n)\cong U(n)$ be the standard maximal compact subgroup of $\Sp_{2n}(\R)$. Let $\mathfrak{g}$ be the (uncomplexified) Lie algebra of $\Sp_{2n}(\R)$, and let $\mathfrak{a}$ be the subalgebra consisting of diagonal matrices. Let $\Sigma$ be the set of restricted roots with respect to $\mathfrak{a}$. If $e_i$ is the linear map sending ${\rm diag}(a_1,\ldots,a_n,-a_1,\ldots,-a_n)$ to $a_i$, then $\Sigma$ consists of all $\pm(e_i-e_j)$ for $1\leq i<j\leq n$ and $\pm(e_i+e_j)$ for $1\leq i\leq j\leq n$. As a positive system we choose
\begin{equation}\label{posrestrictedrootseq}
 \Sigma^+=\{e_j-e_i:1\leq i<j\leq n\}\,\cup\,\{-e_i-e_j:1\leq i\leq j\leq n\}
\end{equation}
(what is more often called a \emph{negative} system). Then the positive Weyl chamber is
\begin{equation}\label{posweylchambereq}
 \mathfrak{a}^+=\{{\rm diag}(a_1,\ldots,a_n,-a_1,\ldots,-a_n):a_1<\ldots<a_n<0\}.
\end{equation}
By Proposition 5.28 of \cite{knapp1986}, or Theorem 5.8 in Sect.~I.5 of \cite{Helgason1984}, we have the integration formula
\begin{equation}\label{KAKintegrationeq}
 \int\limits_{\Sp_{2n}(\R)}\phi(h)\,dh=\alpha_n\int\limits_{K}\int\limits_{\mathfrak{a}^+}\int\limits_{K}\bigg(\prod_{\lambda\in\Sigma^+}\sinh(\lambda(H))\bigg)\phi(k_1\exp(H)k_2)\,dk_1\,dH\,dk_2,
\end{equation}
which we will use for continuous, non-negative functions $\phi$ on $\Sp_{2n}(\R)$. The measure $dH$ in \eqref{KAKintegrationeq} is the Lebesgue measure on $\mathfrak{a}^+\subset\R^n$. The positive constant $\alpha_n$ relates the Haar measure given by the integration on the right hand side to the Haar measure $dh$ on $\Sp_{2n}(\R)$ we fixed once and for all by \eqref{Ghaarpropeq1}. We will calculate $\alpha_n$ explicitly in Appendix \ref{KAKmeasureapp}.
\subsection{Local zeta integrals}\label{convsec}
\begin{lemma}\label{convergencelemma}
 \begin{enumerate}
  \item Let $v$ be any place of $F$ and let $f_v\in I_v(\chi_v,s)$. Then, for ${\rm Re}(s)$ large enough, the function on $\Sp_{2n}(F_v)$ defined by $h\mapsto f_v(Q_n\cdot(h,1),s)$ is in $L^1(\Sp_{2n}(F_v))$.
  \item Let $f\in I(\chi,s)$. Then, for ${\rm Re}(s)$ large enough, the function on $\Sp_{2n}(\A)$ defined by $h\mapsto f(Q_n\cdot(h,1),s)$ is in $L^1(\Sp_{2n}(\A))$.
 \end{enumerate}
\end{lemma}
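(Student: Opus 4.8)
The plan is to prove part (i) first and then deduce part (ii) by a place-by-place argument together with control of the behavior at the good (unramified) places. For part (i) the strategy is to use the double coset decomposition of Proposition \ref{doublecosetdec}, restricted to the local situation. Since $f_v$ is supported on $P_{4n}(F_v)$-cosets and transforms by $\chi_v\delta_{P_{4n}}^{s+1/2}$ on the left, the absolute value $|f_v(Q_n\cdot(h,1),s)|$ is controlled, up to a bounded factor depending on which piece of the Iwasawa/Bruhat decomposition of $(h,1)$ one lands in, by $\delta_{P_{4n}}(p(h))^{\mathrm{Re}(s)+1/2}$ where $p(h)\in P_{4n}(F_v)$ is the projection arising from writing $Q_n\cdot(h,1)$ in the form $p(h)\cdot(\text{compact})$. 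More precisely I would use the Iwasawa decomposition $\Sp_{2n}(F_v)=P_{2n,0}(F_v)K$ together with part iii) of Lemma \ref{lemmaqrprop}: writing $h=m a u k$ with $a$ in the split torus, the quantity $f_v(Q_n\cdot(h,1),s)$ becomes, up to a character, $\delta_{P_{4n}}$ evaluated on an explicit element of $P_{4n}(F_v)$ whose $d$-value is a monomial in the torus coordinates of $a$. Reducing to an integral over the torus and the unipotent radical of the Siegel parabolic of $\Sp_{2n}$, one gets a product of local ``Tate-type'' integrals of the shape $\int |t|^{cs+\text{const}}\,d^\times t$ over regions where $|t|$ is bounded, and these converge for $\mathrm{Re}(s)$ sufficiently large. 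This is essentially the standard argument showing absolute convergence of intertwining/zeta integrals in the doubling method (as in Piatetski-Shapiro--Rallis), adapted to the explicit representative $Q_n$.

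For part (ii), I would factor $f=\otimes_v f_v$ (a finite linear combination of such pure tensors suffices, since every $f\in I(\chi,s)$ is a finite sum of factorizable ones), so that
\begin{equation*}
 \int\limits_{\Sp_{2n}(\A)}|f(Q_n\cdot(h,1),s)|\,dh=\prod_v\int\limits_{\Sp_{2n}(F_v)}|f_v(Q_n\cdot(h_v,1),s)|\,dh_v.
\end{equation*}
By part (i) each factor is finite for $\mathrm{Re}(s)$ large. The remaining point is that for all but finitely many $v$ the section $f_v$ is the normalized spherical vector, $h_v$ ranges over $\Sp_{2n}(F_v)$, and one must show the infinite product converges. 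Here I would invoke the unramified computation: for such $v$, $\int_{\Sp_{2n}(F_v)}|f_v^\circ(Q_n\cdot(h,1),s)|\,dh$ equals (for $\mathrm{Re}(s)$ large, where everything is positive) the value $c_v(\mathrm{Re}(s))$ of \eqref{unramcalceq}, i.e. a ratio of local $L$-factors $L((2n+1)s+1/2,\pi_v\boxtimes\chi_v,\varrho_{2n+1})$ over $L((2n+1)(s+1/2),\chi_v)\prod_i L((2n+1)(2s+1)-2i,\chi_v^2)$. Since the Euler products of all these $L$-functions converge absolutely for $\mathrm{Re}(s)$ sufficiently large (the relevant parameters are bounded by standard bounds towards Ramanujan, which for our purposes can be taken as the trivial convexity bound), the product $\prod_v c_v(\mathrm{Re}(s))$ converges, giving finiteness of the global $L^1$-norm.

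The main obstacle is the uniform bookkeeping at the ``bad'' non-archimedean and archimedean places in part (i): one needs an honest bound on $|f_v(Q_n\cdot(h,1),s)|$ valid on all of $\Sp_{2n}(F_v)$, not just on a dense open set, and one must check that the resulting torus integral really does converge rather than merely being formally a product of convergent pieces -- in particular that the monomial exponents in the torus coordinates produced by $d(Q_n(h,1)Q_n^{-1}\cdot\text{stuff})$ all have the ``right sign'' so that large $\mathrm{Re}(s)$ helps. Once the explicit shape of $d$ on the relevant $P_{4n}$-part is written out (which is a direct matrix computation in the spirit of Lemma \ref{lemmaqrprop}), this becomes a routine comparison with the well-known convergence range of degenerate principal series intertwining integrals, so I expect no genuine difficulty beyond careful computation. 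An alternative, cleaner route for (i) at the archimedean place is to note that $h\mapsto f_v(Q_n\cdot(h,1),s)$ is, up to the character twist, a matrix-coefficient-type majorant dominated by $\Xi$-functions of the degenerate principal series, whose integrability against Haar measure on $\Sp_{2n}(F_v)$ for $\mathrm{Re}(s)$ large is classical; but the direct Iwasawa-coordinate estimate is more self-contained and is what I would write up.
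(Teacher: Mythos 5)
Your overall plan is workable, but it diverges from the paper's proof in a way worth spelling out, and part (ii) contains one concrete misidentification. For part (i) the paper does \emph{not} use the Iwasawa decomposition at the non-archimedean places. It first replaces $|f_v|$ by its two-sided $K\times K$-average $f'$ (using \eqref{Qninveq2}), then integrates over the \emph{Cartan} cells $K_{e_1,\ldots,e_n}$ of \eqref{cartaneq2c}: the value of $f'$ on the cell representative is computed to be $\bigl(|\chi(\varpi)|q^{-(2n+1)(\Re(s)+1/2)}\bigr)^{e_1+\cdots+e_n}$ times a bounded quantity, and the volumes ${\rm vol}(K_{e_1,\ldots,e_n})$ are controlled exactly by applying B\"ocherer's rationality theorem \eqref{bocherersformulaeq} to the trivial representation of $\Sp_{2n}$ (Satake parameters $q^i$), yielding the explicit generating function \eqref{convergencelemmaeq7}. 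This sidesteps entirely the unipotent integral and the sign-of-exponents bookkeeping that you correctly identify as the main obstacle in the Iwasawa route; your route is the classical Piatetski-Shapiro--Rallis convergence argument and can be pushed through, but the Cartan-plus-volume-generating-function argument is cleaner here and, as a bonus, produces exactly the quantitative input needed for part (ii).

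For part (ii), your claim that at an unramified place $\int_{\Sp_{2n}(F_v)}|f_v^\circ(Q_n\cdot(h,1),s)|\,dh$ equals the ratio of $L$-factors in \eqref{unramcalceq} is false: \eqref{unramcalceq} is the value of the zeta integral $\int f_v^\circ(Q_n\cdot(h,1),s)\,\pi_v(h)v_0\,dh$, which involves the operator $\pi_v(h)$, whereas the $L^1$-norm has no $\pi$ in it at all (indeed no $\pi$ appears anywhere in Lemma \ref{convergencelemma}), so invoking bounds toward Ramanujan is a red herring. The correct statement is that the unramified $L^1$-norm equals $\sum_{0\le e_1\le\cdots\le e_n}{\rm vol}(K_{e_1,\ldots,e_n})\,Y^{e_1+\cdots+e_n}$ with $Y=|\chi_v(\varpi)|q^{-(2n+1)(\Re(s)+1/2)}$, which by \eqref{convergencelemmaeq7} is the analogous ratio of local zeta factors attached to the \emph{trivial} representation (Satake parameters $q^i$); this is $1+O(q^{2n-(2n+1)(\Re(s)+1/2)})$, so the product over unramified $v$ converges for $\Re(s)$ large. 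With that substitution your argument for (ii) is correct and is in fact more explicit than the paper's one-line reduction ``ii) follows from i) by definition of the adelic measure,'' which implicitly relies on exactly this uniform estimate at the good places.
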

\begin{proof}
Since ii) follows from i) by definition of the adelic measure, we only have to prove the local statement. To ease notation, we will omit all subindices $v$. Define a function $f'(g,s)$ by
\begin{equation}\label{convergencelemmaeq1}
 f'(g,s)=\int\limits_{K}\int\limits_{K}|f(g(k_2,k_1^{-1}), s)|\,dk_1\,dk_2,\qquad g\in G_{4n}(F).
\end{equation}
From \eqref{ind-repn-fctn}, we see that
\begin{equation}\label{ind-repn-fctn2}
 f'(pg, s)  = |\chi(p) \delta_{P_{4n}}(p)^{s + \frac12}|\,f'(g,s)
\end{equation}
for all $p \in P_{4n}(F)$ and $g \in G_{4n}(F)$. Equation \eqref{Qninveq2} implies that
\begin{equation}\label{convergencelemmaeq2}
 \int\limits_{\Sp_{2n}(F)}|f(Q_n \cdot (h, 1), s)|\,dh=\int\limits_{\Sp_{2n}(F)}|f'(Q_n \cdot (h, 1), s)|\,dh.
\end{equation}
From now on we will assume that $v$ is a non-archimedean place; the proofs for the real and complex places are similar. It follows from \eqref{cartaneq2c} that
\begin{align}\label{convergencelemmaeq3}
 &\int\limits_{\Sp_{2n}(F)}| f'(Q_n \cdot (h, 1), s)|\,dh=\sum_{\substack{e_1,\cdots, e_n\in\Z\\0\leq e_1\leq \cdots \leq e_n}}\;\:\int\limits_{K_{e_1,\cdots, e_n}}|f'(Q_n \cdot (h, 1), s)|\,dh\nonumber\\
 &\hspace{5ex}=\sum_{\substack{e_1,\cdots, e_n\in\Z\\0\leq e_1\leq \cdots \leq e_n}}{\rm vol}(K_{e_1,\cdots, e_n})\:\big|f'(Q_n \cdot ({\rm diag}(\varpi^{e_1}, \cdots, \varpi^{e_n},\varpi^{-e_1},\cdots,\varpi^{-e_n}), 1), s)\big|.
\end{align}
From \eqref{ind-repn-fctn2}, we find
\begin{align}\label{convergencelemmaeq4}
 &f'(Q_n \cdot({\rm diag}(\varpi^{e_1}, \cdots, \varpi^{e_n},\varpi^{-e_1}, \cdots,\varpi^{-e_n}), 1), s)\nonumber\\
 &\hspace{8ex}=\Big(|\chi(\varpi)|q^{-(2n+1)({\rm Re}(s)+1/2)}\Big)^{e_1+\cdots+e_n}f'(\left[\begin{smallmatrix}I_n\\&I_n\\&A&I_n\\A&&&I_n\end{smallmatrix}\right]\left[\begin{smallmatrix}I_n\\&&&I_n\\&&I_n\\&-I_n\end{smallmatrix}\right],s)
\end{align}
with $A={\rm diag}(\varpi^{e_1},\ldots,\varpi^{e_n})$. By smoothness, the term $f'(\ldots)$ in the second line of \eqref{convergencelemmaeq4} takes only finitely many values, and can therefore be estimated by a constant $C$ independent of $e_1,\ldots,e_n$. Thus
\begin{equation}\label{convergencelemmaeq5}
 \int\limits_{\Sp_{2n}(F)}| f'(Q_n \cdot (h, 1), s)|\,dh\leq C\sum_{\substack{e_1,\cdots, e_n\in\Z\\0\leq e_1\leq \cdots \leq e_n}}{\rm vol}(K_{e_1,\cdots, e_n})\:c(s)^{e_1+\cdots+e_n},
\end{equation}
where $c(s)=|\chi(\varpi)|\,q^{-(2n+1)({\rm Re}(s)+1/2)}$. Consider the trivial representation $1_{\Sp_{2n}(F)}$ of $\Sp_{2n}(F)$. Since it is a subrepresentation of $|\cdot|^{-n}\times\ldots\times|\cdot|^{-1}$, its Satake parameters are $\alpha_i=q^i$ for $i=1,\ldots,n$. Let $v_0$ be a spanning vector of $1_{\Sp_{2n}(F)}$. Then
$T_{e_1,\ldots,e_n}v_0={\rm vol}(K_{e_1,\cdots, e_n})v_0.$ By \eqref{Heckesatakeactioneq} it follows that ${\rm vol}(K_{e_1,\cdots, e_n})=\mathcal{S}(T_{e_1,\ldots,e_n})(\alpha_1,\ldots,\alpha_n)$, where $\alpha_i=q^i$. Substituting $\alpha_i=q^i$ into \eqref{bocherersformulaeq}, we get
\begin{equation}\label{convergencelemmaeq7}
 \sum_{\substack{0\leq e_1\leq\ldots\leq e_n}}{\rm vol}(K_{e_1,\cdots, e_n})Y^{e_1+\ldots+e_n}= \frac{1-Y}{1-q^nY} \prod_{i=1}^n\frac{1+q^iY}{1-q^{n+i}Y},
\end{equation}
an identity of formal power series in $Y$. We see that \eqref{convergencelemmaeq5} is convergent if $c(s)$ is sufficiently small, i.e., if ${\rm Re}(s)$ is sufficiently large.
\end{proof}

\begin{proposition}\label{interopprop}
 Let $\chi$ be a character of $F^\times\backslash\A^\times$.
 \begin{enumerate}
  \item For a place $v$ of $F$ let $f_v\in I_v(\chi_v,s)$. Let $(\pi_v,V_v)$ be an admissible, unitary representation of $\Sp_{2n}(F_v)$. If ${\rm Re}(s)$ is sufficiently large, then the integral
   \begin{equation}\label{Rsflocaleq}
    Z_v(s;f_v, w_v):=\int\limits_{\Sp_{2n}(F_v)} f_v(Q_n \cdot (h, 1), s) \pi_v(h)w_v\,dh
   \end{equation}
   converges absolutely to an element of $\bar V_v$, for any $w_v$ in the Hilbert space completion $\bar V_v$. If $w_v$ is in $V_v$, then so is $Z_v(s;f_v, w_v)$.
  \item Let $\pi\cong\otimes\pi_v$ be a cuspidal, automorphic representation of $G_{2n}(\A)$. Let $V$ be the space of automorphic forms on which $\pi$ acts. If ${\rm Re}(s)$ is sufficiently large, then the function
   \begin{equation}\label{interoppropeq2}
    g\longmapsto\int\limits_{\Sp_{2n}(\A)} f(Q_n \cdot (h, 1), s)\phi(gh)\,dh
   \end{equation}
   is an element of $V$, for every $\phi\in V$. If $f=\otimes f_v$ with $f_v\in I_v(\chi_v,s)$, and if $\phi$ corresponds to the pure tensor $\otimes w_v$, then the function \eqref{interoppropeq2} corresponds to the pure tensor $\otimes Z_v(s;f_v, w_v)$.
 \end{enumerate}
\end{proposition}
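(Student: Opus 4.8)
The plan is to obtain the local statement (i) almost directly from the convergence Lemma~\ref{convergencelemma}, and then to bootstrap the global statement (ii) from it by passing to the Hilbert space completion.

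\emph{Part (i).} Since $\pi_v$ is unitary, the integrand of \eqref{Rsflocaleq} (with $\pi_v(h)$ understood as the action on the Hilbert completion $\bar V_v$, which is all that makes sense at an archimedean place) has norm $|f_v(Q_n\cdot(h,1),s)|\,\|w_v\|$; by Lemma~\ref{convergencelemma}(i) the function $h\mapsto f_v(Q_n\cdot(h,1),s)$ lies in $L^1(\Sp_{2n}(F_v))$ once ${\rm Re}(s)$ is large, and $h\mapsto\pi_v(h)w_v$ is continuous, so the Bochner integral converges absolutely in $\bar V_v$. To see that $Z_v(s;f_v,w_v)\in V_v$ when $w_v\in V_v$, fix a maximal compact $K\subset\Sp_{2n}(F_v)$; substituting $h\mapsto k^{-1}h$ and applying \eqref{Qninveq2} gives, for $k\in K$,
$$
 \pi_v(k)\,Z_v(s;f_v,w_v)=\int_{\Sp_{2n}(F_v)}f_v\big(Q_n\cdot(h,1)\cdot(1,k),\,s\big)\,\pi_v(h)w_v\,dh .
$$
If $v$ is non-archimedean, $f_v$ is right invariant under some open-compact subgroup of $\GSp_{4n}(F_v)$, hence under $(1,K')$ for a suitable open-compact $K'\subseteq K$; thus $Z_v(s;f_v,w_v)$ is $K'$-fixed, hence smooth, hence in $V_v$. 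If $v$ is archimedean, then $(1,K)\subseteq K_{4n}$, so the $K_{4n}$-finiteness of $f_v$ forces the translates $f_v(\,\cdot\,(1,k),s)$, $k\in K$, to span a finite-dimensional subspace of $I_v(\chi_v,s)$; hence $Z_v(s;f_v,w_v)$ is $K$-finite, and a $K$-finite vector of the irreducible unitary Hilbert space representation $\bar V_v$ is automatically smooth, so it lies in $V_v$.

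\emph{Part (ii).} Cusp forms are bounded, so the integral in \eqref{interoppropeq2} is dominated by $\|\phi\|_\infty\cdot\|f(Q_n\cdot(\,\cdot\,,1),s)\|_{L^1(\Sp_{2n}(\A))}$, which is finite by Lemma~\ref{convergencelemma}(ii); hence the function \eqref{interoppropeq2}, say $\Psi$, is well-defined and bounded (in particular of moderate growth), and it is left $\GSp_{2n}(F)$-invariant because $\phi$ is. The essential point is to place $\Psi$ inside the space $V$ of $\pi$ and not merely in the ambient space of cusp forms: for this, regard $\Psi=\int_{\Sp_{2n}(\A)}f(Q_n\cdot(h,1),s)\,\pi(h)\phi\,dh$ as a Bochner integral valued in the unitary completion $\bar V$. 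Unitarity of $\pi$ makes the integrand norm-integrable, so this converges in $\bar V$, and a routine verification (pair against smooth vectors, then apply Fubini) identifies its $L^2$-class with the pointwise function $\Psi$. Exactly as in part (i), the identity \eqref{Qninveq2} together with the $K_\infty$-finiteness of $f_\infty$ and the right invariance of $f$ under an open-compact subgroup of $\GSp_{4n}(\A_f)$ shows $\Psi$ is $K$-finite; since a $K$-finite vector of $\bar V$ is smooth and, as recalled in Sect.~\ref{basicnotdefsec}, $V$ is precisely the space of smooth $K$-finite vectors of $\bar V$, we conclude $\Psi\in V$.

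\emph{Factorization and the main obstacle.} When $f=\otimes f_v$ and $\phi$ corresponds to $\otimes w_v$, writing $h=(h_v)$ and using $f(Q_n\cdot(h,1),s)=\prod_vf_v(Q_n\cdot(h_v,1),s)$ and $\pi(h)=\otimes_v\pi_v(h_v)$, Fubini on the restricted direct product $\Sp_{2n}(\A)$ identifies $\Psi$ with $\otimes_vZ_v(s;f_v,w_v)$; this is legitimate because for almost all $v$ the data is unramified and $Z_v(s;f_v,w_v)$ is a scalar multiple of $w_v$ (nonzero for ${\rm Re}(s)$ large), so only finitely many factors are non-trivial after the obvious normalization. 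The step I expect to be the real obstacle is exactly the one handled above, namely identifying $\Psi$ as an element of the specific automorphic realization $V$ of $\pi$. Working in $\bar V$, where the Bochner integral manifestly lives, circumvents it, after which membership in $V$ reduces to smoothness and $K$-finiteness, both formal consequences of \eqref{Qninveq2} and the finiteness properties of the chosen section $f$. All genuine convergence questions, by contrast, have already been disposed of by Lemma~\ref{convergencelemma}.
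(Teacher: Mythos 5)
Your proposal is correct and follows essentially the same route as the paper: absolute convergence via Lemma~\ref{convergencelemma}, interpretation of the integral as a Bochner integral in the Hilbert space completion, and membership in $V_v$ (resp.\ $V$) deduced from \eqref{Qninveq2} together with the invariance/$K$-finiteness of the section, with the factorization obtained from the isomorphism $\pi\cong\otimes\pi_v$. You simply spell out in detail the steps the paper dismisses as ``straightforward.''
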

\begin{proof}
i) The absolute convergence follows form Lemma \ref{convergencelemma} i). The second assertion can be verified in a straightforward way using \eqref{Qninveq2}.

ii) Lemma \ref{convergencelemma} ii) implies that the integral
\begin{equation}\label{interoppropeq3}
 \int\limits_{\Sp_{2n}(\A)}  f(Q_n \cdot(h  ,1), s) (R(h)\phi)\,dh,
\end{equation}
where $R$ denotes right translation, converges absolutely to an element in the Hilbert space completion $\bar V$ of $V$. With the same argument as in the local case we see that this element has the required smoothness properties that make it an automorphic form, thus putting it into $V$. Evaluating at $g$, we obtain the first assertion. The second assertion follows by applying a unitary isomorphism $\pi\cong\otimes\pi_v$ to \eqref{interoppropeq3}.
\end{proof}
\subsection{The basic identity}
Let $I(\chi,s)$ be as in \eqref{ind-repn}, and let $f(\cdot,s)$ be a section whose restriction to the standard maximal compact subgroup of $G_{4n}(\A)$ is independent of $s$. Consider the Eisenstein series on $G_{4n}(\A)$ which, for $\Re(s)>\frac12$, is given by the absolutely convergent series
\begin{equation}\label{Eis-ser-defn}
 E(g,s,f) = \sum\limits_{\gamma \in P_{4n}(F) \backslash G_{4n}(F)} f(\gamma g, s),
\end{equation}
and defined by analytic continuation outside this region. Let $\pi$ be a cuspidal automorphic representation of $G_{2n}(\A)$. Let $V_\pi$ be the space of cuspidal automorphic forms realizing $\pi$. For any automorphic form $\phi $ in $V_\pi$ and any $s\in \C$ define a function $Z(s;f, \phi)$ on $G_{2n}(F) \bs G_{2n}(\A)$ by
\begin{equation}\label{maindefinition}
 Z(s; f, \phi) (g) = \int\limits_{\Sp_{2n}(F) \bs\,g\cdot\Sp_{2n}(\A)} E((
 h  ,g), s, f) \phi(h)\,dh.
\end{equation}

\begin{remark}Note that  $g\cdot\Sp_{2n}(\A) = \{h \in G_{2n}(\A):\mu_n(h) = \mu_n(g)\}.$ \end{remark}

 \begin{remark} $E(g,s,f)$ is slowly increasing away from its poles and $\phi$ is rapidly decreasing. So   $Z(s; f, \phi)(g)$ converges  absolutely for $s \in \C$ away from the poles of the Eisenstein series and defines an automorphic form on $G_{2n}$. We will see soon that $Z(s;f, \phi)$ in fact belongs to $V_\pi$.
  \end{remark}Using Propositions~\ref{doublecosetdec} and~\ref{rightcosetdec}, we conclude that
$
 Z(s; f, \phi)(g) = \sum_{r=0}^n Z_r(s;f, \phi)(g),
$
where
$$
 Z_r(s;f, \phi)(g) = \int\limits_{\Sp_{2n}(F) \bs \, g\cdot\Sp_{2n}(\A)}\,\sum_{\substack{x = (1, x_1) \in \Sp_{2n}(F)\\ x_1 \in \Sp_{2r}(F) }}\;\sum_{\substack{y \in P_{2n, r}(F) \bs G_{2n}(F) \\ z \in P_{2n, r}(F) \bs G_{2n}(F)}}f(Q_r \cdot(xyh,zg), s) \phi(h)\,dh.
$$

\begin{proposition}\label{vanishingprop}Let $0 \le r <n$. Then $Z_r(s;f, \phi)(g) = 0.$
\end{proposition}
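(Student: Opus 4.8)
The plan is to show that the inner summand over $x = (1,x_1)$ produces, after unfolding against the cuspidal form $\phi$, an integral of $\phi$ over a unipotent (or at least non-trivial) subgroup, which then vanishes by cuspidality. More precisely, fix $0 \le r < n$. In the expression for $Z_r(s;f,\phi)(g)$, the crucial observation is that the matrix $Q_r(x,x)Q_r^{-1}$ lies in $P_{4n}(F)$ for every $x = (1,x_1)$ with $x_1 \in \Sp_{2r}(F)$, by part ii) of Lemma~\ref{lemmaqrprop}, and moreover $d(Q_r(x,x)Q_r^{-1}) = 1$, so that $f(Q_r(x,x)h', s) = f(Q_r h', s)$ for all relevant $h'$. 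This lets me rewrite the sum over $x$ inside the integrand: using $f(Q_r(xyh, zg),s) = f(Q_r (x,x)(yh \cdot \text{(something)}, zg), s)$ — one has to be slightly careful, replacing $x$ acting on the left factor by $(x,x)$ acting diagonally and then absorbing — so that $f(Q_r(xyh,zg),s)$ is \emph{independent} of $x$ once we account for the diagonal action, leaving a sum over $x_1 \in \Sp_{2r}(F)$ of a translate of $\phi$.

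Concretely, I would proceed as follows. First, I substitute $h \mapsto x^{-1}h$ (legitimate, since $x \in \Sp_{2n}(F)$ and we are integrating over $\Sp_{2n}(F)\backslash g\Sp_{2n}(\A)$, noting $x$ normalizes nothing problematic and the measure is invariant). After this substitution the factor $f(Q_r(xyh, zg),s)$ becomes $f(Q_r(y h', zg), s)$ where the dependence on $x$ has moved onto $\phi$, turning the inner sum $\sum_{x_1 \in \Sp_{2r}(F)} \phi(h)$ into $\sum_{x_1 \in \Sp_{2r}(F)} \phi(x^{-1} h)$ — wait, this needs the $x$-translation to land on $\phi$, which it does precisely because $d(Q_r(x,x)Q_r^{-1}) = 1$ kills the effect of $x$ on $f$. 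So the inner double sum over $y, z$ and the $f$-factor come out, and what remains inside is $\sum_{x_1 \in \Sp_{2r}(F)} \phi(x_1' h)$ where $x_1' = (1, x_1) \in \Sp_{2n}(F)$.

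Second, I recognize this sum as an integration over the discrete group $\Sp_{2r}(F)$ sitting inside $\Sp_{2n}(F)$, and I want to rearrange the domain of integration: $\Sp_{2n}(F)\backslash g\Sp_{2n}(\A)$, after summing over $\Sp_{2r}(F)$ (embedded as $x \mapsto (1,x)$), collapses to an integral over $\left(\Sp_{2r}(F)\ltimes(\text{rest})\right)\backslash \cdots$ — but more to the point, I can integrate first over the $\Sp_{2r}$-direction, which realizes a constant term / period of $\phi$ along a parabolic of $\Sp_{2n}$. Since $\phi$ is cuspidal, the constant term of $\phi$ along the unipotent radical $N_{2n,r}$ (for $r < n$, this is a proper parabolic, so $N_{2n,r} \ne 1$) vanishes identically. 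The sum over $P_{2n,r}(F)\backslash \Sp_{2n}(F)$ in the $y$-variable, together with the $\Sp_{2r}(F)$-sum, is exactly designed to unfold $f$ along $N_{4n}$ and leave behind the integral $\int_{N_{2n,r}(F)\backslash N_{2n,r}(\A)} \phi(n h)\,dn = 0$. That vanishing gives $Z_r(s;f,\phi)(g) = 0$.

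The main obstacle I anticipate is the bookkeeping in step one: correctly tracking how the variable $x$ (and the $y$, $z$ variables ranging over $P_{2n,r}(F)\backslash \Sp_{2n}(F)$) interacts with the argument $Q_r(xyh, zg)$ of $f$, and showing that the combined sum over $x$ and the coset representatives $y$ (and the integration over $h$) genuinely unfolds to a period integral of $\phi$ over $N_{2n,r}(F)\backslash N_{2n,r}(\A)$ — i.e., that no stray factors from $f$ or the measure obstruct the identification. This is where parts i) and ii) of Lemma~\ref{lemmaqrprop} are essential: part ii) ensures the $x$-action is invisible to $f$, and part i) ensures the $y,z$-action only contributes factors of the form $\chi(\det g_i)|\det g_i|^{\cdots}$ that are harmless (indeed they are why the choice of representatives $y,z$ in $\Sp_{2n}$ is irrelevant, as noted after Proposition~\ref{rightcosetdec}). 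Once the unfolding is set up correctly, the vanishing is immediate from cuspidality of $\phi$, since $0 \le r < n$ forces $P_{2n,r}$ to be a \emph{proper} parabolic subgroup of $G_{2n}$.
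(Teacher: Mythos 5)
Your target is the right one --- the vanishing must come from the constant term $\int_{N_{2n,r}(F)\bs N_{2n,r}(\A)}\phi(nh)\,dn=0$ of the cusp form along the unipotent radical of the proper parabolic $P_{2n,r}$, with Lemma \ref{lemmaqrprop} making $N_{2n,r}$ invisible to $f$ --- but your handling of the sum over $x=(1,x_1)$ contains a genuine error. Lemma \ref{lemmaqrprop} ii) only makes the \emph{diagonal} element $(x,x)$ invisible to $f$; writing $(xyh,zg)=(x,x)(yh,x^{-1}zg)$ therefore trades $x$ in the first slot for $x^{-1}$ in the second slot, i.e.\ $f(Q_r(xyh,zg),s)=f(Q_r(yh,x^{-1}zg),s)$. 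The element $(1,x_1^{-1})$ is not of the form covered by part i) of the lemma (its $\Sp_{2r}$-component is nontrivial), so it cannot be absorbed into the coset representative $z$; if it could, the $x$-sum would reduce to infinitely many copies of the same term and diverge. The alternative you propose, landing the $x$-translation on $\phi$, fails for the same reason: $x\in\Sp_{2n}(F)$ and $\phi$ is left $G_{2n}(F)$-invariant, so $\phi(x^{-1}h)=\phi(h)$ and $\sum_{x_1\in\Sp_{2r}(F)}\phi(x^{-1}h)$ is again a divergent sum of identical terms. Nor does the $x$-sum ``realize a period of $\phi$'': $\Sp_{2r}$ is reductive, not unipotent, and cuspidality says nothing about such sums.

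The correct bookkeeping keeps the $x$-sum inside $f$ throughout. Unfold only the $y$-sum (choosing representatives in $\Sp_{2n}(F)$, so $y\in P'_{2n,r}(F)\bs\Sp_{2n}(F)$ with $P'_{2n,r}=P_{2n,r}\cap\Sp_{2n}$) against the integral over $\Sp_{2n}(F)\bs g\cdot\Sp_{2n}(\A)$; this collapses the domain to $P'_{2n,r}(F)\bs g\cdot\Sp_{2n}(\A)$, which is then factored as an outer integral over $N_{2n,r}(\A)P'_{2n,r}(F)\bs g\cdot\Sp_{2n}(\A)$ and an inner integral over $N_{2n,r}(F)\bs N_{2n,r}(\A)$. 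In the resulting integrand $\sum_{x,z}f(Q_r(xnh,zg),s)\phi(nh)$, the element $x$ lies in the Levi of $P_{2n,r}$ and hence normalizes $N_{2n,r}(\A)$ and $N_{2n,r}(F)$, so $xn=n'x$ with $n'\in N_{2n,r}(\A)$; then Lemma \ref{lemmaqrprop} i) (with trivial $\GL_{n-r}$-part) gives $f(Q_r(n'xh,zg),s)=f(Q_r(xh,zg),s)$. The $f$-factor, still depending on $x$ and $z$ but no longer on $n$, comes out of the inner integral, which then equals $\int_{N_{2n,r}(F)\bs N_{2n,r}(\A)}\phi(nh)\,dn=0$ by cuspidality. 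No disposal of the $x$- or $z$-sums is needed; they simply multiply zero.
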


\begin{proof} Let $N_{2n,r}$ be the unipotent radical of the maximal parabolic subgroup $P_{2n, r}$. Let $P'_{2n, r}=P_{2n, r}\cap\Sp_{2n}$. Then
\begin{align*}
 Z_r(s;f, \phi)(g) &= \int\limits_{\Sp_{2n}(F) \bs \, g\cdot\Sp_{2n}(\A)}\;\sum_{\substack{x = (1, x_1) \in \Sp_{2n}(F)\\ x_1 \in \Sp_{2r}(F) }}\;\sum_{\substack{y \in P'_{2n, r}(F) \bs \Sp_{2n}(F) \\ z \in P_{2n, r}(F) \bs G_{2n}(F)}}f(Q_r \cdot(xy h  ,zg), s) \phi(h)\,dh\\
 &= \int\limits_{P'_{2n, r}(F) \bs \, g\cdot\Sp_{2n}(\A)}\;\sum_{\substack{x = (1, x_1) \in \Sp_{2n}(F)\\ x_1 \in \Sp_{2r}(F) }}\;\sum_{z \in P_{2n, r}(F) \bs G_{2n}(F)}f(Q_r \cdot(xh  ,zg), s) \phi(h)\,dh\\
 &= \int\limits_{N_{2n,r}(\A)P'_{2n, r}(F) \bs \,g\cdot\Sp_{2n}(\A)}\;\int\limits_{N_{2n,r}(F) \bs N_{2n,r}(\A)}  \sum_{x,z}f(Q_r \cdot(xnh  ,zg), s) \phi(nh)\,dn\,dh.
\end{align*}
Since the element $x$ normalizes $N_{2n,r}(\A)$ and $N_{2n,r}(F)$, we can commute $x$ and $n$. Then $n$ can be omitted by i) of Lemma \ref{lemmaqrprop}. Hence
$$
 Z_r(s;f, \phi)(g)= \int\limits_{N_{2n,r}(\A)P'_{2n, r}(F) \bs \,g\cdot\Sp_{2n}(\A)} \;\int\limits_{N_{2n,r}(F) \bs N_{2n,r}(\A)}\bigg(\sum_{x,z}f(Q_r \cdot(xh  ,zg), s)\bigg)\phi(nh)\,dn\,dh,
$$
and the cuspidality of $\phi$ implies that this is zero.
\end{proof}

For the following theorem, which is the main result of this section, recall the local integrals defined in \eqref{Rsflocaleq}.

\begin{theorem}[Basic identity]\label{basicidentity}
 Let $\phi \in V_\pi$ be a cusp form which corresponds to a pure tensor $\otimes_v \phi_v$ via the isomorphism $\pi\cong\otimes\pi_v$. Assume that $f\in I(\chi,s)$ factors as $\otimes f_v$ with $f_v\in I(\chi_v,s)$. Let the function $Z(s;f, \phi)$ on $G_{2n}(F)\bs G_{2n}(\A)$ be defined as in~\eqref{maindefinition}. Then $Z(s;f, \phi)$ also belongs to $V_\pi$ and corresponds to the pure tensor $\otimes_v Z_v(s;f_v, \phi_v)$.
\end{theorem}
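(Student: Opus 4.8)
The plan is to combine the coset decomposition computations already done (Propositions~\ref{doublecosetdec}, \ref{rightcosetdec}, \ref{vanishingprop}) with the convergence and interpretation result of Proposition~\ref{interopprop}. By Proposition~\ref{vanishingprop}, only the term $r=n$ survives, so $Z(s;f,\phi) = Z_n(s;f,\phi)$. For $r=n$ we have $P_{2n,n} = G_{2n}$, so the double sum over $y,z \in P_{2n,r}(F)\bs G_{2n}(F)$ collapses to a single term, and the inner sum over $x = (1,x_1)$ with $x_1 \in \Sp_{2n}(F)$ is a sum over $\Sp_{2n}(F)$ itself. Thus
\begin{equation*}
 Z_n(s;f,\phi)(g) = \int\limits_{\Sp_{2n}(F)\bs\, g\cdot\Sp_{2n}(\A)}\;\sum_{x \in \Sp_{2n}(F)} f(Q_n\cdot(xh, g), s)\,\phi(h)\,dh.
\end{equation*}
The summation over $x \in \Sp_{2n}(F)$ unfolds the quotient $\Sp_{2n}(F)\bs g\cdot\Sp_{2n}(\A)$ to the full coset $g\cdot\Sp_{2n}(\A)$, so — justified by the absolute convergence coming from Lemma~\ref{convergencelemma}, which allows interchanging sum and integral — we obtain
\begin{equation*}
 Z_n(s;f,\phi)(g) = \int\limits_{g\cdot\Sp_{2n}(\A)} f(Q_n\cdot(h,g), s)\,\phi(h)\,dh.
\end{equation*}

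\textbf{Change of variables and the key identity \eqref{Qninveq1}.} Next I would substitute $h = g h'$ with $h' \in \Sp_{2n}(\A)$ (the map $\Sp_{2n}(\A)\to g\cdot\Sp_{2n}(\A)$, $h'\mapsto gh'$ being a measure-preserving bijection since $\mu_n(g)$ is fixed). This gives
\begin{equation*}
 Z_n(s;f,\phi)(g) = \int\limits_{\Sp_{2n}(\A)} f(Q_n\cdot(gh', g), s)\,\phi(gh')\,dh'.
\end{equation*}
Now apply \eqref{Qninveq1}, which says precisely that $f(Q_n\cdot(gh', g), s) = f(Q_n\cdot(h', 1), s)$; this is where part iii) of Lemma~\ref{lemmaqrprop} enters (it is the reason $Q_n$ rather than $\alpha_n$ is used). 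Hence
\begin{equation*}
 Z_n(s;f,\phi)(g) = \int\limits_{\Sp_{2n}(\A)} f(Q_n\cdot(h',1), s)\,\phi(gh')\,dh' = \int\limits_{\Sp_{2n}(\A)} f(Q_n\cdot(h',1), s)\,(R(h')\phi)(g)\,dh',
\end{equation*}
which is exactly the integral \eqref{interoppropeq2} appearing in Proposition~\ref{interopprop} ii). That proposition then directly yields both remaining claims: the function $Z(s;f,\phi)$ lies in $V_\pi$ (for $\Re(s)$ large, and then everywhere by meromorphic continuation), and, under the isomorphism $\pi\cong\otimes\pi_v$, it corresponds to the pure tensor $\otimes_v Z_v(s;f_v,\phi_v)$ where $Z_v$ is as in \eqref{Rsflocaleq}. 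Since $f = \otimes f_v$ and $\phi \leftrightarrow \otimes w_v$ (taking $w_v = \phi_v$), the factorization is immediate from the second assertion of Proposition~\ref{interopprop} ii).

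\textbf{Main obstacle.} The only genuinely delicate point is the unfolding step: moving from the integral over $\Sp_{2n}(F)\bs g\cdot\Sp_{2n}(\A)$ with a sum over $\Sp_{2n}(F)$ inside, to a single integral over $g\cdot\Sp_{2n}(\A)$, and then rewriting it via the substitution $h = gh'$. This requires knowing that the double sum $\sum_{\gamma\in P_{4n}(F)\bs G_{4n}(F)}$ defining the Eisenstein series, once restricted via the coset analysis to the $r=n$ piece, is absolutely convergent together with the outer integral — so that Fubini applies and the unfolding is legitimate. This is furnished by Lemma~\ref{convergencelemma} for $\Re(s)$ sufficiently large; outside that range the identity $Z(s;f,\phi)\in V_\pi$ persists by analytic continuation of the Eisenstein series (and the fact that $V_\pi$ is preserved under the relevant limiting process, as already noted in the remark following \eqref{maindefinition}). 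Everything else is bookkeeping with the coset representatives that has effectively been set up in the preceding propositions.
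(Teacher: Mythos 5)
Your proposal is correct and follows essentially the same route as the paper: reduce to the $r=n$ term via Proposition~\ref{vanishingprop}, unfold the sum over $\Sp_{2n}(F)$ against the quotient, apply \eqref{Qninveq1} to reduce to the integral \eqref{interoppropeq2}, and conclude by Proposition~\ref{interopprop}~ii). Your additional remarks on justifying the unfolding by Lemma~\ref{convergencelemma} and on analytic continuation are correct and merely make explicit what the paper leaves implicit.
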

\begin{proof}
By Proposition~\ref{vanishingprop}, we have that $Z(s;f, \phi)(g) = Z_n(s;f, \phi)(g).$  But
\begin{align*}
 Z_n(s;f, \phi)(g) &= \int\limits_{\Sp_{2n}(F) \bs \, g\cdot\Sp_{2n}(\A)}  \sum_{x  \in \Sp_{2n}(F) } f(Q_n \cdot(x h  ,g), s) \phi(h)\,dh \\
 &= \int\limits_{ g\cdot\Sp_{2n}(\A)}  f(Q_n \cdot(
 h  ,g), s) \phi(h)\,dh \\
 &= \int\limits_{\Sp_{2n}(\A)}  f(Q_n \cdot(h  ,1), s) \phi(gh)\,dh,
\end{align*}
where the last step follows from \eqref{Qninveq1}. The theorem now follows from Proposition \ref{interopprop}.
\end{proof}

Our goal will be to choose, at all places, the vectors $\phi_v$ and the sections $f_v$ in such a way that $Z_v(s,f_v,\phi_v)=c_v(s)\phi_v$ for an explicitly computable function $c_v(s)$.
\section{The local integral at finite places}\label{unramifiedcalcsec}
In this section we define suitable local sections and calculate the local integrals \eqref{Rsflocaleq} for all finite places $v$. We will drop the subscript $v$ throughout. Hence, let $F$ be a non-archimedean local field of characteristic zero. Let $\OF$ be its ring of integers, $\varpi$ a uniformizer, and $\p=\varpi\OF$ the maximal ideal.

\subsection{Unramified places}
We begin with the unramified case.
Let $\chi$ be an unramified character of $F^\times$, and let $\pi$ be a spherical representation of $G_{2n}(F)$. Let $f\in I(\chi,s)$ be the normalized unramified vector, i.e., $f:\:G_{4n}\rightarrow\C$ is given by
\begin{equation}\label{sphericalsectioneq}
 f(\mat{A}{*}{}{u\,^t\!A^{-1}}k)=\chi(u^{-n}\det(A))\big|u^{-n}\det(A)\big|^{(2n+1)(s+1/2)}
\end{equation}
for $A\in\GL_{2n}(F)$, $u\in F^\times$ and $k\in G_{4n}(\OF)$. Let $v_0$ be a spherical vector in $\pi$. We wish to calculate the local integral
\begin{equation}\label{Rsflocaleq2}
 Z(s;f,v_0)=\int\limits_{\Sp_{2n}(F)} f(Q_n \cdot (h, 1), s) \pi(h)v_0 \, dh.
\end{equation}

Let $\sigma,\chi_1,\cdots, \chi_n$ be unramified characters of $F^\times$ such that $\pi$ is the unique spherical constituent of $\chi_1\times \cdots \times \chi_n\rtimes\sigma$. Let $\alpha_i=\chi_i(\varpi)$, $i=1,\ldots,n$, and $\alpha_0=\sigma(\varpi)$ be the Satake parameters of $\pi$. The dual group of $\GSp_{2n} \times \GL_1$ is $\GSpin_{2n+1}(\C) \times \GL_1(\C)$. There is a natural map $\varrho_{2n+1}: \GSpin_{2n+1}(\C) \rightarrow \SO_{2n+1}(\C)\rightarrow \GL_{2n+1}(\C)$. Consequently we get a tensor product map from $\GSpin_{2n+1}(\C) \times \GL_1(\C)$ into $\GL_{2n+1}(\C)$ which we denote also by $\varrho_{2n+1}$. The $L$-function $L(s,\pi \boxtimes \chi, \varrho_{2n+1})$ is then given as follows,
\begin{equation}\label{Lspichidefeq}
 L(s,\pi \boxtimes \chi, \varrho_{2n+1})=\frac1{1-\chi(\varpi)q^{-s}}
 \prod_{i=1}^n\frac1{(1-\chi(\varpi)\alpha_iq^{-s})
 (1-\chi(\varpi)\alpha_i^{-1}q^{-s})}.
\end{equation}
 We also define $L(s, \chi)=(1-\chi(\varpi)q^{-s})^{-1}$, as usual.
\begin{proposition}\label{unramifiedcalculationprop}
 Using the above notations and hypotheses, the local integral \eqref{Rsflocaleq2} is given by
 $$
  Z(s;f,v_0)=\frac{L((2n+1)s+1/2,\pi \boxtimes \chi, \varrho_{2n+1})}{L((2n+1)(s+1/2), \chi)\prod\limits_{i=1}^nL((2n+1)(2s+1)-2i, \chi^2)}\,v_0
 $$
 for real part of $s$ large enough.
\end{proposition}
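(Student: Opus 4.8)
The plan is to reduce the computation of \eqref{Rsflocaleq2} to an integral over the Siegel parabolic and then invoke the rationality theorem \eqref{bocherersformulaeq} together with the Hecke–Satake relation \eqref{Heckesatakeactioneq}. First I would use the Cartan decomposition \eqref{cartaneq2c} of $\Sp_{2n}(F)$ and the fact that both $\pi(k)v_0 = v_0$ for $k \in K$ and, via \eqref{Qninveq2}, that $f(Q_n\cdot(k_1hk_2,1),s)$ is $K$-bi-invariant in $h$ (using that $f$ is right $G_{4n}(\OF)$-invariant). Thus the integrand is constant on each double coset $K_{e_1,\dots,e_n}$, and the integral collapses to
$$
 Z(s;f,v_0) = \sum_{0 \le e_1 \le \cdots \le e_n} \vol(K_{e_1,\dots,e_n})\, f\big(Q_n\cdot(\mathrm{diag}(\varpi^{e_1},\dots,\varpi^{e_n},\varpi^{-e_1},\dots,\varpi^{-e_n}),1),s\big)\, v_0.
$$

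Next I would compute the section value. Using the explicit form \eqref{Qrdefeq} of $Q_n$ and reducing modulo $P_{4n}(F)$ and $G_{4n}(\OF)$ — exactly the kind of manipulation carried out in \eqref{convergencelemmaeq4} in the proof of Lemma \ref{convergencelemma} — one finds that the argument $Q_n\cdot(\mathrm{diag}(\varpi^{\vec e},\varpi^{-\vec e}),1)$ has a $P_{4n}$-component whose $d$-value (hence the power of $\chi$ and of $\delta_{P_{4n}}$ via \eqref{ind-repn-fctn} and \eqref{sphericalsectioneq}) depends only on $e_1+\cdots+e_n$; concretely, one gets a factor $\big(\chi(\varpi)q^{-(2n+1)(s+1/2)}\big)^{e_1+\cdots+e_n}$ times a value of $f$ at an integral point, which by the normalization \eqref{sphericalsectioneq} equals $1$. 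Therefore, writing $Y = \chi(\varpi)q^{-(2n+1)(s+1/2)}$,
$$
 Z(s;f,v_0) = \Big(\sum_{0 \le e_1 \le \cdots \le e_n} \vol(K_{e_1,\dots,e_n})\, Y^{e_1+\cdots+e_n}\Big) v_0.
$$
But $\vol(K_{e_1,\dots,e_n}) = \mathcal{S}(T_{e_1,\dots,e_n})(q,q^2,\dots,q^n)$ — the same identity used in \eqref{convergencelemmaeq7}, since the trivial representation of $\Sp_{2n}(F)$ has Satake parameters $q^i$ — so I can substitute $\alpha_i = q^i$ into the rationality theorem \eqref{bocherersformulaeq} and obtain a closed form for the generating series.

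This leaves a purely algebraic matching step: rewrite
$$
 \frac{1-Y}{1-q^nY}\prod_{i=1}^n\frac{1+q^iY}{1-q^{n+i}Y}\bigg|_{Y=\chi(\varpi)q^{-(2n+1)(s+1/2)}}
$$
as the ratio on the right-hand side of the proposition. Unwinding the substitution, $q^nY = \chi(\varpi)q^{-(2n+1)s+1/2}$, so a factor like $(1-q^nY)^{-1}$ is $L((2n+1)s+1/2,\chi)$; similarly $1+q^iY$ combines with $1-q^{n+i}Y$ after replacing $Y \to Y$ and rescaling to produce, after a short computation, the product $\prod_i (1-\chi^2(\varpi)q^{-((2n+1)(2s+1)-2i)})$ in the denominator via $1-q^{2i}Y^2$-type identities — this is where one must be careful to note that the trivial-representation generating function only has the \emph{combined} factor $(1+q^iY)/(1-q^{n+i}Y)$ rather than the generic $(1-q^{2i}Y^2)/((1-X_iq^nY)(1-X_i^{-1}q^nY))$, and that the genuine $L$-function $L((2n+1)s+1/2,\pi\boxtimes\chi,\varrho_{2n+1})$ of \eqref{Lspichidefeq} appears because the numerator factors in \eqref{bocherersformulaeq} at the true Satake parameters $\alpha_i$ of $\pi$ are exactly $(1-\chi(\varpi)\alpha_iq^{-s'})(1-\chi(\varpi)\alpha_i^{-1}q^{-s'})$ with $s' = (2n+1)s+1/2$. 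I expect the only real obstacle to be bookkeeping: keeping the three different ``$s$-shifts'' $(2n+1)s+1/2$, $(2n+1)(s+1/2)$, and $(2n+1)(2s+1)-2i$ straight through the substitution, and correctly identifying which pieces of \eqref{bocherersformulaeq} survive when one evaluates the Weyl-invariant polynomial at $\alpha_i = q^i$ for the measure factor versus at the true $\alpha_i$ for the $L$-value. No deep new idea is needed beyond \eqref{bocherersformulaeq} and \eqref{Heckesatakeactioneq}.
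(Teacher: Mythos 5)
Your overall strategy (Cartan decomposition, evaluation of the section on each double coset, then the rationality theorem \eqref{bocherersformulaeq}) is the paper's strategy, and your computation of the factor $\big(\chi(\varpi)q^{-(2n+1)(s+1/2)}\big)^{e_1+\cdots+e_n}$ is correct. But the step where you ``collapse'' the integral is wrong, and the error propagates into the wrong generating function. The function $h\mapsto f(Q_n\cdot(h,1),s)$ is indeed bi-$K$-invariant, but the vector-valued function $h\mapsto\pi(h)v_0$ is only \emph{right} $K$-invariant; left translation gives $\pi(kh)v_0$, which is not $\pi(h)v_0$. So the integrand is not constant on the double cosets $K_{e_1,\ldots,e_n}$, and
$$
\int\limits_{K_{e_1,\ldots,e_n}}\pi(h)v_0\,dh \;=\; T_{e_1,\ldots,e_n}v_0\;=\;\mathcal{S}(T_{e_1,\ldots,e_n})(\alpha_1,\ldots,\alpha_n)\,v_0
$$
by \eqref{Heckesatakeactioneq}, where $\alpha_i=\chi_i(\varpi)$ are the Satake parameters of the restriction of $\pi$ to $\Sp_{2n}(F)$ --- it is \emph{not} $\vol(K_{e_1,\ldots,e_n})\,v_0$. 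Your displayed identity $Z(s;f,v_0)=\big(\sum\vol(K_{e_1,\ldots,e_n})Y^{e_1+\cdots+e_n}\big)v_0$ would be correct only if $\pi$ were trivial; as written it yields $\frac{1-Y}{1-q^nY}\prod_i\frac{1+q^iY}{1-q^{n+i}Y}$, an expression independent of $\pi$, which cannot equal the right-hand side of the proposition.

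The identity $\vol(K_{e_1,\ldots,e_n})=\mathcal{S}(T_{e_1,\ldots,e_n})(q,\ldots,q^n)$ is used in the paper only in the convergence argument (Lemma \ref{convergencelemma}), where one estimates $\int|f(\ldots)|\,dh$ and genuinely needs volumes; it plays no role in the unramified calculation itself. Your closing paragraph half-recognizes this (``the genuine $L$-function \ldots\ appears because the numerator factors \ldots\ at the true Satake parameters''), but that cannot be reconciled with having already specialized $X_i=q^i$: you must keep the $X_i$ generic, use \eqref{Heckesatakeactioneq} to replace the double-coset integral by the Hecke eigenvalue, and only then evaluate \eqref{bocherersformulaeq} at $X_i=\alpha_i$ and $Y=\chi(\varpi)q^{-(2n+1)(s+1/2)}$. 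With that correction the bookkeeping you describe ($q^nY=\chi(\varpi)q^{-((2n+1)s+1/2)}$, $1-Y=L((2n+1)(s+1/2),\chi)^{-1}$, $1-q^{2i}Y^2=L((2n+1)(2s+1)-2i,\chi^2)^{-1}$) does go through and gives the stated formula.
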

\begin{proof}
The calculation is similar to that in the proof of Lemma \ref{convergencelemma}. By \eqref{cartaneq2c}, \eqref{Qninveq2} and \eqref{convergencelemmaeq4},
\begin{align*}
 Z(s;f,v_0)&=\sum_{\substack{e_1, \cdots, e_n\in\Z\\0\leq e_1\leq \cdots \leq e_n}}f(Q_n \cdot {\rm diag}(\varpi^{e_1}, \cdots, \varpi^{e_n},\varpi^{-e_1}, \cdots,\varpi^{-e_n}), 1), s)\int\limits_{K_{e_1, \cdots, e_n}}\pi(h)v_0 \, dh\\
  &=\sum_{\substack{e_1, \cdots, e_n\in\Z\\0\leq e_1\leq \cdots \leq e_n}}\Big(\chi(\varpi)q^{-(2n+1)(s+1/2)}\Big)^{e_1+\cdots+e_n}\,T_{e_1,\ldots,e_n}v_0,
\end{align*}
where $T_{e_1,\ldots,e_n}v_0$ is the Hecke operator introduced in Sect.~\ref{convsec}. The restriction of $\pi$ to $\Sp_{2n}(F)$ equals the spherical constituent of $\chi_1\times\ldots\times\chi_n\rtimes1$, which has Satake parameters $\alpha_i=\chi_i(\varpi)$. The assertion now follows from \eqref{Heckesatakeactioneq} and \eqref{bocherersformulaeq}.
\end{proof}
\subsection{Ramified places}\label{s:badplaces}
Now we deal with the ramified places. For a non-negative integer $m$, let $\Gamma_{2n}(\p^m)= \{g \in \Sp_{2n}(\OF) : \ g \equiv I_{2n}\bmod \p^m\}$.
\begin{lemma}\label{keylemmabadplaces}
 Let $m$ be a positive integer. Let $p \in P_{4n}(F)$ and $h \in \Sp_{2n}(F)$ be such that $Q_n^{-1} p\,Q_n (h, 1) \in \Gamma_{4n}(\p^m)$. Then $h \in \Gamma_{2n}(\p^{m})$ and $p\in P_{4n}(F)\cap\Gamma_{4n}(\p^m)$.
\end{lemma}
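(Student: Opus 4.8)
The plan is to work directly with the explicit matrix form of $Q_n$ given in \eqref{Qrdefeq} (with $r=n$, so $\widetilde{I_n}=I_n$ and $I'_0 = 0$), which reads
\[
 Q_n = \left[\begin{smallmatrix}I_n &0&0&0 \\ 0& 0&0&I_n\\ 0&0&I_n&I_n\\ I_n&-I_n&0&0 \end{smallmatrix}\right].
\]
First I would set $X = Q_n^{-1} p\, Q_n (h,1) \in \Gamma_{4n}(\p^m)$ and rewrite this as $p\, Q_n (h,1) = Q_n X$, i.e. $p = Q_n X (h,1)^{-1} Q_n^{-1}$. Since $X \equiv I_{4n} \bmod \p^m$ and $(h,1)^{-1}$ has the block form $\mathrm{diag}(h^{-1},1)$ under the embedding \eqref{embedding-defn}, I would compute the lower-left $2n\times 2n$ block of $p$ (which must vanish, since $p\in P_{4n}(F)$) and the integrality/congruence conditions forced on the blocks of $h$. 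The key observation is that the condition ``$p \in P_{4n}$'' is one set of polynomial equations in the entries of $h$ and of $X$, and combined with $X \equiv I \bmod \p^m$ it should pin down $h \bmod \p^m$.

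The cleanest way to organize the computation is to use part iii) of Lemma \ref{lemmaqrprop}: for any $g \in G_{2n}$ the matrix $Q_n (g,g) Q_n^{-1}$ lies in $P_{4n}$ with $d$-value $1$. So I would write $(h,1) = (h,h)\,(1,h^{-1})$, giving
\[
 Q_n (h,1) = \big(Q_n (h,h) Q_n^{-1}\big)\, Q_n (1, h^{-1}),
\]
and since $Q_n(h,h)Q_n^{-1} =: q_h \in P_{4n}(F)$, we get $Q_n^{-1} p\, Q_n (h,1) = Q_n^{-1}(p\, q_h)\, Q_n (1,h^{-1})$. Writing $p' = p\, q_h \in P_{4n}(F)$, the hypothesis becomes $Q_n^{-1} p'\, Q_n (1, h^{-1}) \in \Gamma_{4n}(\p^m)$. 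Now $Q_n (1, h^{-1}) Q_n^{-1}$ is upper-triangular-ish and easy to write out explicitly: under the embedding, $(1,h^{-1})$ only touches the ``second'' copy of $G_{2n}$, and conjugating by $Q_n$ one checks by direct multiplication that $Q_n (1,h^{-1}) Q_n^{-1}$ has a controlled block structure. The membership in $\Gamma_{4n}(\p^m) \subset \Sp_{4n}(\OF)$ then forces, block by block, first that certain blocks built from $h^{-1}$ (hence $h$) are integral, so $h \in \Sp_{2n}(\OF)$, and then — reading off the remaining blocks and using $\equiv I \bmod \p^m$ — that $h \equiv I_{2n} \bmod \p^m$, i.e. $h \in \Gamma_{2n}(\p^m)$.

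Once $h \in \Gamma_{2n}(\p^m)$ is established, the second claim is immediate: $Q_n(h,1)Q_n^{-1} \in \Gamma_{4n}(\p^m)$ as well (the same explicit conjugation computation, now with $h \equiv I \bmod \p^m$, shows the conjugate stays in the principal congruence subgroup, or one uses that $\Gamma_{2n}(\p^m)\times\Gamma_{2n}(\p^m)$ maps into $\Gamma_{4n}(\p^m)$ under the embedding and $Q_n \in \Sp_{4n}(\OF)$ normalizes $\Gamma_{4n}(\p^m)$ — note $Q_n$ has integral entries and $Q_n^{-1}$ does too, as one checks from \eqref{Qrdefeq}). Therefore $p = \big(Q_n^{-1} p\, Q_n (h,1)\big)\,(h,1)^{-1} Q_n^{-1} \cdot Q_n$, more precisely $p = X' \cdot \big(Q_n (h,1)^{-1} Q_n^{-1}\big)$ with $X' = Q_n^{-1} p\, Q_n(h,1) \in \Gamma_{4n}(\p^m)$ and $Q_n(h,1)^{-1}Q_n^{-1}\in\Gamma_{4n}(\p^m)$, so $p \in \Gamma_{4n}(\p^m)$; since also $p \in P_{4n}(F)$ by hypothesis, $p \in P_{4n}(F)\cap\Gamma_{4n}(\p^m)$. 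The main obstacle is purely bookkeeping: carrying out the explicit $4\times 4$ block matrix conjugation $Q_n(1,h^{-1})Q_n^{-1}$ cleanly and extracting exactly which blocks of $h$ are constrained and to what precision — there is a real risk of an index or sign slip, so I would double-check by specializing to $n=1$ and to $h = I_{2n}$. Once the block structure is written down correctly, every implication is forced.
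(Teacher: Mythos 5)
Your overall strategy is the same as the paper's: use that $Q_n$ and $Q_n^{-1}$ have integral entries, hence $Q_n$ normalizes $\Gamma_{4n}(\p^m)$, rewrite the hypothesis as $p\cdot\big(Q_n(h,1)Q_n^{-1}\big)\in\Gamma_{4n}(\p^m)$ (or your variant with $(1,h^{-1})$), and read off block congruences. Your endgame for $p$ is exactly right: once $h\in\Gamma_{2n}(\p^m)$ is known, $Q_n(h,1)Q_n^{-1}\in\Gamma_{4n}(\p^m)$, so $p$ is a product of two elements of $\Gamma_{4n}(\p^m)$. The detour through $(h,1)=(h,h)(1,h^{-1})$ is harmless but buys nothing: $q_h=Q_n(h,h)Q_n^{-1}$ itself depends on $h$, so $p'=pq_h$ is no more under control than $p$, and the conjugate $Q_n(1,h^{-1})Q_n^{-1}$ is not ``upper-triangular-ish'' --- it has a full nonzero lower-left $2n\times 2n$ block (which encodes $h^{-1}-I$ rather than $h-I$).

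The gap is in the sentence claiming that membership in $\Gamma_{4n}(\p^m)$ ``forces, block by block, first that certain blocks built from $h^{-1}$ are integral.'' No block of the product $p'\cdot\big(Q_n(1,h^{-1})Q_n^{-1}\big)$ equals a block of $Q_n(1,h^{-1})Q_n^{-1}$: every block is premultiplied by unknown blocks of $p'$, which a priori lie in $F$ and not in $\OF$, so nothing about $h$ can be read off directly. The deduction must be done in two stages, and this decoupling is the actual content of the proof rather than bookkeeping. Writing $p=\mat{*}{*}{0}{g}$ with $g=\mat{g_1}{g_2}{g_3}{g_4}$, the bottom $2n$ rows of the product equal $g$ times the bottom $2n$ rows of $M:=Q_n(h,1)Q_n^{-1}$; in those rows certain block-columns are free of $h$ (the last block-column of $M$ there is $(0,I_n)^t$), and two others become free of $h$ after being added, which pins down $g\in\Gamma_{2n}(\p^m)$ \emph{before} anything is known about $h$. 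Only then can one invert: $M=p^{-1}\gamma$ with $\gamma\in\Gamma_{4n}(\p^m)$, and since $p^{-1}$ has vanishing lower-left block and integral lower-right block $g^{-1}$, the lower-left block of $M$, namely $\mat{-C}{I_n-D}{A-I_n}{B}$, lies in $M_{2n}(\p^m)$, giving $h\in\Gamma_{2n}(\p^m)$. Without this ordering your block-by-block reading stalls: for instance the congruence $g_3(I_n-D)+g_4B\in M_n(\p^m)$ says nothing about $B$ or $D$ until $g$ has been controlled.
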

\begin{proof}
Since $Q_n$ normalizes $\Gamma_{4n}(\p^m)$, the hypothesis is equivalent to $p\,Q_n (h, 1) Q_n^{-1}\in \Gamma_{4n}(\p^{m})$. Write $p=\mat{*}{*}{}{g}$ with $g=\mat{g_1}{g_2}{g_3}{g_4}\in\GL_{2n}(F)$ and $h=\mat{A}{B}{C}{D}$. Then
\begin{equation}\label{keylemmabadplaceseq1}
 \left[\begin{smallmatrix}*&*&*&*\\ *&*&*&*\\&&g_1&g_2\\&&g_3&g_4\end{smallmatrix}\right]\left[\begin{smallmatrix}A&&-B\\&I_n\\-C\:&\,I_n-D\,&\:D\\A-I_n&B&-B&\:I_n\end{smallmatrix}\right]\in\Gamma_{4n}(\p^m).
\end{equation}
From the last two rows and last three columns we get \begin{alignat}{3}\label{keylemmabadplaceseq2}
 g_1(I_n-D)+g_2B&\in M_n(\p^m),&\qquad g_1D-g_2B&\in I_n+M_n(\p^m),&\qquad g_2&\in M_n(\p^m),\\
 g_3(I_n-D)+g_4B&\in M_n(\p^m),&g_3D-g_4B&\in M_n(\p^m),&g_4&\in I_n+M_n(\p^m),
\end{alignat}
where $M_n(\p^m)$ is the set of $n\times n$ matrices with entries in $\p^m$. Hence $g\in I_{2n}+M_{2n}(\p^m)=\Gamma_{2n}(\p^m)$. Multiplying \eqref{keylemmabadplaceseq1} from the left by $p^{-1}$
and looking at the lower left block, we see that $\mat{-C}{I_n-D}{A-I_n}{B}\in M_{2n}(\p^m)$, i.e., $h\in\Gamma_{2n}(\p^m)$. Then it follows from \eqref{keylemmabadplaceseq1} that $p\in\Gamma_{4n}(\p^m)$.
\end{proof}

Let $m$ be a \emph{positive}  integer such that $\chi |_{(1+\p^m)\cap \OF^\times} = 1$. Let $f(g,s)$ be the unique function on $G_{4n}(F) \times \C$ such that
\begin{enumerate}
 \item $f(pg, s)  = \chi(p) \delta_{P_{4n}}(p)^{s + \frac12} f(g,s)$ for all $g\in G_{4n}(F)$ and $p\in P_{4n}(F)$.
 \item $f(gk, s) = f(g, s)$ for all $g \in G_{4n}(F)$ and $k \in \Gamma_{4n}(\p^m)$.
 \item $f(Q_n, s) = 1$.
 \item $f(g, s) = 0$ if $g \notin  P_{4n}(F)Q_n\Gamma_{4n}(\p^m)$.
\end{enumerate}
It is easy to see that $f$ is well-defined. Evidently, $f\in I(\chi,s)$.

\begin{proposition}\label{prop:badplaces}
 Let $\pi$ be any irreducible admissible representation of $G_{2n}(F)$. Let $m$ be a positive integer such that $\chi |_{(1+\p^m)\cap \OF^\times} = 1$ and such that there exists a vector $\phi$ in $\pi$ fixed by $\Gamma_{2n}(\p^m)$. Let $f\in I(\chi,s)$ be as above. Then $Z(s;f, \phi) = r_m \phi$, where $r_m=\mathrm{vol}(\Gamma_{2n}(\p^m))$ is a non-zero rational number depending only on $f$ and $m$.
\end{proposition}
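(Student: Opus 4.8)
The plan is to exploit the very small support that the section $f$ was constructed to have: I will show that the function $h\mapsto f(Q_n\cdot(h,1),s)$ on $\Sp_{2n}(F)$ is simply the characteristic function of $\Gamma_{2n}(\p^m)$, so that the local zeta integral \eqref{Rsflocaleq} collapses to a volume. First I would pin down the support. If $f(Q_n\cdot(h,1),s)\neq0$, then property (iv) of $f$ gives $Q_n(h,1)\in P_{4n}(F)Q_n\Gamma_{4n}(\p^m)$, say $Q_n(h,1)=p\,Q_nk$ with $p\in P_{4n}(F)$ and $k\in\Gamma_{4n}(\p^m)$. Rearranging, $Q_n^{-1}p^{-1}Q_n(h,1)=k\in\Gamma_{4n}(\p^m)$, and since $p^{-1}\in P_{4n}(F)$, Lemma \ref{keylemmabadplaces} applies and forces $h\in\Gamma_{2n}(\p^m)$. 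In particular the integrand is supported on the \emph{compact} set $\Gamma_{2n}(\p^m)$, so $Z(s;f,\phi)$ converges absolutely for every $s\in\C$ and we do not even need the half-plane of Proposition \ref{interopprop}.

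Conversely, I would check that $f(Q_n\cdot(h,1),s)=1$ for every $h\in\Gamma_{2n}(\p^m)$. The image of $(h,1)$ under the embedding \eqref{embedding-defn} has entries in $\OF$, lies in $\Sp_{4n}$, and is congruent to $I_{4n}$ modulo $\p^m$; hence $(h,1)\in\Gamma_{4n}(\p^m)$. By the right $\Gamma_{4n}(\p^m)$-invariance of $f$ (property (ii)) together with the normalization $f(Q_n,s)=1$ (property (iii)), we obtain $f(Q_n\cdot(h,1),s)=f(Q_n,s)=1$. Thus $h\mapsto f(Q_n\cdot(h,1),s)$ is exactly the characteristic function of $\Gamma_{2n}(\p^m)$, independently of $s$.

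It then remains to substitute this into the definition of $Z(s;f,\phi)$:
$$Z(s;f,\phi)=\int_{\Gamma_{2n}(\p^m)}\pi(h)\phi\,dh=\Big(\int_{\Gamma_{2n}(\p^m)}dh\Big)\phi=\mathrm{vol}(\Gamma_{2n}(\p^m))\,\phi,$$
where the middle equality uses that $\phi$ is fixed by $\Gamma_{2n}(\p^m)$, so $\pi(h)\phi=\phi$ there. Since the Haar measure on $\Sp_{2n}(F)$ is normalized so that $\Sp_{2n}(\OF)$ has volume $1$, and $\Gamma_{2n}(\p^m)$ is an open subgroup of $\Sp_{2n}(\OF)$ of index $|\Sp_{2n}(\OF/\p^m)|$, the constant $r_m=\mathrm{vol}(\Gamma_{2n}(\p^m))=|\Sp_{2n}(\OF/\p^m)|^{-1}$ is a positive rational number depending only on $m$ (equivalently, on the level of $f$). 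I do not expect any real obstacle here: the only substantive ingredient is the support computation, and that is precisely the content of Lemma \ref{keylemmabadplaces}; the choice of $f$ via properties (i)--(iv) — shrinking its support to the single double coset $P_{4n}(F)Q_n\Gamma_{4n}(\p^m)$ — was engineered exactly so that this argument runs.
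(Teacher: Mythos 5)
Your proof is correct and is essentially the paper's own argument, just written out in more detail: the paper likewise invokes Lemma \ref{keylemmabadplaces} to cut the domain of integration down to $\Gamma_{2n}(\p^m)$, notes the integrand equals $1$ there, and concludes $Z(s;f,\phi)=\mathrm{vol}(\Gamma_{2n}(\p^m))\phi$. Your added observations (absolute convergence for all $s$ from the compact support, and the explicit value $r_m=|\Sp_{2n}(\OF/\p^m)|^{-1}$) are correct but not needed for the statement.
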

\begin{proof}
By Lemma \ref{keylemmabadplaces},
\begin{equation}
 Z(s;f, \phi)=\int\limits_{\Sp_{2n}(F)} f(Q_n \cdot (h, 1), s) \pi(h)\phi\,dh=\int\limits_{\Gamma_{2n}(\p^m)} f(Q_n \cdot (h, 1), s) \pi(h)\phi\,dh.
\end{equation}
This last integral equals $\text{vol}(\Gamma_{2n}(\p^m))\phi$, completing the proof.
\end{proof}

\section{The local integral at a real place}\label{holdiscsec}
\subsection{Holomorphic discrete series representations}
We provide some preliminaries on holomorphic discrete series representations of $\Sp_{2n}(\R)$. We fix the standard maximal compact subgroup
$$
 K=K^{(n)}=\{\mat{A}{B}{-B}{A}\in\GL_{2n}(\R):\:A\,^t\!A+B\,^tB=1,\;A\,^tB=B\,^t\!A\}.
$$
We have $K\cong U(n)$ via $\mat{A}{B}{-B}{A}\mapsto A+iB$. Let
$$
 j(g,Z)=\det(CZ+D)\qquad\text{for }g=\mat{A}{B}{C}{D}\in\Sp_{2n}(\R)
$$
and $Z$ in the Siegel upper half space $\mathbb{H}_n$. Then, for any integer $k$, the map
\begin{equation}\label{scalarKtypekeq}
 K\ni g\longmapsto j(g,I)^{-k},\qquad\text{where }I=iI_n,
\end{equation}
is a character of $K$.

Let $\mathfrak{h}$ be the compact Cartan subalgebra and $e_1,\ldots,e_n$ the linear forms on the complexification $\mathfrak{h}_\C$ defined in \cite{ASch}. A system of positive roots is given by $e_i\pm e_j$ for $1\leq i<j\leq n$ and $2e_j$ for $1\leq j\leq n$. The positive compact roots are the $e_i-e_j$ for $1\leq i<j\leq n$. The $K$-types are parametrized by the analytically integral elements $k_1e_1+\ldots+k_ne_n$, where the $k_i$ are integers with $k_1\geq\ldots\geq k_n$. We write $\rho_{\mathbf{k}}$, $\mathbf{k}=(k_1,\ldots,k_n)$, for the $K$-type with highest weight $k_1e_1+\ldots+k_ne_n$. If $k_1=\ldots=k_n=k$, then $\rho_{\mathbf{k}}$ is the $K_\infty$-type given in \eqref{scalarKtypekeq}; we simply write $\rho_k$ in this case.

The holomorphic discrete series representations of $\Sp_{2n}(\R)$ are parametrized by elements $\lambda=\ell_1e_1+\ldots\ell_ne_n$ with integers $\ell_1>\ldots>\ell_n>0$. The representation corresponding to the \emph{Harish-Chandra parameter} $\lambda$ contains the $K$-type $\rho_{\mathbf{k}}$, where $\mathbf{k}=\lambda+\sum_{j=1}^nje_j$, with multiplicity one; see Theorem 9.20 of \cite{knapp1986}. We denote this representation by $\dot\pi_\lambda$ or by $\pi_{\mathbf{k}}$; sometimes one or the other notation is more convenient. If $\mathbf{k}=(k,\ldots,k)$ with a positive integer $k>n$, then we also write $\pi_k$ for $\pi_{\mathbf{k}}$.

Let $G_{2n}(\R)^+$ be the index two subgroup of $G_{2n}(\R)$ consisting of elements with positive multiplier. We may extend a holomorphic discrete series representation $\dot\pi_\lambda$ of $\Sp_{2n}(\R)$ in a trivial way to $G_{2n}(\R)^+\cong\Sp_{2n}(\R)\times\R_{>0}$. This extension induces irreducibly to $G_{2n}(\R)$. We call the result a holomorphic discrete series representation of $G_{2n}(\R)$ and denote it by the same symbol $\dot\pi_\lambda$ (or $\pi_{\mathbf{k}}$). These are the archimedean components of the automorphic representations corresponding to vector-valued holomorphic Siegel modular forms of degree $n$.

\begin{lemma}\label{holdiscserembeddinglemma}
 Let $\lambda=\ell_1e_1+\ldots+\ell_ne_n$ with integers $\ell_1>\ldots>\ell_n>0$.
 \begin{enumerate}
  \item The holomorphic discrete series representation $\dot\pi_\lambda$ of $\Sp_{2n}(\R)$ embeds into
   \begin{equation}\label{holdiscserembeddinglemmaeq1}
    \sgn^{\ell_n+n}|\cdot|^{\ell_n}\times\ldots\times\sgn^{\ell_1+1}|\cdot|^{\ell_1}\rtimes1,
   \end{equation}
   and in no other principal series representation of $\Sp_{2n}(\R)$.
  \item The holomorphic discrete series representation $\dot\pi_\lambda$ of $G_{2n}(\R)$ embeds into
   \begin{equation}\label{holdiscserembeddinglemmaeq2}
    \sgn^{\ell_n+n}|\cdot|^{\ell_n}\times\ldots\times\sgn^{\ell_1+1}|\cdot|^{\ell_1}\rtimes\sgn^\varepsilon|\cdot|^{-\frac12(\ell_1+\ldots+\ell_n)},
   \end{equation}
   where $\varepsilon$ can be either $0$ or $1$, and in no other principal series representation of $G_{2n}(\R)$.
 \end{enumerate}
\end{lemma}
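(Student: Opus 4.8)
The plan is to reduce everything to the well-understood theory of embeddings of holomorphic discrete series into principal series of $\Sp_{2n}(\R)$ and then bootstrap to $G_{2n}(\R)$ via the structure $G_{2n}(\R)^+\cong\Sp_{2n}(\R)\times\R_{>0}$ and induction. For part (i), the standard reference point is that a holomorphic discrete series $\dot\pi_\lambda$ of $\Sp_{2n}(\R)$ is the unique irreducible submodule of a degenerate-type standard module attached to the Siegel parabolic, but here we want the fine statement in terms of the \emph{full} (minimal-parabolic) principal series. I would first recall the classification of Langlands quotients / Casselman embeddings: every irreducible $(\mathfrak g,K)$-module embeds into some minimal principal series $\chi_1\times\dots\times\chi_n\rtimes 1$, and for discrete series the set of such embeddings is governed by the Harish--Chandra parameter together with the action of the Weyl group. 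The exponents $|\cdot|^{\ell_i}$ are forced by the infinitesimal character being $\lambda=\sum\ell_ie_i$ (so the Satake/Langlands parameter has the $\ell_i$ as its real parts up to Weyl conjugacy), and for a holomorphic discrete series only the specific ordering/signs in \eqref{holdiscserembeddinglemmaeq1} survive.

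For the crucial sign bookkeeping I would argue as follows. The signs $\sgn^{\ell_i+i}$ are pinned down by matching $K$-types: the representation $\sgn^{\ell_n+n}|\cdot|^{\ell_n}\times\dots\times\sgn^{\ell_1+1}|\cdot|^{\ell_1}\rtimes 1$, restricted to $K\cong U(n)$, must contain the minimal $K$-type $\rho_{\mathbf k}$ with $\mathbf k=\lambda+\sum_j je_j=(\ell_1+1,\ell_2+2,\dots,\ell_n+n)$ (reading indices appropriately), and a Frobenius-reciprocity/$K$-type computation shows this happens precisely for the indicated signs and for no other sign pattern or permutation of the inducing characters. Concretely: the central character of the $\Sp_{2n}(\R)$ principal series $\sgn^{a_1}\times\dots\times\sgn^{a_n}\rtimes 1$ on $-I_{2n}\in K$ is $(-1)^{a_1+\dots+a_n}$, which must equal $(-1)^{\sum k_i}=(-1)^{\sum(\ell_i+i)}$, giving one linear constraint; the remaining constraints come from looking one level up in the $K$-type lattice, or equivalently from the known determination of the \emph{lowest} $K$-type of each constituent of the relevant principal series. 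That $\dot\pi_\lambda$ embeds in no other principal series is the statement that the ``lowest $K$-type'' data plus infinitesimal character determines the module, combined with the fact that among the standard modules with this infinitesimal character only the one in \eqref{holdiscserembeddinglemmaeq1} has a holomorphic (scalar-extreme) lowest $K$-type.

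For part (ii) I would use that $\dot\pi_\lambda$ on $G_{2n}(\R)$ is $\mathrm{Ind}_{G_{2n}(\R)^+}^{G_{2n}(\R)}$ of the trivial extension of the $\Sp_{2n}(\R)$-representation across the $\R_{>0}$ factor. Restriction-induction (Mackey) then says $\dot\pi_\lambda|_{\Sp_{2n}(\R)}=\dot\pi_\lambda$ (the $\Sp_{2n}(\R)$-module is the same), so any $G_{2n}(\R)$-embedding restricts to an $\Sp_{2n}(\R)$-embedding, forcing the finite-part characters to be exactly those of \eqref{holdiscserembeddinglemmaeq1}; what is left undetermined is the character $\sigma$ of $\GL_1$ in the Siegel-Levi notation $\chi_1\times\dots\times\chi_n\rtimes\sigma$. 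The multiplier of $\dot\pi_\lambda$ being trivial (we extended trivially across $\R_{>0}$) forces $|\sigma|$ to cancel the $|\det A|$-part of the inducing character, i.e.\ $\sigma=\sgn^\varepsilon|\cdot|^{-\frac12(\ell_1+\dots+\ell_n)}$; the sign $\varepsilon$ genuinely has two choices because inducing from $G_{2n}(\R)^+$ to $G_{2n}(\R)$ and then restricting the $\GL_1$-character down can be twisted by the order-two quotient $G_{2n}(\R)/G_{2n}(\R)^+$, and both induced principal series of $G_{2n}(\R)$ contain $\dot\pi_\lambda$ by reciprocity. I would close by noting no other principal series works, again by restricting to $\Sp_{2n}(\R)$ and invoking part (i).

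The main obstacle I anticipate is the exact determination of the signs $\sgn^{\ell_i+i}$ (as opposed to the exponents, which are essentially forced by the infinitesimal character): this requires a genuine $K$-type computation inside a length-$>1$ principal series of $\Sp_{2n}(\R)$, and one has to be careful about the chosen positive system \eqref{posrestrictedrootseq} — which the excerpt deliberately flags as ``what is more often called a negative system'' — because the identification of which ordering of the $|\cdot|^{\ell_i}$ corresponds to an \emph{embedding} (submodule) rather than a \emph{quotient} is sensitive to that convention. I would handle this by citing the standard references on discrete series embeddings (e.g.\ Casselman's subrepresentation theorem together with Knapp's book, and the explicit holomorphic-discrete-series computations in the $\Sp_{2n}$ literature) and by double-checking the $n=1,2$ cases by hand against known results, where $\dot\pi_\lambda$ for $\Sp_2(\R)$ is the discrete series $D_\ell$ embedding in $\sgn^{\ell+1}|\cdot|^{\ell}\rtimes 1$ and the $\Sp_4(\R)$ case is in \cite{ASch}.
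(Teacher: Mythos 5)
Your main proposed route for part (i) has a genuine gap. You want to pin down the signs $\sgn^{\ell_i+i}$ \emph{and} the ordering of the exponents by ``a Frobenius-reciprocity/$K$-type computation,'' and you want uniqueness from ``lowest $K$-type data plus infinitesimal character.'' But $K$-types cannot see any of the information that the lemma is actually about. By the Iwasawa decomposition, the restriction to $K\cong U(n)$ of a minimal principal series $\sgn^{a_1}|\cdot|^{s_1}\times\cdots\times\sgn^{a_n}|\cdot|^{s_n}\rtimes1$ of $\Sp_{2n}(\R)$ is $\Ind_M^K$ of the restriction of the inducing character to $M\cong\{\pm1\}^n$; this depends only on the parities $a_i\bmod 2$, is invariant under permuting the factors and under $s_i\mapsto -s_i$, and is the same for every composition factor count — in particular it is identical for all $2^n\cdot n!$ Weyl-conjugate principal series with the same infinitesimal character and the same parities. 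So a $K$-type computation can at best recover the $a_i\bmod 2$ as an unordered multiset (your central-character constraint is one instance of this), and it can never distinguish whether $\dot\pi_\lambda$ occurs as a \emph{submodule}, a quotient, or a subquotient in the middle of the composition series of a given conjugate. The actual content of the lemma — which single Weyl chamber of exponents yields an embedding — is equivalent to computing the (co)Jacquet module of $\dot\pi_\lambda$, i.e.\ the leading exponents of its matrix coefficients, and that is not accessible from $K$-type data. Your claim that ``among the standard modules with this infinitesimal character only the one in \eqref{holdiscserembeddinglemmaeq1} has a holomorphic lowest $K$-type'' is false for the same reason: they all contain $\rho_{\mathbf k}$ with the same multiplicity.

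To your credit, you flag exactly this issue at the end and say you would fall back on citing the literature; that fallback is in fact the paper's entire proof of (i), which consists of invoking the main result of Yamashita (1989) on embeddings of holomorphic discrete series into principal series. So the proposal is salvageable by deleting the $K$-type argument and keeping the citation. For part (ii), your Mackey/Frobenius-reciprocity argument for passing from $\Sp_{2n}(\R)$ to $G_{2n}(\R)$ is correct and is essentially equivalent to the paper's one-line observation that holomorphic discrete series of $G_{2n}(\R)$ are invariant under twisting by the sign character (which is why both values of $\varepsilon$ occur); your determination of $|\sigma|$ from triviality of the action of the connected center is also correct.
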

\begin{proof}
i) follows from the main result of \cite{Yamashita1989}. Part ii) can be deduced from i), observing that the holomorphic discrete series representations of $G_{2n}(\R)$ are invariant under twisting by the sign character.
\end{proof}

\begin{lemma}\label{degprincserhollemma}
 Let $k$ be a positive integer. Consider the degenerate principal series representation of $G_{2n}(\R)$ given by
 \begin{equation}\label{degprincserhollemmaeq1}
  J(s):=\sgn^k|\cdot|^{s-\frac{n+1}2}\rtimes\sgn^\varepsilon|\cdot|^{\frac{n(n+1)}4-\frac{ns}2},
 \end{equation}
 where $\varepsilon\in\{0,1\}$. Then $J(s)$ contains the holomorphic discrete series representation $\pi_k$ of $G_{2n}(\R)$ as a subrepresentation if and only if $s=k$.
\end{lemma}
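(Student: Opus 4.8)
The plan is to compare composition series of the degenerate principal series $J(s)$ with the principal series of $\Sp_{2n}(\R)$ into which $\pi_k$ embeds, as determined by Lemma \ref{holdiscserembeddinglemma}. Since $J(s)$ is induced from the Siegel parabolic, inducing it in stages up from the Borel shows that $J(s)$ is a subquotient (in fact embeds, after a suitable reordering of inducing characters by intertwining operators) of a full principal series representation whose inducing data on $\GL_1^n$ is $\sgn^k|\cdot|^{s-\frac{n+1}2}, \sgn^k|\cdot|^{s-\frac{n+1}2+1},\ldots,\sgn^k|\cdot|^{s-\frac{n+1}2+(n-1)}$ together with the multiplier twist. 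The point is that the holomorphic discrete series $\pi_k=\pi_{(k,\ldots,k)}$ corresponds, by the recipe in Sect.~\ref{holdiscsec}, to Harish-Chandra parameter $\lambda=\ell_1e_1+\ldots+\ell_ne_n$ with $\ell_j = k-j$, so $\ell_n=k-n,\ldots,\ell_1=k-1$; hence by Lemma \ref{holdiscserembeddinglemma} it embeds into the principal series of $\Sp_{2n}(\R)$ with inducing characters $\sgn^{\ell_n+n}|\cdot|^{\ell_n},\ldots,\sgn^{\ell_1+1}|\cdot|^{\ell_1}$, i.e. $\sgn^k|\cdot|^{k-n},\ldots,\sgn^k|\cdot|^{k-1}$, and in no other principal series.

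First I would compute, for the group $G_{2n}(\R)$, which principal series representations (induced from the Borel, after restriction to $\Sp_{2n}(\R)$) have $J(s)$ as a constituent; this is just induction in stages from $P_{2n}$ to the Borel, and it produces the exponents $s-\frac{n+1}2, s-\frac{n+1}2+1,\ldots,s+\frac{n-1}2$ on the $\GL_1$-factors, each with sign $\sgn^k$, plus the multiplier character. Next I would match this multiset of characters, up to the Weyl group action (which is what controls which principal series share constituents), against the multiset $\{\sgn^k|\cdot|^{k-n},\ldots,\sgn^k|\cdot|^{k-1}\}$ forced by Lemma \ref{holdiscserembeddinglemma}. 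Equating the two multisets gives $\{s-\frac{n+1}2,\ldots,s+\frac{n-1}2\} = \{k-n,\ldots,k-1\}$ as multisets, which (both being arithmetic progressions with common difference $1$ and $n$ terms) forces $s-\frac{n+1}2 = k-n$, i.e. $s=k$. Conversely, for $s=k$ one must check that $\pi_k$ actually occurs as a \emph{subrepresentation} (not merely a subquotient) of $J(k)$; here I would invoke that $J(k)$ at this point contains a holomorphic submodule — concretely, the lowest weight $\rho_k$ occurs in $J(k)$ because the relevant function $g\mapsto j(g,I)^{-k}$-type vector lies in the induced space, and the cyclic module it generates is the holomorphic discrete series $\pi_k$ by the classification of lowest-weight modules.

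The main obstacle I anticipate is the ``only if'' direction done cleanly: ruling out the possibility that $\pi_k$ sits inside $J(s)$ for some $s\neq k$ requires knowing that the \emph{only} principal series of $\Sp_{2n}(\R)$ containing $\pi_k$ is the one in Lemma \ref{holdiscserembeddinglemma}(i), and then knowing that every irreducible submodule of $J(s)$ embeds in a principal series whose inducing data is Weyl-conjugate to the one above — i.e. controlling embeddings versus mere subquotients. The cleanest route is a central-character / infinitesimal-character bookkeeping argument: the infinitesimal character of $J(s)$ is the Weyl orbit of $(s-\frac{n+1}2,\ldots,s+\frac{n-1}2)$ (in the standard coordinates), the infinitesimal character of $\pi_k$ is the Weyl orbit of its Harish-Chandra parameter $(k-1,k-2,\ldots,k-n)$, and these can coincide only when $s=k$; combined with the fact that $\pi_k$ genuinely is a submodule at $s=k$ — which follows because the holomorphic vector of weight $\rho_k$ literally appears in $J(k)$ and generates a lowest weight module that must be the holomorphic discrete series — this pins down the statement.

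\medskip

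\noindent\emph{Remark on technique.} An alternative to infinitesimal-character bookkeeping is to use the explicit structure of $\sgn^k\rtimes\sigma$-type degenerate principal series of $\Sp_{2n}(\R)$ worked out in the literature (e.g. via the Kudla--Rallis filtration or Lee's results), where the reducibility points and the list of subquotients are tabulated; the holomorphic discrete series appears as a submodule precisely at the endpoint of the complementary-series range, which translates into $s=k$ after matching normalizations with \eqref{degprincserhollemmaeq1}. Either way, the decisive input is Lemma \ref{holdiscserembeddinglemma}, which tells us the unique principal series embedding of $\pi_k$, so the proof reduces to the elementary observation that two length-$n$ unit-step arithmetic progressions agree as multisets iff they agree termwise.
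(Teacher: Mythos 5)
Your overall strategy coincides with the paper's: infinitesimal-character (Weyl-orbit-of-exponents) bookkeeping for the ``only if'' direction, and induction in stages from the Siegel parabolic together with the explicit weight-$k$ vector for the ``if'' direction. However, there is a concrete arithmetic slip in the bookkeeping which, as written, does not produce $s=k$. When you realize $J(s)$ inside a Borel-induced representation, the character $\sgn^k|\cdot|^{s-\frac{n+1}2}\circ\det$ of $\GL_n(\R)$ is a constituent of the product of characters whose exponents are \emph{centered} at $s-\frac{n+1}2$ (the trivial representation of $\GL_n(\R)$ sits inside $|\cdot|^{-\frac{n-1}2}\times\cdots\times|\cdot|^{\frac{n-1}2}$, because of the $\rho$-shift in normalized induction), so the correct multiset of exponents is $\{s-n,\,s-n+1,\ldots,\,s-1\}$ --- this is exactly the representation $J'(s)$ in the paper's proof. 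You instead take the progression to \emph{start} at $s-\frac{n+1}2$; matching the smallest term of your multiset with $k-n$ gives $s=k-\frac{n-1}2$ rather than $s=k$ (and your displayed tuple $(s-\frac{n+1}2,\ldots,s+\frac{n-1}2)$ has $n+1$ entries for $n$ factors). With the corrected multiset the comparison $\{s-n,\ldots,s-1\}=\{k-n,\ldots,k-1\}$ does force $s=k$, so the slip is repairable, but the decisive step fails for $n\geq 2$ as stated.

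In the ``if'' direction your key claim --- that the cyclic $(\mathfrak{g},K)$-module generated by the weight-$k$ vector of $J(k)$ is $\pi_k$ --- is asserted rather than proved; a priori that vector could generate a reducible lowest-weight module, and one must also check it is killed by $\mathfrak{p}^-$. The paper closes this gap with a multiplicity-one argument: by Lemma \ref{holdiscserembeddinglemma}~ii), $\pi_k$ embeds into the Borel-induced $J'(k)$; the weight-$k$ vector of $J'(k)$ is unique up to scalars and equals the explicit function $f_k$, which visibly lies in the subspace $J(k)\subset J'(k)$; hence the minimal $K$-type vector of the copy of $\pi_k$ inside $J'(k)$ lies in $J(k)$, and irreducibility of $\pi_k$ gives $\pi_k\subset J(k)$. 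Routing your argument through this observation would make it complete.
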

\begin{proof}
By infinitesimal character considerations, we only need to prove the ``if'' part. Since $\pi_k$ is invariant under twisting by ${\rm sgn}$, we may assume that $\varepsilon=0$. Consider the Borel-induced representation
\begin{equation}\label{degprincserhollemmaeq2}
 J'(s):=\sgn^k|\cdot|^{s-n}\times\sgn^k|\cdot|^{s-n+1}\times\ldots\times\sgn^k|\cdot|^{s-1}\rtimes|\cdot|^{\frac{n(n+1)}4-\frac{ns}2}.
\end{equation}
By Lemma \ref{holdiscserembeddinglemma} ii), $\pi_k$ is a subrepresentation of $J'(k)$. Since $|\cdot|^{s-n}\times|\cdot|^{s-n+1}\times\ldots\times|\cdot|^{s-1}$ contains the trivial representation of $\GL_n(\R)$ twisted by $|\cdot|^{s-\frac{n+1}2}$, it follows that $J(s)\subset J'(s)$. Let $f_s$ be the function on $G_{2n}(\R)$ given by
\begin{equation}\label{degprincserhollemmaeq3}
 f_s(\mat{A}{*}{}{u\,^t\!A^{-1}}g,s)= \sgn^k(\det(A)) |u|^{-\frac{ns}2}|\det(A)|^s\,j(g,I)^{-k}
\end{equation}
for $A\in\GL_n(\R)$, $u\in\R^\times$ and $g\in K$. Then $f_s$ is a well-defined element of $J(s)$. Since $f_s$ is the unique up to multiples vector of weight $k$ in $J'(s)$, $\pi_k$ lies in the subspace $J(k)$ of $J'(k)$.
\end{proof}

Our method to calculate the local archimedean integrals \eqref{Rsflocaleq} will work for holomorphic discrete series representations $\dot\pi_\lambda$, where $\lambda=\ell_1e_1+\ldots+\ell_ne_n$ with $\ell_1>\ldots>\ell_n>0$ satisfies
\begin{equation}\label{lambdaconditioneq}
 \ell_j-\ell_{j-1}\text{ is odd for }2\leq j\leq n.
\end{equation}
Equivalently, we work with the holomorphic discrete series representations $\pi_{\mathbf{k}}$, where $\mathbf{k}=k_1e_1+\ldots+k_ne_n$ with $k_1\ge\ldots\ge k_n>n$ and all $k_i$ of the same parity; this last condition can be seen to be equivalent to \eqref{lambdaconditioneq}. An example for $\lambda$ satisfying \eqref{lambdaconditioneq} is $(k-1)e_1+\ldots+(k-n)e_n$, the Harish-Chandra parameter of $\pi_k$. The next lemma implies that whenever \eqref{lambdaconditioneq} is satisfied, then $\dot\pi_\lambda$ contains a convenient scalar $K$-type to work with.
\begin{lemma}\label{scalarKtypeslemma}
 Assume that $\lambda=\ell_1e_1+\ldots+\ell_ne_n$ with $\ell_1>\ldots>\ell_n>0$ satisfying \eqref{lambdaconditioneq}. Let $m$ be a non-negative, even integer. Then the holomorphic discrete series representation $\dot\pi_\lambda$ contains the $K$-type
 \begin{equation}\label{scalarKtypeslemmaeq1}
  \rho_{\mathbf{m}},\qquad \mathbf{m}=(\ell_1+1+m)(e_1+\ldots+e_n),
 \end{equation}
 with multiplicity one.
\end{lemma}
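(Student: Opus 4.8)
The plan is to determine which $K$-types occur in $\dot\pi_\lambda$ by combining Blattner's multiplicity formula (or the known branching description of holomorphic discrete series) with the parity constraint \eqref{lambdaconditioneq}. Recall that for a holomorphic discrete series representation with Harish-Chandra parameter $\lambda$, the minimal $K$-type is $\rho_{\mathbf{k}}$ with $\mathbf{k}=\lambda+\sum_{j=1}^n j e_j$, and every $K$-type of $\dot\pi_\lambda$ is of the form $\rho_{\mathbf{k}+\mu}$ where $\mu$ runs over sums (with nonnegative integer coefficients) of the noncompact positive roots $e_i+e_j$ ($1\le i\le j\le n$); moreover the multiplicity of such a $K$-type is the number of ways of writing the shift as such a sum, counted against the $K$-type decomposition. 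Equivalently, $\dot\pi_\lambda \cong \bigoplus_{\mathbf{p}} \rho_{\mathbf{k}} \otimes S^{\mathbf{p}}(\C^n)$-type pieces coming from the symmetric algebra $S(\mathfrak p^+)$ on the holomorphic tangent space, and one reads off multiplicities via the Littlewood–Richardson / Pieri-type rules for $\GL_n$.

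The key steps, in order, would be: (i) Write down the first scalar $K$-type above the minimal one. Since $\mathbf{k}=(\ell_1+1,\ell_2+2,\ldots,\ell_n+n)$, the candidate scalar $K$-type $\mathbf{m}=(\ell_1+1+m,\ldots,\ell_1+1+m)$ differs from $\mathbf{k}$ by $(m,\ \ell_1-\ell_2+m-1,\ \ell_1-\ell_3+m-2,\ \ldots,\ \ell_1-\ell_n+m-(n-1))$; one checks using \eqref{lambdaconditioneq} — which forces $\ell_1-\ell_j\equiv j-1\pmod 2$ — that every entry of this difference vector is a nonnegative \emph{even} integer (here is where $m$ even and $m\ge 0$, together with $\ell_1\ge\ell_j$, enter). (ii) Observe that the coordinates of $\mathbf m - \mathbf k$ are weakly decreasing, so $\mathbf m - \mathbf k$ is itself a dominant weight for $\GL_n$ with even entries; such weights are exactly the highest weights of $\GL_n$-representations appearing in $S(\mathrm{Sym}^2\C^n)=S(\mathfrak p^+)$, each with multiplicity one (this is the classical decomposition $S(\mathrm{Sym}^2\C^n)=\bigoplus_{\mu\text{ even, dominant}} V_\mu$). (iii) Apply the Pieri/branching rule: the multiplicity of $\rho_{\mathbf m}$ in $\dot\pi_\lambda = \rho_{\mathbf k}\otimes S(\mathfrak p^+)$ is the multiplicity of the trivial $\GL_n$-type in $\rho_{\mathbf m}\otimes \rho_{\mathbf k}^\vee \otimes S(\mathfrak p^+)$, i.e.\ the coefficient of $V_{\mathbf m - \mathbf k}$ in $S(\mathfrak p^+)$ provided $\mathbf m - \mathbf k$ is dominant — which by (ii) equals one, and zero otherwise. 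So the multiplicity is exactly one.

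The main obstacle is step (iii): controlling the multiplicity precisely rather than just showing $\rho_{\mathbf m}$ occurs. One must be careful that $\rho_{\mathbf m}$ could a priori also arise from \emph{non}-scalar intermediate $K$-types $\rho_{\mathbf k + \nu}$ combined with further tensoring, but since the $K$-type decomposition of $\dot\pi_\lambda$ is multiplicity-free in each $S^d(\mathfrak p^+)$-graded piece only after summing — one really does need the full statement that $\dot\pi_\lambda|_K = \bigoplus_{\mu\ \mathrm{even\ dominant}} \rho_{\mathbf k + \mu}$ with each such $\mu$ contributing multiplicity one (a theorem of Hua/Schmid on the $K$-decomposition of holomorphic discrete series of $\Sp_{2n}(\R)$). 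Granting that, the lemma is immediate: $\mathbf m - \mathbf k$ is even and dominant by the parity hypothesis, hence $\rho_{\mathbf m}$ occurs with multiplicity exactly one. An alternative, more self-contained route avoiding Blattner entirely is to use Lemma \ref{degprincserhollemma} and Lemma \ref{holdiscserembeddinglemma}: realize $\dot\pi_\lambda$ inside the principal series \eqref{holdiscserembeddinglemmaeq1} and compute the $K$-spectrum of that principal series directly via Frobenius reciprocity for $U(n)$, isolating the $\rho_{\mathbf m}$-isotypic component; I would fall back on this if the branching-rule bookkeeping becomes unwieldy, though it trades one computation for another.
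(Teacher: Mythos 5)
Your argument is correct, but it takes a genuinely different route from the paper's. The paper first reduces ``multiplicity one'' to mere occurrence by citing Theorem 8.1 of Knapp (which bounds the multiplicity of a one-dimensional $K$-type by one), then reduces $m>0$ to $m=0$ using the element $D_+$ of weight $2(e_1+\cdots+e_n)$ from Johnson together with the Howe--Kraft freeness result, and finally handles $m=0$ by an induction on $n$ carried out inside the Hecht--Schmid Blattner formula. You instead compute the multiplicity in one stroke from the $K$-module isomorphism of $\dot\pi_\lambda$ with $S(\mathfrak{p}_\C^+)$ tensored with its minimal $K$-type (a fact the paper also invokes), Hua's multiplicity-free decomposition $S({\rm Sym}^2\C^n)=\bigoplus_{\mu\ {\rm even\ dominant}}V_\mu$, and Frobenius reciprocity: since $\rho_{\mathbf m}$ is a character, its multiplicity equals $\dim\Hom_K(\rho_{\mathbf m}\otimes\rho_{\mathbf k}^\vee,\,S(\mathfrak{p}_\C^+))$, and $\rho_{\mathbf m}\otimes\rho_{\mathbf k}^\vee$ is irreducible with even dominant highest weight precisely because of \eqref{lambdaconditioneq}. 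Your route is shorter and avoids both the induction and the $D_+$ step; what the paper's route buys is that it rests only on the Blattner formula rather than on identifying $\dot\pi_\lambda|_K$ with the full generalized Verma module (irreducibility of that module in the discrete series range is the standard fact you are implicitly using).

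Two small corrections. First, the entries of $\mathbf m-\mathbf k=(m,\ \ell_1-\ell_2+m-1,\ \ldots)$ are weakly \emph{increasing}, not decreasing, since $\ell_j-\ell_{j+1}\ge 1$; the dominant weight you actually need is that of $\rho_{\mathbf m}\otimes\rho_{\mathbf k}^\vee$, i.e.\ the reversal of this vector, and the conclusion (even, nonnegative, dominant) is unaffected. Second, the worry in your ``main obstacle'' paragraph rests on a false statement: the multiplicity-free decomposition $\dot\pi_\lambda|_K=\bigoplus_\mu\rho_{\mathbf k+\mu}$ is the Hua/Schmid theorem only for \emph{scalar} minimal $K$-type; for vector-valued $\mathbf k$ the restriction to $K$ is not multiplicity free. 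But you do not need that statement: because the target $K$-type is one-dimensional, the Frobenius reciprocity computation in your step (iii) already pins the multiplicity at exactly one, so your proof is complete without it.
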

\begin{proof}
By Theorem 8.1 of \cite{knapp1986} we need only show that $\rho_{\mathbf{m}}$ occurs in $\dot\pi_\lambda$. We will use induction on $n$. The result is obvious for $n=1$. Assume that $n>1$, and that the assertion has already been proven for $n-1$.

Using standard notations as in \cite{ASch}, we have $\mathfrak{g}_\C=\mathfrak{p}_\C^-\oplus\mathfrak{k}_\C\oplus\mathfrak{p}_\C^+$. The universal enveloping algebra of $\mathfrak{p}_\C^+$ is isomorphic to the symmetric algebra $S(\mathfrak{p}_\C^+)$. We have $\dot\pi_\lambda\cong S(\mathfrak{p}_\C^+)\otimes\rho_\lambda$ as $K$-modules. Let $I$ be the subalgebra of $S(\mathfrak{p}_\C^+)$ spanned by the highest weight vectors of its $K$-types. By Theorem A of \cite{Johnson1980}, there exists in $I$ an element $D_+$ of weight $2(e_1+\ldots+e_n)$. By the main result of \cite{HoweKraft1998}, the space of $K$-highest weight vectors of $\dot\pi_\lambda$ is acted upon freely by $I$. It follows that we need to prove our result only for $m=0$.

We will use the Blattner formula proven in \cite{HechtSchmid1975}. It says that the multiplicity with which $\rho_{\mathbf{m}}$ occurs in $\dot\pi_\lambda$ is given by
\begin{equation}\label{scalarKtypeslemmaeq2}
 {\rm mult}_\lambda(\mathbf{m})=\sum_{w\in W_K}\varepsilon(w)Q(w(\mathbf{m}+\rho_c)-\lambda-\rho_n).
\end{equation}
Here, $W_K$ is the compact Weyl group, which in our case is isomorphic to the symmetric group $S_n$, and $\varepsilon$ is the sign character on $W_K$; the symbols $\rho_c$ and $\rho_n$ denote the half sums of the positive compact and non-compact roots, respectively; and $Q(\mu)$ is the number of ways to write $\mu$ as a sum of positive non-compact roots. In our case
\begin{equation}\label{scalarKtypeslemmaeq3}
 \rho_c=\frac12\sum_{j=1}^n(n+1-2j)e_j,\qquad\rho_n=\frac{n+1}2\sum_{j=1}^ne_j.
\end{equation}
Hence
\begin{align}\label{scalarKtypeslemmaeq4}
 {\rm mult}_\lambda(\mathbf{m})&=\sum_{\sigma\in S_n}\varepsilon(\sigma)Q\Big(\sigma\Big((\ell_1+1+m)\sum_{j=1}^ne_j+\frac12\sum_{j=1}^n(n+1-2j)e_j\Big)-\lambda-\frac{n+1}2\sum_{j=1}^ne_j\Big)\nonumber\\
  &=\sum_{\sigma\in S_n}\varepsilon(\sigma)Q\Big(\sum_{j=1}^n(\ell_1+1+m-\ell_j)e_j-\sigma\Big(\sum_{j=1}^nje_j\Big)\Big).
\end{align}
Now assume that $m=0$. Then
\begin{equation}\label{scalarKtypeslemmaeq5}
 {\rm mult}_\lambda(\mathbf{m})=\sum_{\sigma\in S_n}\varepsilon(\sigma)Q\Big(\sum_{j=1}^n(\ell_1+1-\ell_j)e_j-\sigma\Big(\sum_{j=1}^nje_j\Big)\Big).
\end{equation}
If $\sigma(1)\neq1$, then the coefficient of $e_1$ is negative, implying that $Q(\ldots)=0$. Hence
\begin{align}\label{scalarKtypeslemmaeq6}
 {\rm mult}_\lambda(\mathbf{m})&=\sum_{\substack{\sigma\in S_n\\\sigma(1)=1}}\varepsilon(\sigma)Q\Big(\sum_{j=2}^n(\ell_1+1-\ell_j)e_j-\sigma\Big(\sum_{j=2}^nje_j\Big)\Big)\nonumber\\
 &=\sum_{\substack{\sigma\in S_n\\\sigma(1)=1}}\varepsilon(\sigma)Q\Big(\sum_{j=1}^{n-1}(\ell_1-\ell_{j+1})e_{j+1}-\sigma\Big(\sum_{j=1}^{n-1}je_{j+1}\Big)\Big).
\end{align}
If we set $e'_j=e_{j+1}$ and $m'=\ell_1-\ell_2-1$, then this can be written as
\begin{equation}\label{scalarKtypeslemmaeq7}
 {\rm mult}_\lambda(\mathbf{m})=\sum_{\sigma\in S_{n-1}}\varepsilon(\sigma)Q\Big(\sum_{j=1}^{n-1}(\ell_2+1+m'-\ell_{j+1})e'_j-\sigma\Big(\sum_{j=1}^{n-1}je'_j\Big)\Big).
\end{equation}
We see that this is the formula \eqref{scalarKtypeslemmaeq4}, with $n-1$ instead of $n$ and $m'$ instead of $m$; note that $m'$ is even and non-negative by our hypotheses. There are two different $Q$-functions involved, for $n$ and for $n-1$, but since the argument of $Q$ in \eqref{scalarKtypeslemmaeq6} has no $e_1$, we may think of it as the $Q$-function for $n-1$. Therefore \eqref{scalarKtypeslemmaeq7} represents the multiplicity of the $K^{(n-1)}$-type $(\ell_2+1+m')(e'_1+\ldots+e'_{n-1})$ in the holomorphic discrete series representation $\dot\pi_{\lambda'}$ of $\Sp_{2(n-1)}(\R)$, where $\lambda'=\ell_2e'_1+\ldots+\ell_ne'_{n-1}$. By induction hypothesis, this multiplicity is $1$, completing our proof.
\end{proof}
\subsection{Calculating the integral}\label{reallocalsec}
In the remainder of this section we fix a real place $v$ and calculate the local archimedean integral \eqref{Rsflocaleq} for a certain choice of vectors $f$ and $w$. To ease notation, we omit the subscript $v$. We assume that the underlying representation $\pi$ of $G_{2n}(\R)$ is a holomorphic discrete series representation $\dot\pi_\lambda$, where $\lambda=\ell_1e_1+\ldots+\ell_ne_n$ with $\ell_1>\ldots>\ell_n>0$ satisfies \eqref{lambdaconditioneq}. Set $k=\ell_1+1$. By Lemma \ref{scalarKtypeslemma}, the $K$-type $\rho_k$ appears in $\pi$ with multiplicity $1$. Let $w_\lambda$ be a vector spanning this one-dimensional $K$-type. We choose $w=w_\lambda$ as our vector in the zeta integral $Z(s,f,w)$.

To explain our choice of $f$, let $J(s)$ be the degenerate principal series representation of $G_{4n}(\R)$ defined in Lemma \ref{degprincserhollemma} (hence, we replace $n$ by $2n$ in \eqref{degprincserhollemmaeq1}). We see from \eqref{Ichisaltnoteq} that $I(\sgn^k,s)$ equals $J((2n+1)(s+\frac12))$ for appropriate $\varepsilon\in\{0,1\}$. Let $f_k(\cdot,s)$ be the vector spanning the $K$-type $K^{(2n)}\ni g\longmapsto j(g,I)^{-k}$ and normalized by $f_k(1,s)=1$. Explicitly,
\begin{equation}\label{degprincserhollemmaeq3c}
 f_k(\mat{A}{*}{}{u\,^t\!A^{-1}}g,s)= \sgn^k(\det(A))\,|u^{-n}\det(A)|^{(2n+1)(s+\frac12)}\,j(g,I)^{-k}
\end{equation}
for $A\in\GL_{2n}(\R)$, $u\in\R^\times$ and $g\in K^{(2n)}$. Then $f=f_k$ is the section which we will put in our local archimedean integral $Z(s,f,w)$.

Thus consider $Z(s,f_k,w_\lambda)$. By Proposition \ref{interopprop} i), this integral is a vector in $\pi$. The observation \eqref{Qninveq2}, together with the transformation properties of $f_k$, imply that for $g \in K$
\begin{equation}\label{Zfkwkeq1}
 \pi(g)Z(s,f_k,w_\lambda)=j(g,I)^{-k}Z(s,f_k,w_\lambda).
\end{equation}
Since the $K$-type $\rho_k$ occurs only once in $\pi$, it follows that
\begin{equation}\label{Zfkwkeq2}
 Z(s,f_k,w_\lambda)=B_\lambda(s)w_\lambda
\end{equation}
for a constant $B_\lambda(s)$ depending on $s$ and the Harish-Chandra parameter $\lambda$. The rest of this section is devoted to calculating $B_\lambda(s)$ explicitly.
\subsubsection*{The scalar minimal $K$-type case}
We first consider the case $\lambda=(k-1)e_1+\ldots+(k-n)e_n$ with $k>n$. Then $\dot\pi_\lambda=\pi_k$, the holomorphic discrete series representation of $G_{2n}(\R)$ with minimal $K$-type $\rho_k$. Let $w_k$ be a vector spanning this minimal $K$-type. Let $\langle\,,\,\rangle$ be an appropriately normalized invariant hermitian inner product on the space of $\pi_k$ such that $\langle w_k,w_k\rangle=1$. As proved in the appendix of \cite{KnightlyLi2016}, we have the following simple formula for the corresponding matrix coefficient,
\begin{equation}\label{matrixcoeffeq}
 \langle\pi_k(h)w_k,w_k\rangle=\begin{cases}
       \displaystyle\frac{\mu_n(h)^{nk/2}\,2^{nk}}{\det{(A+D+i(C-B))^k}}&\text{for }\mu(h)>0,\\[2ex]
       0&\text{for }\mu(h)<0.
                           \end{cases}
\end{equation}
Here, $h=\mat{A}{B}{C}{D}\in G_{2n}(\R)$. We will need the following result.
\begin{lemma}\label{Tintegrallemma}
 For a complex number $z$, let
 \begin{equation}\label{Tintegrallemmaeq1}
  \gamma_n(z)=\int\limits_T\bigg(\prod_{1\leq i<j\leq n}(t_i^2-t_j^2)\bigg)\Big(\prod_{j=1}^nt_j\Big)^{-z-n}\,d\mathbf{t},
 \end{equation}
 where $d\mathbf{t}$ is the Lebesgue measure and
 \begin{equation}\label{Blambdaformulaeq4}
  T=\{(t_1,\ldots,t_n)\in\R^n\::\:t_1>\ldots>t_n>1\}.
 \end{equation}
 Then, for real part of $z$ large enough,
 \begin{equation}\label{gammanlemmaeq2}
  \gamma_n(z)=\prod_{m=1}^{n}(m-1)!\prod_{j=1}^m\frac{1}{z-m-1+2j}.
 \end{equation}
\end{lemma}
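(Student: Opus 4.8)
The plan is to evaluate the integral $\gamma_n(z)$ by a sequence of one-variable integrations, peeling off the variable $t_n$ (the smallest one) at each stage and reducing to a Vandermonde-type integral in one fewer variable. First I would expand the Vandermonde-like product $\prod_{i<j}(t_i^2-t_j^2)$ in a way that isolates the dependence on $t_n$: writing $\prod_{i<j\le n}(t_i^2-t_j^2) = \Big(\prod_{1\le i<j\le n-1}(t_i^2-t_j^2)\Big)\prod_{i=1}^{n-1}(t_i^2-t_n^2)$, so that the innermost integral over $t_n$ runs from $1$ to $t_{n-1}$ and has integrand $\Big(\prod_{i=1}^{n-1}(t_i^2-t_n^2)\Big) t_n^{-z-n}$. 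The key computational step is thus a single integral of the form $\int_1^{u}\prod_{i=1}^{n-1}(t_i^2-t^2)\,t^{-z-n}\,dt$, which after expanding the product is a linear combination of powers $t^{2\ell - z - n}$; each such monomial integrates elementarily, contributing a rational function in $z$ with denominator $\prod$ of linear factors $(z - m - 1 + 2j)$, plus a remainder term depending on the upper limits $t_i$.

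Second, I would organize this so that, after integrating out $t_n$, what remains is a constant (the factor $(n-1)!\prod_{j=1}^n \frac{1}{z-n-1+2j}$, say, up to reindexing) times an integral of exactly the same shape as $\gamma_{n-1}$ but with the parameter $z$ shifted. The natural way to see the clean product formula emerge is induction on $n$: assume \eqref{gammanlemmaeq2} holds for $n-1$, perform the $t_n$-integration, and check that the shifted parameter and the new prefactor combine to give \eqref{gammanlemmaeq2} for $n$. The base case $n=1$ is $\gamma_1(z)=\int_1^\infty t^{-z-1}\,dt = \frac1z$, which matches the claimed formula (the product over $m$ has the single term $m=1$, giving $0!\cdot\frac{1}{z-1-1+2}=\frac1z$). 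For the inductive step one must verify that the ``remainder'' terms — those coming from the lower limit $t=1$ in the $t_n$-integration, which still depend on $t_1,\dots,t_{n-1}$ — either vanish or reassemble correctly; a cleaner route may be to change variables or to integrate the variables in the opposite order (peeling off $t_1$ from $\infty$ first), whichever makes the telescoping most transparent.

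The main obstacle I anticipate is precisely the bookkeeping of these boundary/remainder terms: a naive iterated integration produces, at each stage, both a ``bulk'' contribution with the desired product of linear factors and several ``surface'' contributions in which one of the upper limits $t_{i-1}$ has been substituted in, and one must show all of these ultimately combine into the single product \eqref{gammanlemmaeq2}. The slickest way to handle this is likely to recognize $\gamma_n(z)$ as a Selberg-type integral: after the substitution $x_j = t_j^{-2}$ (so $x_j$ ranges over $0 < x_1 < \ldots < x_n < 1$), the integrand becomes, up to a Jacobian factor, $\prod_{i<j}(x_j - x_i)$ times a product of powers of the $x_j$, i.e.\ a Selberg integral over the simplex, whose value is a known ratio of Gamma functions. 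Extracting \eqref{gammanlemmaeq2} from the Selberg evaluation is then a matter of simplifying $\Gamma$-quotients into the displayed product of $(m-1)!$ and linear factors in $z$, valid for $\Re(z)$ large enough to guarantee convergence at $t_j = \infty$. Either approach — direct induction with careful boundary analysis, or reduction to Selberg's integral — should work; I would present whichever yields the shortest verification.
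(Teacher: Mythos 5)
Your recommended route is exactly what the paper does: its entire proof is the one-line remark that ``after some straightforward variable transformations, our integral reduces to the Selberg integral,'' which is precisely your substitution $x_j=t_j^{-2}$ turning the region into the ordered simplex $0<x_1<\ldots<x_n<1$ and the integrand into $2^{-n}\prod_{i<j}(x_j-x_i)\prod_j x_j^{(z-n)/2-1}$, i.e.\ the Selberg integral with $\gamma=\tfrac12$, $\beta=1$, $\alpha=\tfrac{z-n}{2}$. Your alternative inductive peeling argument is not needed; the Selberg reduction plus simplification of the resulting Gamma quotients is the intended (and complete) proof.
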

\begin{proof}
After some straightforward variable transformations, our integral reduces to the Selberg integral; see \cite{Selberg1944} or \cite{ForresterWarnaar2008}.
\end{proof}

\begin{proposition}\label{scalarminKtypeprop}
 Assume that $k>n$ and $\lambda=(k-1)e_1+\ldots+(k-n)e_n$, so that $\dot\pi_\lambda=\pi_k$. Then
 \begin{equation}\label{scalarminKtypepropeq1}
  B_\lambda(s)=\frac{\pi^{n(n+1)/2}}{\prod_{m=1}^n(m-1)!}\,i^{nk}\,2^{-n(2n+1)s+3n/2}\,\gamma_n\Big((2n+1)s-\frac12+k\Big),
 \end{equation}
 where  $\gamma_n$ is the rational function \eqref{gammanlemmaeq2}.
\end{proposition}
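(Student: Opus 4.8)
The plan is to compute the vector $Z(s,f_k,w_k)$ directly using the $KAK$ integration formula \eqref{KAKintegrationeq}, exploiting the fact that in this scalar minimal $K$-type case we have an explicit matrix coefficient \eqref{matrixcoeffeq}. Since $Z(s,f_k,w_k)=B_\lambda(s)w_k$ by \eqref{Zfkwkeq2}, it suffices to pair both sides with $w_k$; using $\langle w_k,w_k\rangle=1$ we get $B_\lambda(s)=\langle Z(s,f_k,w_k),w_k\rangle$. Writing this out,
\begin{equation}\label{Bstartpt}
 B_\lambda(s)=\int\limits_{\Sp_{2n}(\R)}f_k(Q_n\cdot(h,1),s)\,\langle\pi_k(h)w_k,w_k\rangle\,dh.
\end{equation}
Here I would first use \eqref{Qninveq2} together with the right $K$-equivariance of $f_k$ and the $K$-type property of $w_k$ to see that the integrand transforms in the expected way under $h\mapsto k_1hk_2$, so that the $KAK$ formula \eqref{KAKintegrationeq} collapses the two $K$-integrals to contributing only the constant $\alpha_n$ (the $K$-volumes being $1$), leaving an integral of $\big(\prod_{\lambda\in\Sigma^+}\sinh(\lambda(H))\big)$ times the value of the integrand at $\exp(H)$, $H\in\mathfrak{a}^+$.

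The core computation is then to evaluate the integrand at a diagonal element $a=\exp(H)={\rm diag}(a_1,\dots,a_n,a_1^{-1},\dots,a_n^{-1})$ with $0<a_n<\dots<a_1<1$ (this is the ordering forced by \eqref{posweylchambereq}, where I set $a_i=e^{H_i}$). First, I would compute $f_k(Q_n\cdot(a,1),s)$: since $Q_n\cdot(a,1)Q_n^{-1}$ lies in $P_{4n}$ by iii) of Lemma \ref{lemmaqrprop} with $d$-value $1$, one uses the explicit form \eqref{Qrdefeq} of $Q_n$ to decompose $Q_n\cdot(a,1)$ into its $P_{4n}$-part times a $K^{(2n)}$-part, then applies \eqref{degprincserhollemmaeq3c}. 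This should produce, up to a power of $2$ and the factor $|\det a|^{(2n+1)(s+1/2)}=\big(\prod a_i\big)^{(2n+1)(s+1/2)}$, a factor like $j(\cdot,I)^{-k}$ evaluated at the compact part; the key point is to see that the resulting expression is a monomial in the $a_i$ together with a factor involving $\prod(1+a_i^2)$ or similar coming from the $j$-factor. Second, the matrix coefficient $\langle\pi_k(a)w_k,w_k\rangle$ is evaluated from \eqref{matrixcoeffeq}: with $A=D={\rm diag}(a_i,a_i^{-1})$-type blocks and $B=C=0$, one gets $\mu_n(a)=1$ and $\det(A+D)^{-k}=\prod_i(a_i+a_i^{-1})^{-k}$, so the matrix coefficient is $2^{nk}\prod_i(a_i+a_i^{-1})^{-k}$. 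Third, the root product $\prod_{\lambda\in\Sigma^+}\sinh(\lambda(H))$: from the description of $\Sigma^+$ in \eqref{posrestrictedrootseq}, this is $\prod_{i<j}\sinh(H_j-H_i)\cdot\prod_{i\le j}\sinh(-H_i-H_j)$, which in the variables $a_i=e^{H_i}$ becomes, up to an explicit power of $2$ and sign, $\prod_{i<j}\big(\frac{a_j}{a_i}-\frac{a_i}{a_j}\big)\cdot\prod_{i\le j}\big(\frac{1}{a_ia_j}-a_ia_j\big)$.

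Assembling these three pieces and changing variables from $(a_1,\dots,a_n)$ to $(t_1,\dots,t_n)=(a_1^{-1},\dots,a_n^{-1})$ (so $t_1>\dots>t_n>1$, matching \eqref{Blambdaformulaeq4}), I expect all the $(1+a_i^2)$-type factors and the $\sinh$-product to combine into exactly $\prod_{i<j}(t_i^2-t_j^2)$ times a monomial $\big(\prod_j t_j\big)^{-z-n}$ with $z=(2n+1)s-\frac12+k$, reproducing the Selberg-type integral $\gamma_n(z)$ of Lemma \ref{Tintegrallemma}; the leftover constants — the power of $2$, the factor $i^{nk}$ (arising from rewriting $(a_i+a_i^{-1})$-factors via the $j$-factor normalization, or equivalently tracking $\det(CZ+D)$ at $Z=I$), the constant $\alpha_n$ which by Appendix \ref{KAKmeasureapp} equals $\pi^{n(n+1)/2}/\prod_{m=1}^n(m-1)!$ up to known factors, and the $2$-powers from $\sinh$ — should collect into the prefactor $\frac{\pi^{n(n+1)/2}}{\prod_{m=1}^n(m-1)!}i^{nk}2^{-n(2n+1)s+3n/2}$ in \eqref{scalarminKtypepropeq1}. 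The main obstacle is the bookkeeping of constants: getting the power of $2$ and the $i^{nk}$ exactly right requires careful tracking through the $KAK$-measure normalization (hence the dependence on Appendix \ref{measureapp}), the explicit Iwasawa-type decomposition of $Q_n\cdot(a,1)$, and the precise form of $j(g,I)$ for the compact part; the convergence for $\mathrm{Re}(s)$ large and the identification of the integral with $\gamma_n$ are comparatively routine once the integrand is in the stated monomial-times-Vandermonde form.
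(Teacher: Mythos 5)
Your proposal follows essentially the same route as the paper's proof: pair with $w_k$, insert the explicit matrix coefficient \eqref{matrixcoeffeq}, apply the $KAK$ formula \eqref{KAKintegrationeq} so that the two $K$-integrals collapse, evaluate $f_k(Q_n\cdot(\exp H,1),s)$ by an explicit $NAK$ decomposition (yielding $i^{nk}\prod_j(e^{H_j}+e^{-H_j})^{-(2n+1)(s+1/2)}$), and reduce to the Selberg-type integral $\gamma_n$ of Lemma \ref{Tintegrallemma}, with $\alpha_n$ supplied by Appendix \ref{KAKmeasureapp}. The one step that would fail as written is your final change of variables $t_j=a_j^{-1}=e^{-H_j}$: under that substitution the factors $(a_j+a_j^{-1})$ coming from both the matrix coefficient and from $f_k$ become $(t_j+t_j^{-1})$, not monomials in $t_j$, so the integrand does not take the form $\prod_{i<j}(t_i^2-t_j^2)\,\big(\prod_j t_j\big)^{-z-n}$ required by \eqref{Tintegrallemmaeq1}. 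The correct substitution is $t_j=\cosh(H_j)=\tfrac12(a_j+a_j^{-1})$, which sends the Weyl chamber onto the domain $T$ of \eqref{Blambdaformulaeq4}, turns $\prod_{i<j}\sinh(H_j-H_i)\sinh(-H_i-H_j)$ into $\pm\prod_{i<j}(t_i^2-t_j^2)$, and absorbs the remaining $\prod_j\sinh(H_j)$ into $dt_j=\sinh(H_j)\,dH_j$; one then reads off $z=(2n+1)s-\tfrac12+k$. (Also a minor slip: with the positive system \eqref{posrestrictedrootseq} one has $H_1<\ldots<H_n<0$, hence $a_1<\ldots<a_n<1$, the reverse of the ordering you wrote; the paper in fact switches to the opposite chamber $a_1>\ldots>a_n>0$ at this point, but either choice works.) With the substitution corrected, your argument is the paper's.
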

\begin{proof}
Taking the inner product with $w_k$ on both sides of \eqref{Zfkwkeq2}, we obtain
\begin{equation}\label{Blambdaformulaeq1}
 B_\lambda(s)=\int\limits_{\Sp_{2n}(\R)} f_k(Q_n \cdot (h, 1), s)\langle\pi_k(h)w_k,w_k\rangle\,dh.
\end{equation}
Since the integrand is left and right $K$-invariant, we may apply the integration formula \eqref{KAKintegrationeq}. Thus
\begin{equation}\label{Blambdaformulaeq2}
 B_\lambda(s)=\alpha_n\int\limits_{\mathfrak{a}^+}\bigg(\prod_{\lambda\in\Sigma^+}\sinh(\lambda(H))\bigg)f_k(Q_n \cdot (\exp(H), 1), s)\langle\pi_k(\exp(H))w_k,w_k\rangle\,dH.
\end{equation}
This time we work with the positive system $\Sigma^+=\{e_i-e_j:1\leq i<j\leq n\}\,\cup\,\{e_i+e_j:1\leq i\leq j\leq n\}$, so that
\begin{equation}\label{posweylchambereq2}
 \mathfrak{a}^+=\{{\rm diag}(a_1,\ldots,a_n,-a_1,\ldots,-a_n):a_1>\ldots>a_n>0\}.
\end{equation}
The function $f_k$ in \eqref{Blambdaformulaeq2} can be evaluated as follows,
\begin{align*}
 f_k(Q_n \cdot (\exp(H), 1), s)&=f_k(\left[\begin{smallmatrix}I_n\\&I_n\\&I_n&I_n\\I_n&&&I_n\end{smallmatrix}\right]\left[\begin{smallmatrix}I_n\\&&&I_n\\&&I_n\\&-I_n\end{smallmatrix}\right] \cdot (\exp(H), 1), s)\\
 &=i^{nk}f_k(\left[\begin{smallmatrix}I_n\\&I_n\\&I_n&I_n\\I_n&&&I_n\end{smallmatrix}\right]\left[\begin{smallmatrix}A\\&I_n\\&&A^{-1}\\&&&I_n\end{smallmatrix}\right], s)\qquad( A=\left[\begin{smallmatrix}e^{a_1}\\&\ddots\\&&e^{a_n}\end{smallmatrix}\right])\\
  &=i^{nk}e^{(a_1+\ldots+a_n)(2n+1)(s+\frac12)}f_k(\left[\begin{smallmatrix}I_n\\&I_n\\&A&I_n\\A&&&I_n\end{smallmatrix}\right], s).
\end{align*}
Now use
\begin{equation}\label{SL2NAKeq4b}
 \mat{1}{}{e^{a_j}}{1}=\mat{1}{x}{}{1}\mat{y^{1/2}}{}{}{y^{-1/2}}r(\theta),\qquad r(\theta)=\mat{\cos(\theta)}{\sin(\theta)}{-\sin(\theta)}{\cos(\theta)},
\end{equation}
with
\begin{equation}\label{SL2NAKeq5b}
 x=\frac{e^{a_j}}{1+e^{2a_j}},\qquad y=\frac1{1+e^{2a_j}},\qquad e^{i\theta}=\frac{1-ie^{a_j}}{(1+e^{2a_j})^{1/2}}.
\end{equation}
It follows that
\begin{equation}\label{fkevaleq}
 f_k(Q_n \cdot (\exp(H), 1), s)=i^{nk}\prod_{j=1}^n(e^{a_j}+e^{-a_j})^{-(2n+1)(s+\frac12)}.
\end{equation}
Substituting this and \eqref{matrixcoeffeq} into \eqref{Blambdaformulaeq2}, we get after some simplification

\begin{align*}
 \alpha_n^{-1}B_\lambda(s)
 & =i^{nk}\,2^{-n(2n+1)(s+\frac12)+n}\\ & \times \int\limits_{\mathfrak{a}^+}\prod_{1\leq i<j\leq n}(\cosh(a_i)^2-\cosh(a_j)^2)\prod_{j=1}^n
  \cosh(a_j)^{-(2n+1)s+\frac12-n-k}
  \prod_{j=1}^n\sinh(a_j)\,dH.
\end{align*}
Now introduce the new variables $t_j=\cosh(a_j)$. The domain $\mathfrak{a}^+$ turns into the domain $T$ defined in \eqref{Blambdaformulaeq4}. We get
\begin{align}\label{Blambdaformulaeq5}
 B_\lambda(s)&=\alpha_ni^{nk}\,2^{-n(2n+1)(s+\frac12)+n}\int\limits_T\bigg(\prod_{1\leq i<j\leq n}(t_i^2-t_j^2)\bigg)\Big(\prod_{j=1}^nt_j\Big)^{-(2n+1)s+\frac12-n-k}\,d\mathbf{t}.
\end{align}
Thus our assertion follows from Lemma \ref{Tintegrallemma} and the value of $\alpha_n$ given in \eqref{alphanpropeq1}.
\end{proof}
\subsubsection*{Calculation of $B_\lambda(s)$ for general Harish-Chandra parameter}
Now consider a general $\lambda=\ell_1e_1+\ldots+\ell_ne_n$ for which the condition \eqref{lambdaconditioneq} is satisfied. By \eqref{holdiscserembeddinglemmaeq2}, we may embed $\dot\pi_\lambda$ into
\begin{equation}\label{holdiscserembeddinglemmaeq2b}
    \sgn^k|\cdot|^{\ell_n}\times\ldots\times\sgn^k|\cdot|^{\ell_1}\rtimes|\cdot|^{-\frac12(\ell_1+\ldots+\ell_n)}.
\end{equation}
In this induced model, the weight $k$ vector $w_\lambda$ in $\dot\pi_\lambda$ has the formula
\begin{align}\label{weight-k-vector}
 w_\lambda(\left[\begin{smallmatrix}
  a_1&\cdots&*&*&\cdots&*\\&\ddots&\vdots&\vdots&&\vdots\\&&a_n&*&\cdots&*\\&&&a_0 a_1^{-1}&&\\&&&\vdots&\ddots&\\&&&*&\cdots&a_0a_n^{-1}
 \end{smallmatrix}\right]g)&=\sgn(a_1\cdot\ldots\cdot a_n)^k\,|a_0|^{-\frac12(\ell_1+\ldots+\ell_n)-\frac{n(n+1)}4}\nonumber\\
 &\hspace{10ex}\bigg(\prod_{j=1}^n|a_j|^{\ell_{n+1-j}+n+1-j}\bigg)j(g,I)^{-k}
\end{align}
for $a_1,\ldots,a_n\in \R^\times$ and $g \in K^{(n)}$. Evaluating \eqref{Zfkwkeq2} at $1$, we get
\begin{equation}\label{Blambdaintegraleq}
 B_\lambda(s)=\int\limits_{\Sp_{2n}(\R)}f_k(Q_n\cdot(h,1),s)w_\lambda(h)\,dh.
\end{equation}
Recall the beta function
\begin{equation}\label{betafcteq1}
 B(x,y)=\frac{\Gamma(x)\Gamma(y)}{\Gamma(x+y)}.
\end{equation}
One possible integral representation for the beta function is
\begin{equation}\label{betafcteq2}
 B(x,y)=\int\limits_0^\infty\frac{a^{x-1}}{(a+1)^{x+y}}\,da=2\int\limits_0^\infty\frac{a^{x-y}}{(a+a^{-1})^{x+y}}\,d^\times a\qquad\text{for }{\rm Re}(x),{\rm Re}(y)>0.
\end{equation}
For $s\in\C$ and $m\in\Z$, let
\begin{equation}\label{betamsdef}
 \beta(m,s)=B\Big(\Big(n+\frac12\Big)s+\frac14+\frac{m}2,\Big(n+\frac12\Big)s+\frac14-\frac{m}2\Big).
\end{equation}

\begin{lemma}\label{Blambdanlemma1}
 We have  \begin{equation}\label{Blambdanlemma1eq1}
  B_\lambda(s)=i^{nk}\bigg(\prod_{j=1}^n\beta(\ell_j,s)\bigg)C_k(s),
 \end{equation}
  where
 \begin{equation}\label{Blambdanlemma1eq2}
  C_k(s)=\int\limits_{\tilde N_1}\int\limits_{\tilde N_2}f_k(\left[\begin{smallmatrix}I_n\\&I_n\\&^tV&I_n\\V&X&&I_n\end{smallmatrix}\right],s)\,dX\,dV
 \end{equation}
 depends only on $k=\ell_1+1$. Here, $\tilde N_1$ is the space of upper triangular nilpotent matrices of size $n\times n$, and $\tilde N_2$ is the space of symmetric $n\times n$ matrices.
\end{lemma}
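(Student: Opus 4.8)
The plan is to evaluate the integral \eqref{Blambdaintegraleq} via the Iwasawa decomposition $\Sp_{2n}(\R)=BK$ with respect to the standard Borel subgroup $B$. The key observation is that the $K$-integral trivialises: for $k\in K$ the element $(k,1)\in H_{2n,2n}(\R)$ lies in $K^{(2n)}$ with factor of automorphy $\overline{j(k,i1_n)}=j(k,i1_n)^{-1}$ at $i1_{2n}$, so \eqref{Qninveq2} together with the $K^{(2n)}$-equivariance of $f_k$ (see \eqref{degprincserhollemmaeq3c}) gives $f_k(Q_n\cdot(bk,1),s)=j(k,i1_n)^{k}f_k(Q_n\cdot(b,1),s)$, while \eqref{weight-k-vector} gives $w_\lambda(bk)=j(k,i1_n)^{-k}w_\lambda(b)$. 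The two factors of automorphy cancel and $\vol(K)=1$, so $B_\lambda(s)=\int_B f_k(Q_n\cdot(b,1),s)\,w_\lambda(b)\,d_\ell b$, where $d_\ell b=|\det M|^{-(n+1)}\,dX\,d_\ell m(M)$ is left Haar measure on $B$ in the parameters $b=\left[\begin{smallmatrix}M&X\,{}^t\!M^{-1}\\&{}^t\!M^{-1}\end{smallmatrix}\right]$ ($M$ upper triangular invertible, $X$ symmetric, $d_\ell m(M)$ left Haar on the upper triangular matrices); the constant relating all these measures to the normalisation \eqref{Ghaarpropeq1} will be supplied by Appendix \ref{measureapp}. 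By \eqref{weight-k-vector}, $w_\lambda(b)=\sgn(\det M)^k\prod_{j=1}^n|m_{jj}|^{\ell_{n+1-j}+n+1-j}$ for the diagonal entries $m_{jj}$ of $M$.

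\textbf{The local factor $f_k$.} Writing $(b,1)=(b,b)(1,b^{-1})$ and invoking iii) of Lemma \ref{lemmaqrprop} -- which puts $Q_n(b,b)Q_n^{-1}$ into $P_{4n}$ with $d$-value $1$, hence with trivial $\chi\delta_{P_{4n}}^{s+1/2}$-twist -- reduces this to $f_k(Q_n\cdot(1,b^{-1}),s)$. A direct matrix multiplication writes $Q_n\cdot(1,b^{-1})$ explicitly; reading off its lower $2n\times 2n$ block computes $j(Q_n\cdot(1,b^{-1}),i1_{2n})=(-1)^ni^n(\det M)^{-1}\det(1_n+M\,{}^t\!M+iX)$, and combining this with the identity $f_k(g,s)=|j(g,i1_{2n})|^{k-(2n+1)(s+\frac12)}j(g,i1_{2n})^{-k}$ (valid for all $g$, using $|\det P|=|j(g,i1_{2n})|^{-1}$ for the Levi part of the $(P_{4n},K^{(2n)})$-Iwasawa decomposition) yields
\begin{equation*}
 f_k(Q_n\cdot(1,b^{-1}),s)=i^{nk}\sgn(\det M)^k|\det M|^{(2n+1)(s+\frac12)}\det(1_n+M\,{}^t\!M+iX)^{-k}\bigl|\det(1_n+M\,{}^t\!M+iX)\bigr|^{k-(2n+1)(s+\frac12)},
\end{equation*}
the $i^{nk}$ coming, exactly as in the scalar case \eqref{fkevaleq}, from the Weyl-type part of $Q_n$. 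Substituting this and the formula for $w_\lambda(b)$, the sign characters cancel (consistently with the $\sgn$-invariance of $\pi_k$), and $B_\lambda(s)$ becomes $i^{nk}$ times an integral, absolutely convergent for $\Re(s)\gg 0$, over $(M,X)$.

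\textbf{Separating the $X$-integral and reducing to a matrix identity.} I then change variables $X\mapsto(1_n+M\,{}^t\!M)^{1/2}X(1_n+M\,{}^t\!M)^{1/2}$ -- legitimate because $1_n+M\,{}^t\!M$ is positive definite, with Jacobian $\det(1_n+M\,{}^t\!M)^{(n+1)/2}$ -- so that $\det(1_n+M\,{}^t\!M+iX)=\det(1_n+M\,{}^t\!M)\det(1_n+iX)$ and the integral factors as $\bigl(\int\det(1_n+iX)^{-k}|\det(1_n+iX)|^{k-(2n+1)(s+\frac12)}\,dX\bigr)$ times an integral over $M$ alone. The same substitution inside \eqref{Blambdanlemma1eq2} -- after recording, by reading off its lower block, that $f_k\bigl(\left[\begin{smallmatrix}I_n\\&I_n\\&{}^tV&I_n\\V&X&&I_n\end{smallmatrix}\right],s\bigr)=\det(1_n+V\,{}^t\!V+iX)^{-k}|\det(1_n+V\,{}^t\!V+iX)|^{k-(2n+1)(s+\frac12)}$ -- factors the \emph{same} $X$-integral out of $C_k(s)$, leaving $\int_{\tilde N_1}\det(1_n+V\,{}^t\!V)^{(n+1)/2-(2n+1)(s+\frac12)}\,dV$. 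Thus \eqref{Blambdanlemma1eq1} is reduced to the matrix identity
\begin{equation*}
 \int_{M}\det(1_n+M\,{}^t\!M)^{\frac{n+1}2-(2n+1)(s+\frac12)}\prod_{j=1}^n|m_{jj}|^{(2n+1)(s+\frac12)+\ell_{n+1-j}-j}\,d_\ell m(M)=\Bigl(\prod_{j=1}^n\beta(\ell_j,s)\Bigr)\int_{\tilde N_1}\det(1_n+V\,{}^t\!V)^{\frac{n+1}2-(2n+1)(s+\frac12)}\,dV,
\end{equation*}
with $M$ running over upper triangular invertible matrices. This I would prove by induction, peeling the diagonal entries of $M$ off one at a time: decomposing $M$ and $1_n+M\,{}^t\!M$ along the last nonzero-diagonal row and column, a rescaling of the adjacent off-diagonal block absorbs that diagonal entry out of the determinant, and the remaining one-dimensional integral in $m_{jj}$ becomes -- after $\tau=m_{jj}^2$ -- the Euler beta integral \eqref{betafcteq2}, producing $\beta(\ell_j,s)$ and leaving an integral of the same shape with one fewer nonzero diagonal; after $n$ steps one arrives at the integral over $\tilde N_1$.

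\textbf{Main obstacle.} The hard part is this last disentangling: each $m_{jj}$ occurs simultaneously in the explicit monomial and inside $\det(1_n+M\,{}^t\!M)$, so the factorisation into $n$ beta integrals is not immediate and demands the successive block decompositions and rescalings above, with careful Jacobian bookkeeping at every stage; the case $n=1$, where the single substitution $X\mapsto(1+m_{11}^2)X$ achieves everything at once, is a useful guide to the general pattern. A secondary, purely routine, task is nailing down the normalisation constant relating $d_\ell b$ and the Lebesgue measures on the unipotent pieces to the Haar measure of \eqref{Ghaarpropeq1}, which is the role of Appendix \ref{measureapp}.
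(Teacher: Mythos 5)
Your reduction is correct but follows a genuinely different route from the paper's. Both proofs start from the Iwasawa decomposition and end at the same $\tilde N_1\times\tilde N_2$ integral, but the paper never evaluates $f_k(Q_n\cdot(b,1),s)$ as a closed-form determinant: it pushes the diagonal torus through $Q_n$ entrywise via the $2\times2$ identities \eqref{Blambdanlemma1eq7}, moves the resulting rotation block past the unipotent part by the conjugation identity \eqref{Blambdanlemma1eq12}, and computes the Jacobian of $V\mapsto Z$, after which the $A$-integral separates into beta integrals automatically. You instead compute the cocycle explicitly --- your value $j(Q_n\cdot(1,b^{-1}),i1_{2n})=(-i)^n\det(M)^{-1}\det(1_n+M\,{}^t\!M+iX)$, the identity $f_k(g,s)=|j(g,i1_{2n})|^{k-(2n+1)(s+\frac12)}j(g,i1_{2n})^{-k}$, and the congruence substitution factoring the same $X$-integral out of both $B_\lambda(s)$ and $C_k(s)$ all check out --- and concentrate the entire difficulty in one real-variable determinant identity over the triangular group. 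The trade-off is that your version is more elementary and classical, but the paper's conjugation trick makes the separation of variables structural rather than something to be proved by hand.

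The one step that needs more care than your sketch suggests is precisely that determinant identity. A rescaling of the off-diagonal entries by functions of the diagonal entries alone does \emph{not} absorb the $m_{jj}$ out of $\det(1_n+M\,{}^t\!M)$ for $n\geq3$: with $n=3$, $m_{12}=u$, $m_{23}=t$ and the substitution $u=\sqrt{(1+a^2)(1+b^2)}\,u'$, $t=\sqrt{(1+b^2)(1+c^2)}\,t'$, one finds $\det(1_3+M\,{}^t\!M)=(1+a^2)(1+b^2)(1+c^2)\bigl(1+u'^2+t'^2+(1+b^2)u'^2t'^2\bigr)$, whereas $\det(1_3+V\,{}^t\!V)=(1+u'^2)(1+t'^2)$; the stray $(1+b^2)$ ruins the factorisation. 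What does work is the Schur-complement form
\begin{equation*}
 \det(1_n+M\,{}^t\!M)=\det(1_{n-1}+M'\,{}^t\!M')\bigl(1+a^2+{}^tu\,(1_{n-1}+{}^t\!M'M')^{-1}u\bigr),\qquad M=\mat{a}{{}^tu}{0}{M'},
\end{equation*}
followed by the \emph{matrix-valued} substitution $u\mapsto\sqrt{1+a^2}\,(1_{n-1}+{}^t\!M'M')^{1/2}u'$. This does separate the $a$-integral into the beta integral \eqref{betafcteq2}, but the leftover is not of the same shape with one fewer diagonal: the Jacobian shifts the exponent of $\det(1_{n-1}+M'\,{}^t\!M')$ by $\tfrac12$ at each step and splits off a Euclidean factor $\int_{\R^{n-1}}(1+{}^tu'u')^{c}\,du'$, so you must run the identical peeling on $\int_{\tilde N_1}\det(1_n+V\,{}^t\!V)^{c}\,dV$ and match these side factors on the two sides. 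With that correction the induction closes (the cases $n=1,2$ reproduce $\prod_j\beta(\ell_j,s)$ exactly), and the remaining normalisation of the triangular-group measure against \eqref{Ghaarpropeq1} is, as you say, the content of Appendix \ref{measureapp}.
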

\begin{proof}
In this proof we write $f_k(h)$ instead of $f_k(h,s)$ for simplicity. We will use the Iwasawa decomposition to calculate the integral \eqref{Blambdaintegraleq}. By Proposition \ref{Iwasawameasureprop}, the relevant integration formula for right $K^{(n)}$-invariant functions $\varphi$ is
\begin{equation}\label{Sp2niwasawainteq2}
 \int\limits_{\Sp_{2n}(\R)}\varphi(h)\,dh=2^n\int\limits_A\int\limits_N\varphi(an)\,dn\,da,
\end{equation}
where $N$ is the unipotent radical of the Borel subgroup, $dn$ is the Lebesgue measure, $A=\{{\rm diag}(a_1,\ldots,a_n,a_1^{-1},\ldots,a_n^{-1})\,:\,a_1,\ldots,a_n>0\}$, and $da=\frac{da_1}{a_1}\ldots\frac{da_n}{a_n}$. Thus, using \eqref{weight-k-vector},
\begin{align}\label{Blambdanlemma1eq4}
 B_\lambda(s)&=2^n\int\limits_A\int\limits_Nf_k(Q_n\cdot(an,1))w_\lambda(an)\,dn\,da\nonumber\\
 &=2^n\int\limits_A\int\limits_N\bigg(\prod_{j=1}^na_j^{\ell_{n+1-j}+n+1-j}\bigg)f_k(Q_n\cdot(an,1))\,dn\,da.
\end{align}
We have $N=N_1N_2$, where
\begin{equation}\label{Blambdanlemma1eq5}
 N_1=\{\mat{U}{}{}{^tU^{-1}}:U\text{ upper triangular unipotent}\},\qquad N_2=\{\mat{I_n}{X}{}{I_n}:X\text{ symmetric}\}.
\end{equation}
Write further $a=\mat{D}{}{}{D^{-1}}$ with $D={\rm diag}(a_1,\ldots,a_n)$. Then
\begin{align}\label{Blambdanlemma1eq6}
 f_k(Q_n\cdot(an,1))&=f_k(\left[\begin{smallmatrix}I_n\\&I_n\\&I_n&I_n\\I_n&&&I_n\end{smallmatrix}\right]\left[\begin{smallmatrix}I_n\\&&&I_n\\&&I_n\\&-I_n\end{smallmatrix}\right]\left[\begin{smallmatrix}D\\&I_n\\&&D^{-1}\\&&&I_n\end{smallmatrix}\right]\left[\begin{smallmatrix}U\\&I_n\\&&^tU^{-1}\\&&&I_n\end{smallmatrix}\right]\left[\begin{smallmatrix}I_n&&-X\\&I_n\\&&I_n\\&&&I_n\end{smallmatrix}\right])\nonumber\\
 &=i^{nk}|a_1\cdot\ldots\cdot a_n|^{(2n+1)(s+\frac12)}f_k(\left[\begin{smallmatrix}I_n\\&I_n\\&D&I_n\\D&&&I_n\end{smallmatrix}\right]\left[\begin{smallmatrix}U\\&I_n\\&&^tU^{-1}\\&&&I_n\end{smallmatrix}\right]\left[\begin{smallmatrix}I_n&&-X\\&I_n\\&&I_n\\&&&I_n\end{smallmatrix}\right]).
\end{align}
We will use the identity
\begin{equation}\label{Blambdanlemma1eq7}
 \mat{1}{}{a_j}{1}=\mat{1}{x_j}{}{1}\mat{y_j^{1/2}}{}{}{y_j^{-1/2}}r(\theta_i)
\end{equation}
with
\begin{equation}\label{Blambdanlemma1eq8}
 x_j=\frac{a_j}{1+a_j^2},\qquad y_j=\frac1{1+a_j^2},\qquad e^{i\theta_j}=\frac{1-ia_j}{(1+a_j^2)^{1/2}}
\end{equation}
Let
\begin{equation}\label{Blambdanlemma1eq9}
 Y=\left[\begin{smallmatrix}y_1^{1/2}\\&\ddots\\&&y_n^{1/2}\end{smallmatrix}\right],\qquad
 C=\left[\begin{smallmatrix}\cos(\theta_1)\\&\ddots\\&&\cos(\theta_n)\end{smallmatrix}\right],\qquad
 S=\left[\begin{smallmatrix}\sin(\theta_1)\\&\ddots\\&&\sin(\theta_n)\end{smallmatrix}\right].
\end{equation}
Then
\begin{equation}\label{Blambdanlemma1eq10}
 \left[\begin{smallmatrix}I_n\\&I_n\\&D&I_n\\D&&&I_n\end{smallmatrix}\right]=\left[\begin{smallmatrix}Y&&&*\\&Y&*\\&&Y^{-1}\\&&&Y^{-1}\end{smallmatrix}\right]\left[\begin{smallmatrix}C&&&S\\&C&S\\&-S&C\\-S&&&C\end{smallmatrix}\right],
\end{equation}
and thus
\begin{align}\label{Blambdanlemma1eq11}
 f_k(Q_n\cdot(an,1))&=i^{nk}\bigg(\prod_{j=1}^n\frac1{a_j+a_j^{-1}}\bigg)^{(2n+1)(s+\frac12)}\!f_k(\left[\begin{smallmatrix}C&&&S\\&C&S\\&-S&C\\-S&&&C\end{smallmatrix}\right]\!\!\left[\begin{smallmatrix}U\\&I_n\\&&^tU^{-1}\\&&&I_n\end{smallmatrix}\right]\!\!\left[\begin{smallmatrix}I_n&&-X\\&I_n\\&&I_n\\&&&I_n\end{smallmatrix}\right]).
\end{align}
Write $U=I_n+V$, so that $V$ is upper triangular nilpotent. A calculation confirms that
\begin{equation}\label{Blambdanlemma1eq12}
 \left[\begin{smallmatrix}C&&&S\\&C&S\\&-S&C\\-S&&&C\end{smallmatrix}\right]\left[\begin{smallmatrix}U\\&I_n\\&&^tU^{-1}\\&&&I_n\end{smallmatrix}\right]=p\left[\begin{smallmatrix}I_n\\&I_n\\&^tZ&I_n\\Z&&&I_n\end{smallmatrix}\right]\left[\begin{smallmatrix}C&&&S\\&C&S\\&-S&C\\-S&&&C\end{smallmatrix}\right],
\end{equation}
where $Z=-(1+SVS)^{-1}SVC$ and $p=\mat{B}{*}{}{^tB^{-1}}$ with $\det(B)=1$. Hence
\begin{align}\label{Blambdanlemma1eq13}
 f_k(Q_n\cdot(an,1))&=i^{nk}\bigg(\prod_{j=1}^n\frac1{a_j+a_j^{-1}}\bigg)^{(2n+1)(s+\frac12)}\!f_k(\left[\begin{smallmatrix}I_n\\&I_n\\&^tZ&I_n\\Z&&&I_n\end{smallmatrix}\right]\!\left[\begin{smallmatrix}C&&&S\\&C&S\\&-S&C\\-S&&&C\end{smallmatrix}\right]\!\left[\begin{smallmatrix}I_n&&-X\\&I_n\\&&I_n\\&&&I_n\end{smallmatrix}\right]).
\end{align}
Let $\tilde N_1$ be the Euclidean space of upper triangular nilpotent real matrices of size $n\times n$. Then it is an exercise to verify that
\begin{equation}\label{Blambdanlemma1eq14}
 \varphi\longmapsto\int\limits_{\tilde N_1}\varphi(I_n+V)\,dV,
\end{equation}
where $dV$ is the Lebesgue measure, defines a Haar measure on the group of upper triangular unipotent real matrices. (Use the fact that $V\mapsto UV$ defines an automorphism of $\tilde N$ of determinant $1$, for every upper triangular unipotent $U$.)

Therefore, as we integrate \eqref{Blambdanlemma1eq13} over $N_1$, we may treat $V$ as a Euclidean variable. We then have to consider the Jacobian of the change of variables $V\mapsto Z$. It is not difficult to show that this Jacobian is $\prod_{j=1}^n\sin(\theta_j)^{j-n}\cos(\theta_j)^{1-j}$. Substituting from \eqref{Blambdanlemma1eq8}, we find
\begin{equation}\label{Blambdanlemma1eq16}
 \bigg|\prod_{j=1}^n\sin(\theta_j)^{j-n}\cos(\theta_j)^{1-j}\bigg|=\bigg(\prod_{j=1}^na_j\bigg)^{-\frac{n+1}2}\bigg(\prod_{j=1}^na_j^j\bigg)\bigg(\prod_{j=1}^n\frac1{a_j+a_j^{-1}}\bigg)^{\frac{1-n}2}.
\end{equation}
Using the above and some more matrix identities, we get
\begin{align}\label{Blambdanlemma1eq17}
 \int\limits_{N_1}f_k(Q_n\cdot(an_1n_2,1))\,dn_1&=i^{nk}\bigg(\prod_{j=1}^n\frac1{a_j+a_j^{-1}}\bigg)^{(2n+1)s+\frac12}\bigg(\prod_{j=1}^na_j\bigg)^{-n-1}\bigg(\prod_{j=1}^na_j^j\bigg)\nonumber\\
 &\hspace{2ex}\int\limits_{\tilde N_1}\int\limits_{\tilde N_2}f_k(\left[\begin{smallmatrix}I_n\\&I_n\\&^tV&I_n\\V&&&I_n\end{smallmatrix}\right]\left[\begin{smallmatrix}I_n\\&I_n\\&&I_n\\&X&&I_n\end{smallmatrix}\right])\,dX\,dV.
\end{align}
This last integral is the $C_k(s)$ defined in \eqref{Blambdanlemma1eq2}. Going back to \eqref{Blambdanlemma1eq4} and using \eqref{betafcteq2}, we  have
\begin{align}\label{Blambdanlemma1eq22}
 B_\lambda(s)&=2^n\int\limits_A\int\limits_{N_1}\int\limits_{N_2}\bigg(\prod_{j=1}^na_j^{\ell_{n+1-j}+n+1-j}\bigg)f_k(Q_n\cdot(an_1n_2,1))\,dn_2\,dn_1\,da\nonumber\\
 &=2^ni^{nk}\int\limits_A\bigg(\prod_{j=1}^na_j^{\ell_{n+1-j}}\bigg)\bigg(\prod_{j=1}^n\frac1{a_j+a_j^{-1}}\bigg)^{(2n+1)s+\frac12}\,da\cdot C_k(s)\nonumber\\
   &=i^{nk}\prod_{j=1}^n\bigg(B\Big(\Big(n+\frac12\Big)s+\frac14+\frac{\ell_j}2,\Big(n+\frac12\Big)s+\frac14-\frac{\ell_j}2\Big)\bigg)\cdot C_k(s).
\end{align}
This concludes the proof.
\end{proof}

\begin{lemma}\label{Blambdanlemma2}
 The function $C_k(s)$ defined in \eqref{Blambdanlemma1eq2} is given by
 \begin{equation}\label{Blambdanlemma2eq1}
  C_k(s)=\frac{\pi^{n(n+1)/2}}{\prod_{m=1}^{n}(m-1)!}2^{-n(2n+1)s+3n/2}\,\frac{\gamma_n((2n+1)s-\frac12+k)}{\prod_{j=1}^n\beta(k-j,s)},
 \end{equation}
  where $\gamma_n$ is the rational function from Lemma \ref{Tintegrallemma}, and $\beta(m,s)$ is defined in \eqref{betamsdef}.
\end{lemma}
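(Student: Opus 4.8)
The proof I would give is short: it extracts $C_k(s)$ by comparing the general formula of Lemma \ref{Blambdanlemma1} against the closed-form evaluation of Proposition \ref{scalarminKtypeprop} in the special case of the scalar minimal $K$-type. The key point exploited is that $C_k(s)$ in \eqref{Blambdanlemma1eq2} depends only on $k$ and not on the full Harish-Chandra parameter $\lambda$.

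First I would specialize to $\lambda_0 = (k-1)e_1+\ldots+(k-n)e_n$, i.e. $\ell_j = k-j$ for $1\le j\le n$. This $\lambda_0$ satisfies \eqref{lambdaconditioneq}, since consecutive differences $\ell_j-\ell_{j-1}=-1$ are odd, and it is the Harish-Chandra parameter of $\pi_k$, so $\dot\pi_{\lambda_0}=\pi_k$. Hence both Lemma \ref{Blambdanlemma1} and Proposition \ref{scalarminKtypeprop} apply to $\lambda_0$. From Lemma \ref{Blambdanlemma1} we get
$$
 B_{\lambda_0}(s)=i^{nk}\Big(\prod_{j=1}^n\beta(k-j,s)\Big)C_k(s),
$$
with the same function $C_k(s)$ defined in \eqref{Blambdanlemma1eq2}, while Proposition \ref{scalarminKtypeprop} gives the independent evaluation
$$
 B_{\lambda_0}(s)=\frac{\pi^{n(n+1)/2}}{\prod_{m=1}^n(m-1)!}\,i^{nk}\,2^{-n(2n+1)s+3n/2}\,\gamma_n\Big((2n+1)s-\tfrac12+k\Big).
$$

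Equating these two expressions, cancelling the common factor $i^{nk}$, and dividing by $\prod_{j=1}^n\beta(k-j,s)$ yields precisely \eqref{Blambdanlemma2eq1}. The division is legitimate because for ${\rm Re}(s)$ large the quantity $x+y=(2n+1)s+\tfrac12$ appearing in \eqref{betamsdef} is not a non-positive integer and the individual arguments have positive real part, so each $\beta(k-j,s)=B\big((n+\tfrac12)s+\tfrac14+\tfrac{k-j}2,(n+\tfrac12)s+\tfrac14-\tfrac{k-j}2\big)$ is a nonzero finite number; the resulting identity of meromorphic functions in $s$ then propagates to all of $\C$ by analytic continuation.

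There is essentially no obstacle here, since the substantive work has already been carried out: Lemma \ref{Blambdanlemma1} isolated the $\lambda$-independent factor $C_k(s)$ via the Iwasawa-coordinate computation, and Proposition \ref{scalarminKtypeprop} supplied the explicit value of $B_\lambda(s)$ in the scalar case via the $KAK$ integral and the Selberg integral of Lemma \ref{Tintegrallemma}. The only mild care needed is the bookkeeping of the nonvanishing of the beta factors and the observation that $C_k(s)$, being given by the absolutely convergent integral \eqref{Blambdanlemma1eq2} for ${\rm Re}(s)$ large, is genuinely a function of $k$ alone, so that knowing it for one $\lambda$ with a given $k_1=k$ determines it for all of them.
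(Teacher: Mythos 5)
Your proof is correct and is essentially identical to the paper's: both specialize to $\lambda=(k-1)e_1+\ldots+(k-n)e_n$, equate the expression for $B_\lambda(s)$ from Lemma \ref{Blambdanlemma1} with the explicit evaluation in Proposition \ref{scalarminKtypeprop}, and solve for $C_k(s)$. Your additional remarks (verifying \eqref{lambdaconditioneq} for this $\lambda$, the nonvanishing of the $\beta(k-j,s)$ for ${\rm Re}(s)$ large, and the $\lambda$-independence of $C_k(s)$) are accurate but left implicit in the paper.
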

\begin{proof}
Consider $\lambda=(k-1)e_1+\ldots+(k-n)e_n$. Then
\begin{equation}\label{Blambdanlemma2eq2}
  B_\lambda(s)=\frac{\pi^{n(n+1)/2}}{\prod_{m=1}^n(m-1)!}\,i^{nk}\,2^{-n(2n+1)s+3n/2}\,\gamma_n\Big((2n+1)s-\frac12+k\Big),
\end{equation}
by Proposition \ref{scalarminKtypeprop}. On the other hand,
\begin{equation}\label{Blambdanlemma2eq3}
  B_\lambda(s)=i^{nk}\,\bigg(\prod_{j=1}^n\beta(k-j,s)\bigg)C_k(s)
\end{equation}
by Lemma \ref{Blambdanlemma1}. The assertion follows by comparing the two expressions.
\end{proof}

\begin{proposition}\label{archzetaprop}
 Let $\lambda=\ell_1e_1+\ldots+\ell_ne_n$, $\ell_1>\ldots>\ell_n>0$, be a Harish-Chandra parameter for which the condition \eqref{lambdaconditioneq} is satisfied. Denote $k_j=\ell_j+j$ and $k=k_1$. Let $\pi=\dot\pi_\lambda$ be the corresponding holomorphic discrete series representation of $G_{2n}(\R)$  and let $w_\lambda$ be a vector in $\pi$ spanning the $K$-type $\rho_k$. Let $f_k$ be the element of $I(\sgn^k,s)$ given by \eqref{degprincserhollemmaeq3c}. Then
 $$
  Z(s,f_k,w_\lambda)=i^{nk}\,\pi^{n(n+1)/2} A_{\mathbf{k}}((2n+1)s- 1/2) \, w_\lambda
 $$
 with  the function $A_{\mathbf{k}}(z)$ is defined as
 \begin{equation}\label{archzetapropeq2}
  A_{\mathbf{k}}(z)=2^{-n(z-1)}\bigg(\prod_{j=1}^n
  \prod_{i=1}^j\frac{1}{z+k-1-j+2i}\bigg)\bigg(\prod_{j=1}^n
  \prod_{i=0}^{\frac{k-k_j}2-1}\frac{z-(k-1-j-2i)}{z+(k-1-j-2i)}\bigg).
 \end{equation}
\end{proposition}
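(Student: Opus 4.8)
The plan is to deduce the proposition directly from Lemma \ref{Blambdanlemma1} and Lemma \ref{Blambdanlemma2}; once these are in hand all that remains is a formal manipulation of beta and gamma factors. By \eqref{Zfkwkeq2} we already know that $Z(s,f_k,w_\lambda)=B_\lambda(s)w_\lambda$, so it suffices to identify the scalar $B_\lambda(s)$. Substituting the value of $C_k(s)$ from Lemma \ref{Blambdanlemma2} into the formula of Lemma \ref{Blambdanlemma1} yields
\[
 B_\lambda(s)=i^{nk}\,\frac{\pi^{n(n+1)/2}}{\prod_{m=1}^n(m-1)!}\,2^{-n(2n+1)s+3n/2}\,\gamma_n\!\Big((2n+1)s-\tfrac12+k\Big)\prod_{j=1}^n\frac{\beta(\ell_j,s)}{\beta(k-j,s)},
\]
and the whole task reduces to showing that everything here apart from the prefactor $i^{nk}\pi^{n(n+1)/2}$ equals $A_{\mathbf{k}}\big((2n+1)s-\tfrac12\big)$.

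Write $z=(2n+1)s-\tfrac12$. I would first match the elementary pieces. Since $-n(2n+1)s+\tfrac{3n}2=-n(z-1)$, the power of $2$ above is exactly the $2^{-n(z-1)}$ appearing in \eqref{archzetapropeq2}. Next, expanding the definition \eqref{gammanlemmaeq2} of $\gamma_n$ at the point $z+k$ and cancelling the common factor $\prod_{m=1}^n(m-1)!$, a relabelling of the two product indices (outer $m\mapsto j$, inner $j\mapsto i$) identifies $\gamma_n(z+k)/\prod_m(m-1)!$ with $\prod_{j=1}^n\prod_{i=1}^j(z+k-1-j+2i)^{-1}$, which is precisely the first double product in \eqref{archzetapropeq2}.

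The substantive step is to identify $\prod_{j=1}^n\beta(\ell_j,s)/\beta(k-j,s)$ with the second double product in \eqref{archzetapropeq2}. Since $(n+\tfrac12)s+\tfrac14=\tfrac{z+1}2$, the definition \eqref{betamsdef} and \eqref{betafcteq1} give $\beta(m,s)=\Gamma(\tfrac{z+1+m}2)\Gamma(\tfrac{z+1-m}2)/\Gamma(z+1)$, so, using $\ell_j=k_j-j$,
\[
 \frac{\beta(\ell_j,s)}{\beta(k-j,s)}=\frac{\Gamma\big(\tfrac{z+1+k_j-j}2\big)}{\Gamma\big(\tfrac{z+1+k-j}2\big)}\cdot\frac{\Gamma\big(\tfrac{z+1-k_j+j}2\big)}{\Gamma\big(\tfrac{z+1-k+j}2\big)}.
\]
This is the one place where hypothesis \eqref{lambdaconditioneq} is used: it forces $k-k_j$ to be a non-negative even integer, so, setting $N=\tfrac{k-k_j}2\in\Z_{\ge0}$ and applying the identity $\Gamma(x+N)=\Gamma(x)\prod_{i=0}^{N-1}(x+i)$ to each quotient, the ratio collapses to a product of $N$ linear factors. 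The two powers of $2$ produced in this way cancel, and after a reindexing of the product (so that the denominator factors appear in the standard order) one checks the numerator factors agree term by term and the denominator factors agree as a set, giving exactly $\prod_{i=0}^{N-1}\frac{z-(k-1-j-2i)}{z+(k-1-j-2i)}$. Multiplying over $j$ and reassembling all the pieces finishes the proof; note that for $j=1$ one has $k_1=k$, hence $N=0$ and an empty product, consistent with $\ell_1=k-1$ and $\beta(\ell_1,s)/\beta(k-1,s)=1$.

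I do not expect a genuine obstacle: given Lemmas \ref{Blambdanlemma1} and \ref{Blambdanlemma2}, this is bookkeeping. The only things requiring care are the index juggling in the two Gamma quotients and the observation that \eqref{lambdaconditioneq} — equivalently, all $k_i$ of the same parity — is precisely what turns the ratio of Gamma functions into a rational function, i.e.\ what makes $A_{\mathbf{k}}(z)$, and hence $B_\lambda(s)$, rational in $z$.
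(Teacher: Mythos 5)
Your proposal is correct and follows essentially the same route as the paper: combine Lemmas \ref{Blambdanlemma1} and \ref{Blambdanlemma2} to get the closed form for $B_\lambda(s)$, then match the power of $2$, the $\gamma_n$ factor, and the ratio $\prod_j\beta(\ell_j,s)/\beta(k-j,s)$ against \eqref{archzetapropeq2}. The only (cosmetic) difference is that you expand the beta functions into Gamma functions and apply $\Gamma(x+N)=\Gamma(x)\prod_{i=0}^{N-1}(x+i)$, whereas the paper uses the equivalent shift identity $B(x+m,y-m)=B(x,y)\prod_{i=0}^{m-1}\frac{x+i}{y-i-1}$ directly.
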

\begin{proof}
By Lemma \ref{Blambdanlemma1} and Lemma \ref{Blambdanlemma2}, $Z(s,f_k,w_\lambda) = B_\lambda(s) w_\lambda$ with
\begin{equation}\label{archzetapropeq3}
  B_\lambda(s)=i^{nk}\,\frac{\pi^{n(n+1)/2}}{\prod_{m=1}^{n}(m-1)!}2^{-n(2n+1)s+3n/2}\,\gamma_n\Big((2n+1)s-\frac12+k\Big)\prod_{j=1}^n\frac{\beta(\ell_j,s)}{\beta(k-j,s)}.
\end{equation}
Inductively one confirms the identity
\begin{equation}\label{archzetapropeq4}
 B(x+m,y-m)=B(x,y)\prod_{i=0}^{m-1}\frac{x+i}{y-i-1}
\end{equation}
for any integer $m\geq0$. Use the abbreviation $t=(n+\frac12)s+\frac14$. Applying the above formula with $m=\frac{k-k_j}2$, which by \eqref{lambdaconditioneq} is a non-negative integer, and replacing $i$ by $m-1-i$, we get
\begin{equation}
 \frac{\beta(\ell_j,s)}{\beta(k-j,s)}
=\prod_{i=0}^{m-1}\frac{t-\frac{\ell_j}2-m+i}{t+\frac{\ell_j}2+m-1-i}.
\end{equation}
The result follows by using formula \eqref{gammanlemmaeq2} for $\gamma_n$.\end{proof}
\begin{remark}\label{rem:Akrational}
Using \eqref{archzetapropeq2}, one can check that $A_\mathbf{k}(t)$ is a non-zero rational number for any integer $t$ satisfying $0 \le t \le k_n-n$.
\end{remark}
\section{The global integral representation}\label{global-int-section}
\subsection{The main result}\label{global-int-section-main}
Consider the global field $F = \Q$ and its ring of adeles $\A = \A_\Q$. All the results are easily generalizable to a totally real number field. Let $\pi\cong\otimes \pi_p$ be a cuspidal automorphic representation of $G_{2n}(\A)$. We assume that $\pi_\infty$ is a holomorphic discrete series representation $\pi_{\mathbf{k}}$ with ${\mathbf{k}}=k_1e_1+\ldots+k_ne_n$, where $k_1\ge\ldots\ge k_n>n$ and all $k_i$ have the same parity. (From now on it is more convenient to work with the minimal $K$-type $\mathbf{k}$ rather than the Harish-Chandra parameter $\lambda$.) We set $k=k_1$. Let $\chi=\otimes\chi_p$ be a character of $\Q^\times \bs \A^\times$ such that $\chi_\infty = \sgn^k$. Let $N=\prod_{p|N} p^{m_p}$ be an integer such that \begin{itemize}
\item For each finite prime $p \nmid N$ both $\pi_p$ and $\chi_p$ are unramified.
 \item For a  prime $p |N$, we have $\chi_p |_{(1+p^{m_p}\Z_p) \cap \Z_p^\times} = 1$ and $\pi_p$ has a vector $\phi_p$ that is right invariant under the principal congruence subgroup $\Gamma_{2n}(p^{m_p})$ of $\Sp_{2n}(\Z_p)$.
\end{itemize}

Let $\phi$ be a cusp form in the space of $\pi$ corresponding to a pure tensor $\otimes \phi_p$, where the local vectors are chosen as follows. For $p \nmid N$ choose $\phi_p$ to be a spherical vector; for a $p |N$ choose $\phi_p$ to be a vector right invariant under $\Gamma_{2n}(p^{m_p})$; and for $p = \infty$ choose  $\phi_\infty$ to be a vector in $\pi_\infty$ spanning the $K_\infty$-type $\rho_k$; see Lemma \ref{scalarKtypeslemma}. Let $f = \otimes f_p \in I(\chi,s)$ be composed of the following local sections. For a finite prime $p \nmid N$ let $f_p$ be the spherical vector normalized by $f_p(1) = 1$; for $p|N$ choose $f_p$ as in Sect.~\ref{s:badplaces} (with the positive integer $m$ of that section equal to the $m_p$ above); and for $p=\infty$, choose $f_\infty$ by \eqref{degprincserhollemmaeq3c}. Define $L^{N}(s,\pi \boxtimes \chi, \varrho_{2n+1}) = \prod_{\substack{ p \nmid N \\ p\ne \infty}} L(s,\pi_p \boxtimes \chi_p, \varrho_{2n+1})$, where the local factors on the right are given by \eqref{Lspichidefeq}.

\begin{theorem}\label{global-thm}
 Let the notation be as above. Then the function $L^N(s,\pi \boxtimes \chi, \varrho_{2n+1})$ can be analytically continued to a meromorphic function of $s$ with only finitely many poles. Furthermore, for all $s \in \C$ and $g\in G_{2n}(\A)$,
 \begin{align}\label{global-int-formula}
  Z(s, f, \phi)(g) =  &\frac{L^N((2n+1)s+1/2,\pi \boxtimes \chi, \varrho_{2n+1})}{L^N((2n+1)(s+1/2), \chi)\prod_{j=1}^nL^N((2n+1)(2s+1)-2j, \chi^2)} \nonumber \\ &\times  i^{nk}\,\pi^{n(n+1)/2}  \bigg(\prod\limits_{p|N}{\rm vol}(\Gamma_{2n}(p^{m_p}))\bigg) A_{\mathbf{k}}((2n+1)s- 1/2)\phi(g),
 \end{align}
 with the  rational function $A_{\mathbf{k}}(z)$ as defined as in Proposition \ref{archzetaprop}.
\end{theorem}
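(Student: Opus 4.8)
The plan is to assemble the local computations of Sections~\ref{unramifiedcalcsec} and~\ref{holdiscsec} through the Basic Identity of Section~3, and then invoke the analytic continuation of the Eisenstein series \eqref{Eis-ser-defn}.

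First I would establish \eqref{global-int-formula} in a right half-plane. Fix $\Re(s)$ large enough that the following all hold simultaneously: the local integrals $Z_v(s;f_v,\phi_v)$ converge absolutely and the Basic Identity applies (Lemma~\ref{convergencelemma}, Proposition~\ref{interopprop}, Theorem~\ref{basicidentity}); the unramified evaluation of Proposition~\ref{unramifiedcalculationprop} is valid at every finite $p\nmid N$; the archimedean evaluation of Proposition~\ref{archzetaprop} is valid; and the Euler product $\prod_{p\nmid N,\,p<\infty}c_p(s)$ converges absolutely, where $c_p(s)$ denotes the ratio of local $L$-factors in \eqref{unramcalceq} (each $c_p(s)\to1$ as $p\to\infty$ for $\Re(s)$ large). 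In that range Theorem~\ref{basicidentity} gives $Z(s;f,\phi)\in V_\pi$ corresponding to the pure tensor $\otimes_v Z_v(s;f_v,\phi_v)$. Substituting $Z_p(s;f_p,\phi_p)=c_p(s)\phi_p$ for $p\nmid N$ (Proposition~\ref{unramifiedcalculationprop}), $Z_p(s;f_p,\phi_p)={\rm vol}(\Gamma_{2n}(p^{m_p}))\phi_p$ for $p\mid N$ (Proposition~\ref{prop:badplaces}), and $Z_\infty(s;f_\infty,\phi_\infty)=i^{nk}\pi^{n(n+1)/2}A_{\mathbf{k}}((2n+1)s-\tfrac12)\phi_\infty$ (Proposition~\ref{archzetaprop}), and using \eqref{Lspichidefeq} together with the definition of $L^N$ to recognize $\prod_{p\nmid N,\,p<\infty}c_p(s)$ as the ratio of partial $L$-functions on the right-hand side of \eqref{global-int-formula}, one obtains $Z(s;f,\phi)=\big(\prod_v c_v(s)\big)\phi$, which is precisely \eqref{global-int-formula}, for all $g\in G_{2n}(\A)$ and all $s$ in this half-plane.

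Next I would deduce the meromorphic continuation. Pick $g_0\in G_{2n}(\A)$ with $\phi(g_0)\neq0$, which exists since $\phi$ is a nonzero cusp form. Evaluating \eqref{global-int-formula} at $g_0$ and solving for $L^N((2n+1)s+\tfrac12,\pi\boxtimes\chi,\varrho_{2n+1})$ expresses it as a quotient whose numerator is $Z(s;f,\phi)(g_0)$ times the denominator abelian $L$-functions occurring in \eqref{global-int-formula}, and whose denominator is $i^{nk}\pi^{n(n+1)/2}\big(\prod_{p\mid N}{\rm vol}(\Gamma_{2n}(p^{m_p}))\big)A_{\mathbf{k}}((2n+1)s-\tfrac12)\phi(g_0)$. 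Here $s\mapsto Z(s;f,\phi)(g_0)$ is meromorphic on $\C$ with only finitely many poles: the integral \eqref{maindefinition} converges away from the poles of the Eisenstein series (as noted in the remarks after \eqref{maindefinition}), and $E(\,\cdot\,,s,f)$ has meromorphic continuation with finitely many poles by the standard theory of Eisenstein series. Each partial abelian $L$-function $L^N((2n+1)(s+\tfrac12),\chi)$ and $L^N((2n+1)(2s+1)-2j,\chi^2)$ differs from a Hecke $L$-function by finitely many Euler factors, hence is meromorphic with at most one pole. And $A_{\mathbf{k}}$ is a nonzero rational function (Remark~\ref{rem:Akrational}), so $A_{\mathbf{k}}((2n+1)s-\tfrac12)$ has only finitely many zeros. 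Therefore the quotient, hence $L^N((2n+1)s+\tfrac12,\pi\boxtimes\chi,\varrho_{2n+1})$ — and via the affine change of variable $s\mapsto(2n+1)s+\tfrac12$ also $L^N(s,\pi\boxtimes\chi,\varrho_{2n+1})$ — extends meromorphically to $\C$ with only finitely many poles. Finally, for each fixed $g\in G_{2n}(\A)$ both sides of \eqref{global-int-formula} are meromorphic functions of $s$ on $\C$ — the left side again by the remarks after \eqref{maindefinition}, the right side by what was just shown — and they coincide on a right half-plane by the first step, so they coincide for all $s$ by analytic continuation, giving \eqref{global-int-formula} in general.

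The argument is essentially bookkeeping of results already proved; the only points needing care are the compatibility of the several ``$\Re(s)$ large'' hypotheses used in the first step (so that the analytic-continuation step concerns the same data), and the observation that dividing through by the possibly-vanishing denominator abelian $L$-functions does not spoil the ``finitely many poles'' conclusion — which is fine since, up to finitely many Euler factors, each of those $L$-functions is meromorphic with at most one pole.
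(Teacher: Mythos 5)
Your proposal is correct and follows essentially the same route as the paper: the identity is first established for $\Re(s)$ large by combining Theorem \ref{basicidentity} with Propositions \ref{unramifiedcalculationprop}, \ref{prop:badplaces} and \ref{archzetaprop}, and then extended to all $s$ via the meromorphic continuation of the Eisenstein series. Your additional bookkeeping (choosing $g_0$ with $\phi(g_0)\neq0$, solving for $L^N$, and tracking the finitely many poles and zeros of the auxiliary factors) just makes explicit what the paper's one-line continuation argument leaves implicit.
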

\begin{proof}
By Theorem \ref{basicidentity}, Proposition \ref{unramifiedcalculationprop}, Proposition \ref{prop:badplaces} and Proposition \ref{archzetaprop}, the equation \eqref{global-int-formula} is true for all $\Re(s)$ sufficiently large. Since the left side defines a meromorphic function of $s$ for each $g$, it follows that the right side can be analytically continued to a meromorphic function of $s$ such that \eqref{global-int-formula} always holds.
\end{proof}
\subsection{A classical reformulation}
We now rewrite the above theorem in classical language.  For any congruence subgroup $\Gamma$ of $\Sp_{2n}(\R)$ with the symmetry property $\mat{-I_n}{}{}{I_n} \Gamma \mat{-I_n}{}{}{I_n} = \Gamma$, let $C^\infty_{k}(\Gamma)$ be the space of smooth functions $F:\:\H_n \to \C$ satisfying
\begin{equation}\label{modformeq1}
 F(\gamma Z)=j(\gamma, Z)^k F(Z)\qquad\text{for all }\gamma\in\Gamma.
\end{equation}
For any $F \in C^\infty_{k}(\Gamma)$, there is an element $\bar{F} \in C^\infty_{k}(\Gamma)$ defined via $\bar{F}(Z) = \overline{F(-\overline{Z})}$. Given functions $F_1$, $F_2$ in $C^\infty_{k}(\Gamma)$, we define the Petersson inner product $\langle F_1, F_2 \rangle$ by
$$
 \langle F_1, F_2 \rangle = \vl(\Gamma \bs \H_n)^{-1}\int\limits_{\Gamma \bs \H_n} \det(Y)^k F_1(Z) \overline{F_2(Z)}\,dZ,
$$
whenever the integral above converges. Above, $dZ$ is any $\Sp_{2n}(\R)$-invariant measure on $\H_n$ (it is equal to $c\det(Y)^{-(n+1)}\,dX\,dY$ for some constant $c$). Note that our definition of the  Petersson inner product does not depend on the normalization of measure (the choice of $c$), and is also not affected by different choices of $\Gamma$. Note also that \begin{equation}\label{innerconj}\langle F_1, F_2 \rangle = \overline{\langle \bar{F_1}, \bar{F_2} \rangle}.\end{equation}

Now, let $\Phi$ be an automorphic form on $G_{2n}(\A)$ such that $\Phi(gh)=j(h,I)^{-k}\Phi(g)$ for all $h\in K_\infty^{(n)}\cong U(n)$. Then we can define a function $F(Z)$ on the Siegel upper half space $\mathbb{H}_n$ by
\begin{equation}\label{Hndescenteq}
 F(Z)=j(g,I)^k\,\Phi(g),
\end{equation}
where $g$ is any element of $\Sp_{2n}(\R)$ with $g(I)=Z$. If $\Gamma_p$ is an open-compact subgroup of $G_{2n}(\Q_p)$ such that $\Phi$ is right invariant under $\Gamma_p$ for all $p$, then it is easy to check that $F \in C^\infty_k(\Gamma)$ where $\Gamma=\Sp_{2n}(\Q)\cap\prod_{p<\infty}\Gamma_p$.

We apply this principle to our Eisenstein series $E(g,s,f)$, where $f$ is the global section constructed in Sect.~\ref{global-int-section-main}. Consider
\begin{equation}\label{classical-Eis-ser-1}
 E_{k,N}^{\chi}(Z,s):=j(g, I)^{k} E\Big(g,\frac{2s}{2n+1}+\frac k{2n+1}-\frac 12, f\Big),
\end{equation}
where $g$ is any element of $\Sp_{4n}(\R)$ with $g(I)=Z$. Since the series defining $E(g,s,f)$ converges absolutely for $\Re(s) > \frac12$, it follows that the series defining $E_{k,N}^{\chi}(Z,s)$  converges absolutely whenever $2\Re(s)+k >2n+1$. By the invariance properties of our local sections and by analytic continuation, it follows that for all $s$ we have $E_{k,N}^{\chi}(Z,s) \in C^\infty_k(\Gamma_{4n}(N))$.

It is instructive to write down the function $E_{k,N}^{\chi}(Z,s)$ classically. Indeed, using the relevant definitions, one can check that for $2\Re(s)+k >2n+1$,
\begin{align}\label{eisensteinserieseq}
 E_{k,N}^{\chi}(Z,s) = \sum_{\gamma = \mat{A}{B}{C}{D} \in (P(\Q)\cap \Sp_{4n}(\Z)) \bs \Sp_{4n}(\Z) } \Big(\prod_{p|N} f_p(\gamma, s) \Big) \det(\Im(\gamma Z))^s \det(CZ+D)^{-k}.
\end{align}
Considering the definition of the functions $f_p(\gamma_p, s)$ in Sect.~\ref{s:badplaces}, it is not difficult to see that
\begin{align}\label{eisensteinserieseq2}
 E_{k,N}^{\chi}(Z,s)=\sum_{\gamma\in (Q_n^{-1}P'(\Z)Q_n \cap\Gamma_{4n}(N))\bs \Gamma_{4n}(N)}\det(\Im(Q_n\gamma Z))^s\,j(Q_n\gamma,Z)^{-k}.
\end{align}

Next, let $F(Z)$ be the function on $\H_n$ corresponding to the automorphic form $\phi$ via \eqref{Hndescenteq}.  Then $F, \bar{F} \in C_k^\infty(\Gamma_{2n}(N))$ and both these functions are rapidly decreasing at infinity. For $Z_1,Z_2\in\H_n$, write $E_{k,N}^{\chi}(Z_1,Z_2,s)$ for $E_{k,N}^{\chi}(\mat{Z_1}{}{}{Z_2},s)$.
Using adelic-to-classical arguments similar to Theorem 6.5.1 of \cite{pullback}, we can now write down the classical analogue of Theorem \ref{global-thm}.
\begin{theorem}\label{classical-integral-repn}
Let $E_{k,N}^{\chi}(Z,s)$ be as defined in \eqref{classical-Eis-ser-1}, and let $F(Z)$, $\bar{F}(Z)$ be as defined above. Then we have the relation
\begin{align*}
  \langle E_{k,N}^{\chi}( \ - \ , Z_2, \frac{n}2 - \frac{k-s}{2}), \bar{F} \rangle &=  \frac{L^N(s,\pi \boxtimes \chi, \varrho_{2n+1})}{L^N(s+n, \chi)\prod_{j=1}^nL^N(2s+2j-2, \chi^2)} \times A_{\mathbf{k}}(s-1)  \\
  & \times \prod_{p|N}{\rm vol}(\Gamma_{2n}(p^{m_p})) \times \frac{ i^{nk}\,\pi^{n(n+1)/2} }{\vol(\Sp_{2n}(\Z) \bs \Sp_{2n}(\R))} \times F(Z_2).
 \end{align*}
\end{theorem}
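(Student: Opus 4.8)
The plan is to deduce Theorem~\ref{classical-integral-repn} from the adelic identity of Theorem~\ref{global-thm} by the standard adelic-to-classical translation, along the lines of Theorem~6.5.1 of \cite{pullback}. First I would fix the change of variables: choose $g_2 \in \Sp_{2n}(\R) \subset \Sp_{2n}(\A)$ (with trivial components away from $\infty$) such that $g_2(iI_n) = Z_2$, and evaluate the formula of Theorem~\ref{global-thm} at $g = g_2$ and at the spectral parameter $s_0 := \frac{s - 1/2}{2n+1}$. Then $(2n+1)s_0 + \tfrac12 = s$, $(2n+1)(s_0 + \tfrac12) = s + n$, and $\{(2n+1)(2s_0+1) - 2j : 1\le j\le n\} = \{2s + 2j - 2 : 1 \le j \le n\}$, so the three $L$-factor families become $L^N(s, \pi\boxtimes\chi, \varrho_{2n+1})$, $L^N(s+n,\chi)$ and $\prod_{j=1}^n L^N(2s+2j-2, \chi^2)$, while $A_{\mathbf k}((2n+1)s_0 - \tfrac12) = A_{\mathbf k}(s-1)$; by \eqref{classical-Eis-ser-1} the Eisenstein parameter $s_0$ corresponds to the classical variable $s' := \frac{n}{2} - \frac{k-s}{2}$, and $\phi(g_2) = j(g_2, iI_n)^{-k} F(Z_2)$ by \eqref{Hndescenteq}.

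The heart of the argument is to re-express the left side $Z(s_0, f, \phi)(g_2)$ classically. Since $\mu_n(g_2) = 1$, the domain in \eqref{maindefinition} is $\Sp_{2n}(\Q)\bs\Sp_{2n}(\A)$. Strong approximation for the simply connected group $\Sp_{2n}$ gives $\Sp_{2n}(\A) = \Sp_{2n}(\Q)\big(\Sp_{2n}(\R)\prod_p \Gamma_p\big)$ with $\Gamma_p = \Gamma_{2n}(p^{m_p})$ for $p\mid N$ and $\Gamma_p = \Sp_{2n}(\Z_p)$ otherwise, and $\Sp_{2n}(\Q)\cap\prod_p\Gamma_p = \Gamma_{2n}(N)$, the principal congruence subgroup of level $N$. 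From \eqref{embedding-defn} one checks that $(\kappa, 1)$ lies in $\Gamma_{4n}(p^{m_p})$ (resp.\ $\Sp_{4n}(\Z_p)$) whenever $\kappa \in \Gamma_p$; as $f = \otimes f_p$ with $f_p$ right invariant under these groups and $\phi$ is right invariant under $\prod_p\Gamma_p$, both $h\mapsto E((h,g_2), s_0, f)$ and $\phi$ are right $\prod_p\Gamma_p$-invariant, so the adelic integral collapses to $\big(\prod_{p\mid N}\vol(\Gamma_{2n}(p^{m_p}))\big)\int_{\Gamma_{2n}(N)\bs\Sp_{2n}(\R)} E((h_\infty, g_2), s_0, f)\,\phi(h_\infty)\,dh_\infty$.

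Next I would translate the integrand. Writing $Z_1 := h_\infty(iI_n)$ with imaginary part $Y_1$, a direct matrix computation from \eqref{embedding-defn} shows that $(h_\infty, g_2)$ carries $iI_{2n}$ to ${\rm diag}(-\overline{Z_1}, Z_2)$ with automorphy factor $\overline{j(h_\infty, iI_n)}\,j(g_2, iI_n)$ — the conjugation and the sign on $Z_1$ come precisely from the $-B_1,-C_1$ in \eqref{embedding-defn}, exactly as in \eqref{garrpullback}. Combining this with \eqref{classical-Eis-ser-1}, \eqref{Hndescenteq} and $|j(h_\infty, iI_n)|^{-2} = \det Y_1$ gives $E((h_\infty, g_2), s_0, f)\,\phi(h_\infty) = j(g_2, iI_n)^{-k}\det(Y_1)^k\,E_{k,N}^\chi(-\overline{Z_1}, Z_2, s')\,F(Z_1)$, a right $K_\infty$-invariant function of $h_\infty$ (the $U(n)$ automorphy factors of the two pieces cancel), so \eqref{Ghaarpropeq1} converts the integral to $j(g_2, iI_n)^{-k}\int_{\Gamma_{2n}(N)\bs\H_n}\det(Y_1)^k E_{k,N}^\chi(-\overline{Z_1}, Z_2, s')\,F(Z_1)\,\det(Y_1)^{-(n+1)}\,dX_1\,dY_1$. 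The substitution $Z_1\mapsto -\overline{Z_1}$, which preserves $\Gamma_{2n}(N)\bs\H_n$ (by the symmetry hypothesis on $\Gamma_{2n}(N)$), $\det Y_1$, and the invariant measure, turns the integrand into $\det(Y_1)^k E_{k,N}^\chi(Z_1, Z_2, s')\,F(-\overline{Z_1})$; since $F(-\overline{Z_1}) = \overline{\bar F(Z_1)}$ by definition of $\bar F$, this integral equals $j(g_2, iI_n)^{-k}\,\vol(\Gamma_{2n}(N)\bs\Sp_{2n}(\R))\,\langle E_{k,N}^\chi(\,-\,, Z_2, s'), \bar F\rangle$. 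Equating with the formula of Theorem~\ref{global-thm}, cancelling the common factor $j(g_2, iI_n)^{-k}$, and using $\vol(\Gamma_{2n}(N)\bs\Sp_{2n}(\R)) = [\Sp_{2n}(\Z):\Gamma_{2n}(N)]\,\vol(\Sp_{2n}(\Z)\bs\Sp_{2n}(\R))$ together with $[\Sp_{2n}(\Z):\Gamma_{2n}(N)] = \prod_{p\mid N}\vol(\Gamma_{2n}(p^{m_p}))^{-1}$ yields exactly the asserted identity — first in the half-plane $2\Re(s)+k > 2n+1$ where the series defining $E_{k,N}^\chi$ converges absolutely, and then for all $s$ by meromorphic continuation, both sides being meromorphic in $s$.

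The only delicate points I foresee are: (i) keeping track of the automorphy factors through the embedding \eqref{embedding-defn}, which is what forces $-\overline{Z_1}$, and hence $\bar F$ rather than $F$, into the statement; and (ii) matching measure normalizations, in particular reconciling the factor $\prod_{p\mid N}\vol(\Gamma_{2n}(p^{m_p}))$ coming out of strong approximation with the one implicit in the normalized Petersson product through the index $[\Sp_{2n}(\Z):\Gamma_{2n}(N)]$. Everything else is routine bookkeeping once Theorem~\ref{global-thm} is available.
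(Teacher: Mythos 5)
Your proposal is correct and is exactly the argument the paper has in mind: the paper itself only says the theorem follows from Theorem \ref{global-thm} "using adelic-to-classical arguments similar to Theorem 6.5.1 of \cite{pullback}", and you have carried out precisely that translation (strong approximation for $\Sp_{2n}$, the change of spectral variable $s_0=\frac{s-1/2}{2n+1}$, the appearance of $-\overline{Z_1}$ and hence $\bar F$ from the embedding \eqref{embedding-defn}, and the volume bookkeeping) with the parameter matches all checking out. No gaps beyond the level of detail the paper itself suppresses.
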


\begin{corollary}\label{criticalcor}Let $r$ be a positive integer satisfying $1\le r \le  k_n-n$, $r \equiv k_n -n \pmod{2}$. Then

 $$ \left \langle E_{k,N}^{\chi}\left(\ - \ , Z_2, \frac{n}2- \frac{k-r}2\right) ,\bar{F} \right \rangle
=  i^{nk} c_{k,r,n,N}  \frac{L^N(r,\pi \boxtimes \chi, \varrho_{2n+1})}{L^N(r+n, \chi)\prod_{j=1}^nL^N(2r+2j-2, \chi^2)} F(Z_2) $$
  where
 $c_{k,r,n,N} = \prod_{p|N}{\rm vol}(\Gamma_{2n}(p^{m_p})) \times \frac{ \pi^{n(n+1)/2} }{\vol(\Sp_{2n}(\Z) \bs \Sp_{2n}(\R))} \times A_{\mathbf{k}}(r-1)$ is a non-zero rational number.

\end{corollary}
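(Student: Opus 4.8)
The plan is to obtain the corollary by specializing Theorem~\ref{classical-integral-repn} to $s=r$ and then identifying the resulting constant as a non-zero rational number. Setting $s=r$ in the displayed identity of Theorem~\ref{classical-integral-repn}, the argument $\frac n2-\frac{k-s}2$ of the Eisenstein series becomes $\frac n2-\frac{k-r}2$, and the three $L$-function arguments $s$, $s+n$, $2s+2j-2$ become $r$, $r+n$, $2r+2j-2$; this already reproduces the left-hand side and the quotient of partial $L$-functions in the statement of the corollary, and the factor $i^{nk}$ likewise matches. To see that the specialization is legitimate, note that the identity of Theorem~\ref{classical-integral-repn} is an identity of meromorphic functions of $s$, and that at $s=r$ its right-hand side is finite: $A_{\mathbf k}(r-1)$ is finite by Remark~\ref{rem:Akrational}, the numerator $L^N(r,\pi\boxtimes\chi,\varrho_{2n+1})$ is regular at $r$ (one checks that $r$ avoids the finite pole set of $L^N(\,\cdot\,,\pi\boxtimes\chi,\varrho_{2n+1})$ supplied by Theorem~\ref{global-thm} for all $r$ in the range $1\le r\le k_n-n$), and the denominator factors $L^N(r+n,\chi)$ and $L^N(2r+2j-2,\chi^2)$ are non-zero since their arguments have real part at least $2$. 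Hence the left-hand side is finite at $s=r$ as well, and the asserted equality follows.

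It then remains to check that
\[
 c_{k,r,n,N}=\prod_{p|N}\vol(\Gamma_{2n}(p^{m_p}))\cdot\frac{\pi^{n(n+1)/2}}{\vol(\Sp_{2n}(\Z)\bs\Sp_{2n}(\R))}\cdot A_{\mathbf k}(r-1)
\]
is a non-zero rational number, which I would do factor by factor. Each $\vol(\Gamma_{2n}(p^{m_p}))$ equals the reciprocal of the positive integer $[\Sp_{2n}(\Z_p):\Gamma_{2n}(p^{m_p})]$ by the normalization of the local Haar measure, hence is a non-zero rational (cf.\ Proposition~\ref{prop:badplaces}). For the factor $A_{\mathbf k}(r-1)$: the hypotheses $1\le r\le k_n-n$ give $0\le r-1\le k_n-n-1<k_n-n$, so Remark~\ref{rem:Akrational} applies with $t=r-1$ and gives $A_{\mathbf k}(r-1)\in\Q^\times$. (The parity hypothesis $r\equiv k_n-n\pmod 2$ is not needed for this, but combined with the assumption that all the $k_i$ have the same parity it forces $n-k+r$ to be even, so that the evaluation point $\frac n2-\frac{k-r}2$ of the Eisenstein series is an integer; this is used in the next section.) Finally, for the volume factor: with the Haar measure on $\Sp_{2n}(\R)$ fixed by \eqref{Ghaarpropeq1}, and since $K$ has volume $1$, one has $\vol(\Sp_{2n}(\Z)\bs\Sp_{2n}(\R))=\vol(\Sp_{2n}(\Z)\bs\H_n)$ with respect to $\det(Y)^{-(n+1)}\,dX\,dY$, and Siegel's volume formula gives
\[
 \vol(\Sp_{2n}(\Z)\bs\H_n)=2\prod_{j=1}^n\frac{(j-1)!\,\zeta(2j)}{\pi^{j}},
\]
which, since each $\zeta(2j)$ is a non-zero rational multiple of $\pi^{2j}$, is a non-zero rational multiple of $\pi^{\sum_{j=1}^n j}=\pi^{n(n+1)/2}$. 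Hence $\pi^{n(n+1)/2}/\vol(\Sp_{2n}(\Z)\bs\Sp_{2n}(\R))\in\Q^\times$, and the product of the three factors is a non-zero rational number.

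I do not expect a genuine obstacle here, as the corollary is essentially a clean specialization of Theorem~\ref{classical-integral-repn}. The only non-formal ingredient is Siegel's computation of $\vol(\Sp_{2n}(\Z)\bs\H_n)$, and this is the step I would treat most carefully — in particular making sure that, in the normalization fixed by \eqref{Ghaarpropeq1}, the power of $\pi$ appearing in the volume is exactly $n(n+1)/2$, so that the $\pi$-powers cancel in $c_{k,r,n,N}$. A secondary point worth a line of justification is the regularity of the right-hand side of Theorem~\ref{classical-integral-repn} at $s=r$, as sketched above.
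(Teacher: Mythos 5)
Your proposal is correct and follows the same route the paper intends: the corollary is a direct specialization of Theorem~\ref{classical-integral-repn} at $s=r$, with the rationality of $c_{k,r,n,N}$ coming from Remark~\ref{rem:Akrational}, the normalization of the $p$-adic measures, and Siegel's formula $\vol(\Sp_{2n}(\Z)\bs\H_n)=2\prod_{j=1}^n(j-1)!\,\zeta(2j)\pi^{-j}\in\pi^{n(n+1)/2}\Q^\times$, exactly as you compute. The paper gives no separate argument, so your write-up (including the careful check that $0\le r-1\le k_n-n$ puts $A_{\mathbf k}(r-1)$ in the range covered by the remark) is precisely the omitted proof.
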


We end this section with an important property of the Eisenstein series $E_{k,N}^{\chi}(Z, \frac{n}2- \frac{k-r}2)$ appearing in Corollary \ref{criticalcor}.  For each positive integer $n$, let $N(\H_{n})$ be the space of nearly holomorphic functions on $\H_n$. By definition, these are the functions which are polynomials in the entries of ${\rm Im}(Z)^{-1}$ with holomorphic functions on $\H_n$ as coefficients. For each discrete subgroup $\Gamma$ of $\Sp_{2n}(\R)$, let $N_k(\Gamma)$ be the space of all functions $F$ in $N(\H_n)$ that satisfy $F(\gamma Z) = j(\gamma, Z)^k F(Z)$ for all $Z \in \H_n$ and $\gamma \in \Gamma$  (if $n=1$, we also need an additional ``no poles at cusps" condition, as explained in \cite{PSS14}). The spaces $N_k(\Gamma)$ are known as nearly holomorphic modular forms of weight $k$ for $\Gamma$; a detailed explication of these spaces for $n=2$ can be found in \cite{PSS14}.

\begin{proposition}\label{propnearholo} Suppose that $k\ge 2n+2$. Then the series defining $E_{k,N}^{\chi}(Z,0)$ is absolutely convergent and defines a holomorphic Siegel modular form of degree $2n$ and weight $k$ with respect to the principal congruence subgroup $\Gamma_{4n}(N)$ of $\Sp_{4n}(\R)$. More generally, let $0 \le m \le \frac{k}2-n-1$ be an integer. Then $E_{k,N}^{\chi}(Z, -m) \in N_k(\Gamma_{4n}(N))$.
\end{proposition}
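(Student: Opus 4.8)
The plan is to obtain the case $m=0$ directly from the convergence of a classical Siegel Eisenstein series, and then to reduce general $m$ to that case by exhibiting $E_{k,N}^{\chi}(\,\cdot\,,-m)$ as the image of a holomorphic Eisenstein series of the lower weight $k-2m$ under an iterated Maass--Shimura operator. First I would treat $m=0$. By \eqref{eisensteinserieseq2} we have, at $s=0$,
$$
 E_{k,N}^{\chi}(Z,0)=\sum_{\gamma\in (Q_n^{-1}P'(\Z)Q_n \cap\Gamma_{4n}(N))\bs \Gamma_{4n}(N)}j(Q_n\gamma,Z)^{-k},
$$
and each summand has absolute value $|j(Q_n\gamma,Z)|^{-k}$, so absolute and locally uniform convergence on $\H_{2n}$ follows from the classical convergence of the degree-$2n$ Siegel Eisenstein series, which holds once the weight exceeds $2n+1$; here $k\ge 2n+2$. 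Local uniform convergence of holomorphic summands makes the sum holomorphic, $\Gamma_{4n}(N)$ permutes the cosets over which one sums, and the cocycle identity for $j$ gives $E_{k,N}^{\chi}(\,\cdot\,,0)\in C^\infty_{k}(\Gamma_{4n}(N))$; being holomorphic in degree $2n\ge 2$ it is a holomorphic Siegel modular form by the Koecher principle, and in particular it lies in $N_k(\Gamma_{4n}(N))$, which settles the case $m=0$ of the second assertion as well.

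Now I would fix an integer $0\le m\le \tfrac k2-n-1$ and write $(f|_l g)(Z):=j(g,Z)^{-l}f(gZ)$ for the weight-$l$ action of $g\in\Sp_{4n}(\R)$ on functions on $\H_{2n}$. Since $k-2m\ge 2n+2$, the value $s=-m$ lies strictly inside the region $2\Re(s)+k>2n+1$ of absolute convergence, so \eqref{eisensteinserieseq2} applies at $s=-m$, giving $E_{k,N}^{\chi}(Z,-m)=\sum_{\gamma}\det(\Im(Q_n\gamma Z))^{-m}\,j(Q_n\gamma,Z)^{-k}$ over the same (weight-independent) coset space. Let $\delta_l^{(m)}$ denote the scalar-valued Maass--Shimura operator on $\H_{2n}$ raising weight $l$ to weight $l+2m$: it is a differential operator with holomorphic coefficients, maps $N_l(\Gamma)$ into $N_{l+2m}(\Gamma)$ for every congruence subgroup $\Gamma$ (in particular holomorphic forms to nearly holomorphic ones), satisfies $\delta_l^{(m)}(F|_l g)=(\delta_l^{(m)}F)|_{l+2m}g$ for $g\in\Sp_{4n}(\R)$, and has $\delta_l^{(m)}1=c_{l,m}\det(Y)^{-m}$ for a nonzero constant $c_{l,m}$ (the last being the computation of Maass and Shimura; see \cite{Shimura1987, shibook2}). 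I would apply this with $l=k-2m\ge 2n+2$ to $G:=E_{k-2m,N}^{\chi}(\,\cdot\,,0)$, which is holomorphic by the first part (note $\chi_\infty=\sgn^k=\sgn^{k-2m}$, so the hypotheses of the proposition hold verbatim with $k$ replaced by $k-2m$). In the form \eqref{eisensteinserieseq2}, $G(Z)=\sum_\gamma j(Q_n\gamma,Z)^{-(k-2m)}=\sum_\gamma (1|_{k-2m}(Q_n\gamma))(Z)$ with $1$ the constant function, a series converging locally uniformly together with all its derivatives, so term-by-term differentiation and equivariance give
$$
 \delta_{k-2m}^{(m)}G=\sum_\gamma (\delta_{k-2m}^{(m)}1)|_{k}(Q_n\gamma)=c_{k-2m,m}\sum_\gamma j(Q_n\gamma,Z)^{-k}\det(\Im(Q_n\gamma Z))^{-m}=c_{k-2m,m}\,E_{k,N}^{\chi}(Z,-m).
$$
Hence $E_{k,N}^{\chi}(\,\cdot\,,-m)=c_{k-2m,m}^{-1}\,\delta_{k-2m}^{(m)}G$ lies in $N_k(\Gamma_{4n}(N))$, as required.

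The only ingredient above that is not a purely formal manipulation is the identity $\delta_l^{(m)}1=c_{l,m}\det(Y)^{-m}$ (with $c_{l,m}\ne 0$) in degree $2n$ — equivalently, the classical fact that an iterated Maass--Shimura operator applied to the holomorphic Siegel Eisenstein series of weight $k-2m$ reproduces, up to a nonzero constant, the series $\sum_\gamma\det(\Im(\gamma Z))^{-m}j(\gamma,Z)^{-k}$. This is exactly the mechanism that makes near-holomorphy of the latter true yet invisible term by term (the individual summands involve $\overline{j(\gamma,Z)}$ and negative powers of $\det Y$), and I expect it to be the main — essentially the only — non-routine point; it is the step I would import from \cite{Shimura1987, shibook2} rather than reprove. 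It is worth noting that because \eqref{eisensteinserieseq2} carries the level-$N$ and character data through a coset space that does not depend on the weight, no extra adelic-to-classical bookkeeping of the ramified local sections is needed in passing from weight $k-2m$ to weight $k$.
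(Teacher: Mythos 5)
Your argument is correct, but for the general-$m$ part it takes a genuinely different route from the paper. The paper's proof is a direct term-by-term one: since the series at $s=-m$ converges absolutely (exactly your observation that $k-2m\ge 2n+2$), it suffices to check that each summand $\det(\Im(\gamma Z))^{-m}j(\gamma,Z)^{-k}$ lies in $N(\H_{2n})$, and this follows at once from the identity $\Im(\gamma Z)^{-1}=(CZ+D)Y^{-1}\,{}^t(CZ+D)-2i(CZ+D)\,{}^tC$, which exhibits $\det(\Im(\gamma Z))^{-m}$ as a polynomial in the entries of $Y^{-1}$ with holomorphic coefficients. You instead reduce to the holomorphic case $m=0$ by realizing $E_{k,N}^{\chi}(\,\cdot\,,-m)$ as a nonzero multiple of $\delta_{k-2m}^{(m)}E_{k-2m,N}^{\chi}(\,\cdot\,,0)$; this is precisely the mechanism the paper deploys in the \emph{next} proposition (the $\Aut(\C)$-equivariance of ${}^\sigma E_{k,N}^{\chi}(Z,-m)$, via $\Delta_k E_{k,N}^{\chi}(Z,-m+1)=d_kE_{k+2,N}^{\chi}(Z,-m)$), so your route front-loads machinery the paper needs anyway, at the cost of importing the nonvanishing of the Maass--Shimura constants and the identity $\delta_l^{(m)}1=c_{l,m}\det(Y)^{-m}$ from Shimura, whereas the paper's argument for this particular proposition is self-contained and shorter. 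Your observation that the level/character data ride along unchanged under the weight shift $k\mapsto k-2m$ (because $\sgn^k=\sgn^{k-2m}$ and the coset space in \eqref{eisensteinserieseq2} is weight-independent) is the one point that genuinely needs checking in your reduction, and you check it. One small inaccuracy: the Maass--Shimura operator is not a differential operator with holomorphic coefficients (its coefficients involve entries of $Y^{-1}$); this does not affect your argument, since the properties you actually use (equivariance, preservation of near-holomorphy, and the computation of $\delta_l^{(m)}1$) are correct, but you should not describe it that way.
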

\begin{proof} We already noted above that $E_{k,N}^{\chi}(Z,s)$ has the transformation and growth properties of a Siegel modular form of weight $k$ and that the series \eqref{eisensteinserieseq} defining $E_{k,N}^{\chi}(Z, s)$ converges absolutely whenever $2 \Re(s)+ k >2n+1$. So to complete the proof, it suffices to show that for each $\gamma \in\Sp_{4n}(\Q)$, and each integer $j$, we have $\det(\Im(\gamma Z))^{-j} \in N(\H_{2n})$. This is immediate from the well-known equation
$$
 \Im(\gamma Z)^{-1} = (CZ+D)Y^{-1}\:^t(CZ +D) - 2i (CZ+D)\,^tC,
$$
valid for all $\gamma = \mat{A}{B}{C}{D}$ in $\Sp_{4n}(\R)$.
\end{proof}
\begin{remark}
It should be possible to replace the hypothesis $k\ge 2n+2$ with $k \ge n+1$ and the hypothesis $0 \le m \le \frac{k}2-n-1$ with $0 \le m \le \frac{k}2-\frac{n+1}2$, using more refined arguments as in the work of Shimura \cite{shibook2} and others.
\end{remark}
\section{Nearly holomorphic modular forms and critical \texorpdfstring{$L$}{}-values}
\subsection{Preliminaries}\label{final:prelim}
Throughout this section we assume $n=2$. Let $\ell, m$ be non-negative integers with $m$ even and $\ell \ge 6$. We put ${\mathbf{k}}=(\ell+m)e_1+\ell e_2$ and  $k = \ell +m$. For each integer $N= \prod_p p^{m_p}$, we let $\Pi_N(\mathbf{k})$ denote the set of all  cuspidal automorphic representations $\pi\cong\otimes \pi_p$ of $G_{4}(\A)$ such that $\pi_\infty$ equals the holomorphic discrete series representation $\pi_{\mathbf{k}}$ and such that for each finite prime $p$, $\pi_p$ has a vector right invariant under the principal congruence subgroup $\Gamma_{4}(p^{m_p})$ of $\Sp_{4}(\Z_p)$. We put $\Pi(\mathbf{k}) = \bigcup_N\Pi_N(\mathbf{k})$.

We say that a character $\chi=\otimes\chi_p$ of $\Q^\times \bs
\A^\times$ is a \emph{Dirichlet character} if $\chi_\infty$ is trivial
on $\R_{>0}$. Any such $\chi$ gives rise to a homomorphism $\tilde{\chi}:(\Z/N_\chi)^\times\rightarrow \C^\times$, where $N_\chi$ denotes the conductor of $\chi$.
Concretely\footnote{In fact, the map $\chi \mapsto \tilde{\chi}$ gives a
bijection between Dirichlet characters in our sense and primitive
Dirichlet characters in the classical sense.} the map $\tilde{\chi}$ is
given by $\tilde{\chi}(a) = \prod_{p|N_\chi} \chi_p(a)$. Given a
Dirichlet character $\chi$, we define the corresponding Gauss sum by
$G(\chi) = \sum_{n \in  (\Z/N_\chi)^\times }\tilde{\chi}(n) e^{2 \pi i
n/N_\chi}.$

Let $\mathcal{X}_N$  be the set of all Dirichlet characters $\chi$ such that $\chi_\infty = \sgn^\ell$ and  $N_\chi|N$. We denote $\mathcal{X} = \bigcup_N \mathcal{X}_N$. For any $\pi \in \Pi_N(\mathbf{k})$, $\chi \in \mathcal{X}_N$,  and any integer $r$, we define, following the notation of Corollary \ref{criticalcor},
$$
 C_N(\pi, \chi, r):=  (-1)^k \pi^{2r+4-2k} \ c_{k,r,2,N}  \ \frac{L^N(r,\pi \boxtimes \chi, \varrho_{5})}{L^N(r+2, \chi) L^N(2r, \chi^2) L^N(2r+2, \chi^2)}.
$$
The reader should not be confused by the two different $\pi$ (one the constant, the other an automorphic representation) appearing in the above definition. From the results in \cite[p. 236]{dhl92}
it follows that  if $r> 2$, we are in the absolutely convergent range for all the relevant $L$-functions and therefore $C_N(\pi, \chi, r)$ is not equal to either 0 or $\infty$.

Recall that $N_k(\Gamma_4(N))$ denotes the (finite-dimensional) space of nearly holomorphic modular forms of weight $k$ for the subgroup $\Gamma_4(N)$ of $\Sp_4(\Z)$. Let $V_N$ be the subset of $N_k(\Gamma_4(N))$ consisting of those forms $F$ which are \emph{cuspidal} and for which the corresponding function $\Phi_F$ on $\Sp_4(\R)$ generates an irreducible representation isomorphic to $\pi_{\mathbf{k}}$. By Theorem 4.8 and Proposition 4.28 of \cite{PSS14}, $V_N$ is a \emph{subspace} of $N_k(\Gamma_4(N))$ and isomorphic to the space $S_{\ell, m}(\Gamma_4(N))$ of holomorphic vector-valued cusp forms of weight $\det^\ell \sym^m$ for $\Gamma_4(N)$; indeed $V_N=U^{m/2}(S_{\ell, m}(\Gamma_4(N)))$, where $U$ is the differential operator defined in Section 3.4 of \cite{PSS14}.  We put $V= \bigcup_N V_N$ and $N_k = \bigcup_N N_k(\Gamma_4(N))$.

As in \cite{PSS14}, we let $\mathfrak{p}^\circ_{\ell, m}$ denote the orthogonal projection map from $N_k$ to $V$; note that it takes $N_k(\Gamma_4(N))$ to $V_N$ for each $N$. Let $4 \le r \le \ell-2$, $r \equiv \ell \pmod{2}$ be an integer and $\chi \in \mathcal{X}_N$. Then $E_{k,N}^\chi(Z_1, Z_2, 1 - \frac{k-r}2) \in N_k(\Gamma_4(N)) \otimes N_k(\Gamma_4(N))$. We define
\begin{equation}\label{GkNdefeq}
 G_{k,N}^\chi(Z_1, Z_2, r) := \pi^{2r+4 -2k} \times (\mathfrak{p}^\circ_{\ell, m} \otimes  \mathfrak{p}^\circ_{\ell, m}) (E_{k,N}^\chi(Z_1, Z_2, 1 - \frac{k-r}2)).
\end{equation}

If $F \in V$ is such that (the adelization of) $F$ generates a multiple of an irreducible (cuspidal, automorphic) representation of $G_4(\A)$, then we let $\pi_F$ denote the representation associated to $F$. Note that the set of automorphic representations $\pi_F$ obtained this way as $F$ varies in $V_N$ is precisely equal to the set $\Pi_N(\mathbf{k})$ defined above. For each $\pi \in \Pi(\mathbf{k})$, we let $V_N(\pi)$ denote the $\pi$-isotypic part of $V_N$. Precisely, this is the subspace consisting of all those $F$ in $V_N$ such that all irreducible constituents of the representation generated by (the adelization of) $F$ are isomorphic to $\pi$. Note that $V_N(\pi)= \{0\}$ unless $\pi \in \Pi_N(\mathbf{k})$. We have an orthogonal direct sum decomposition \begin{equation}\label{E:VN}V_N = \bigoplus_{\pi \in \Pi_N(\mathbf{k})} V_N(\pi).\end{equation}

We define $V(\pi) = \bigcup_N V_N(\pi)$. Therefore we have $V = \bigoplus_{\pi \in \Pi(\mathbf{k})} V(\pi)$. Now, let $\mathfrak{B}$ be \emph{any} orthogonal basis of $V_N$ formed by taking a union of orthogonal bases from the right side of \eqref{E:VN}. Thus each $F \in \mathfrak{B}$ belongs to $V_N(\pi)$ for some $\pi \in \Pi_N(\mathbf{k})$. From Corollary \ref{criticalcor}, Proposition \ref{propnearholo}, and \eqref{innerconj}, we deduce the following key identity:

\begin{align}\label{keyidentity}G_{k,N}^\chi(Z_1, Z_2, r) &= \sum_{F \in \mathfrak{B}} C_N(\pi_F, \chi, r) \frac{F(Z_1) \bar{F}(Z_2)}{\langle F, F\rangle} \nonumber \\ &= \sum_{\pi \in \Pi_N(\mathbf{k})} C_N(\pi, \chi, r) \sum_{\substack{F \in \mathfrak{B}\\ F \in V_N(\pi)}} \frac{F(Z_1) \bar{F}(Z_2)}{\langle F, F\rangle}.  \end{align}
\subsection{Arithmeticity}
For any nearly holomorphic modular form $F \in N_k(\Gamma)$ (where $\Gamma$ is a congruence subgroup of $\Sp_{4}(\Z)$), we let ${}^\sigma F$ denote the nearly holomorphic modular form obtained by letting $\sigma$ act on the Fourier coefficients of $F$ (see Section 5.2 of \cite{PSS14}). Note that if $\sigma$ is complex conjugation  then ${}^\sigma F$ equals $\bar{F}$.

\begin{proposition}
Let $0 \le m \le \frac{k}2 - 3$, and $\chi \in \mathcal{X}_N$. Then for any $\sigma \in \Aut(\C)$, we have
$$
 {}^\sigma E_{k,N}^\chi(Z, -m) = \frac{\sigma(\pi^{4m})}{\pi^{4m}} E_{k,N}^{{}^\sigma\!\chi}(Z, -m).
$$
\end{proposition}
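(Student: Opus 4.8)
The plan is to reduce the statement to the arithmeticity of the \emph{holomorphic} Siegel Eisenstein series and then transport it through a Maass--Shimura operator. Since $0\le m\le \frac k2-3=\frac k2-n-1$ (recall $n=2$), we have $k-2m\ge 2n+2$, so by Proposition~\ref{propnearholo} the series $E^{\chi}_{k-2m,N}(Z,0)$ is a genuine holomorphic Siegel modular form of degree $2n$ and weight $k-2m$ on $\Gamma_{4n}(N)$; note also that $m$ is even, so $\chi_\infty=\sgn^\ell=\sgn^{k-2m}$, which is what is needed to form this Eisenstein series. The first step is to invoke Shimura's theory of nearly holomorphic Eisenstein series (see \cite{shibook2}) to write
$$
 E^{\chi}_{k,N}(Z,-m)=\pi^{4m}\,c_{k,m}\,\delta^{(m)}_{k-2m}\!\left(E^{\chi}_{k-2m,N}(\,\cdot\,,0)\right),
$$
where $\delta^{(m)}_{k-2m}$ is the $m$-fold iterated Maass--Shimura weight-raising operator on $\H_{2n}$ in its arithmetic normalization (a polynomial, with $\Q$-coefficients, in $\partial/\partial Z$ and the entries of $(4\pi Y)^{-1}$), raising weight from $k-2m$ to $k$, and $c_{k,m}$ is an explicit nonzero \emph{rational} number. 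The power $\pi^{4m}$ appearing here is exactly the one already built into \eqref{GkNdefeq} via the factor $\pi^{2r+4-2k}=\pi^{-4m}$ (with $r=k-2-2m$, so that $1-\frac{k-r}2=-m$).

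The second step is the holomorphic case: I would show ${}^\sigma E^{\chi}_{k-2m,N}(Z,0)=E^{{}^\sigma\!\chi}_{k-2m,N}(Z,0)$. This follows from the classical description \eqref{eisensteinserieseq} of $E^{\chi}_{k-2m,N}(Z,s)$ at $s=0$ together with the definition of the local sections $f_p$ in Section~\ref{s:badplaces}: the Fourier coefficients are sums over a lattice of cosets of products $\prod_{p\mid N}f_p(\gamma,0)$, each of which is a value of $\chi_p$ and hence lies in $\Q(\chi)$, weighted by the Fourier coefficients of the level-one degenerate Eisenstein series $\sum_{\gamma}\det(CZ+D)^{-(k-2m)}$, which in the stable range $k-2m\ge 2n+2$ are given by Siegel's formula in terms of rational singular series and values of $\zeta$ and of $L(\,\cdot\,,\chi^2)$ at negative integers — all algebraic and $\Aut(\C)$-equivariant, the latter sending the $\chi$-data to the ${}^\sigma\!\chi$-data. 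More conceptually, this is Harris's arithmeticity theorem for holomorphic Siegel Eisenstein series \cite{hareis}. Since conductors are preserved by Galois conjugation and $({}^\sigma\!\chi)_\infty=\chi_\infty$, we have ${}^\sigma\!\chi\in\mathcal X_N$ and the bound on $m$ is unaffected, so $E^{{}^\sigma\!\chi}_{k-2m,N}(Z,0)$ makes sense and the argument is symmetric in $\chi\leftrightarrow{}^\sigma\!\chi$.

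For the third step I would use Shimura's theorem (as developed arithmetically in \cite{PSS14}) that $\delta^{(m)}_{k-2m}$ sends arithmetic nearly holomorphic forms to arithmetic nearly holomorphic forms and commutes with the $\Aut(\C)$-action on Fourier coefficients, i.e.\ ${}^\sigma(\delta^{(m)}_{k-2m}G)=\delta^{(m)}_{k-2m}({}^\sigma G)$. Combining the three steps, $\pi^{-4m}E^{\chi}_{k,N}(Z,-m)=c_{k,m}\,\delta^{(m)}_{k-2m}(E^{\chi}_{k-2m,N}(\,\cdot\,,0))$ has Fourier coefficients in $\Q(\chi)$, and applying $\sigma$ (using $c_{k,m}\in\Q$ and steps two and three) yields ${}^\sigma\!\big(\pi^{-4m}E^{\chi}_{k,N}(Z,-m)\big)=c_{k,m}\,\delta^{(m)}_{k-2m}(E^{{}^\sigma\!\chi}_{k-2m,N}(\,\cdot\,,0))=\pi^{-4m}E^{{}^\sigma\!\chi}_{k,N}(Z,-m)$. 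Finally, since $\sigma$ acts multiplicatively on the (now algebraic) Fourier coefficients of $E^{\chi}_{k,N}(Z,-m)=\pi^{4m}\cdot\big(\pi^{-4m}E^{\chi}_{k,N}(Z,-m)\big)$, this identity is equivalent to ${}^\sigma E^{\chi}_{k,N}(Z,-m)=\frac{\sigma(\pi^{4m})}{\pi^{4m}}E^{{}^\sigma\!\chi}_{k,N}(Z,-m)$, which is the assertion.

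The main obstacle is the first step: verifying that the proportionality between the paper's \emph{analytically} normalized special value $E^{\chi}_{k,N}(Z,-m)$ and the arithmetic Maass--Shimura derivative $\delta^{(m)}_{k-2m}(E^{\chi}_{k-2m,N}(\,\cdot\,,0))$ is \emph{precisely} $\pi^{4m}$ times a \emph{rational} constant — not some other power of $\pi$, nor merely an algebraic-over-$\Q(\chi)$ constant. This requires carefully matching the normalization of the doubling Eisenstein series in \eqref{eisensteinserieseq2} with Shimura's, tracking the archimedean integral and $\Gamma$-factors and the singular-series contributions at $s=-m$; the relevant computation is essentially that of \cite{shibook2}, and can be sanity-checked in the degree-one model using the expansion $E_\kappa(z,-m)=\sum_{j=0}^m\binom{m}{j}(-2i)^j\,y^{j-m}\sum_{(c,d)}\frac{c^j}{(cz+d)^{\kappa-2m+j}}$ obtained by writing $\overline{cz+d}=(cz+d)-2icy$. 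A secondary point, flagged above, is confirming that $E^{\chi}_{k-2m,N}(Z,0)$ in the paper's normalization has $\Q(\chi)$-rational $q$-expansion in the first place, which is where Siegel's and Klingen's rationality results (equivalently \cite{hareis}) enter.
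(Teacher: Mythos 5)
Your proposal is correct and follows essentially the same route as the paper: reduce to the holomorphic case $m=0$ via Harris's arithmeticity theorem, then transport the statement through $m$ iterations of the Maass--Shimura operator, whose $\Aut(\C)$-equivariant normalization differs from the analytic one by a factor of $\pi^{4}$ per application (the paper records this as ${}^\sigma(\pi^{-4}\Delta_k F)=\Delta_k(\pi^{-4}\,{}^\sigma\!F)$, which is your $\delta^{(m)}$ in disguise). The ``main obstacle'' you flag is handled more directly in the paper: rather than matching normalizations against Shimura's singular-series computations, one applies Maass's termwise identity $\Delta_k\bigl(\det(\Im(\gamma Z))^s j(\gamma,Z)^{-k}\bigr)=d_k\det(\Im(\gamma Z))^{s-1}j(\gamma,Z)^{-k-2}$ with $d_k\in\Q^\times$ to the absolutely convergent series \eqref{eisensteinserieseq}, which immediately yields $\Delta_k\bigl(E_{k,N}^\chi(Z,-m+1)\bigr)=d_k\,E_{k+2,N}^\chi(Z,-m)$ and hence the required rationality of the proportionality constant.
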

 \begin{proof}
Recall that $E_{k,N}^\chi(Z, 0)$ is a holomorphic Eisenstein series in the region of absolute convergence. So in the case $m=0$, the result follows from the work of Harris (the main result of \cite{hareis}). For the case $m>0$, we need the Maass-Shimura differential operator $\Delta_k = \det(Y)^{-1} M_k$, where the operator $M_k$ was first defined by Maass \cite[\S 19]{maassbook}.  Maass showed that for any $\gamma \in \Sp_8(\R)$, we have
$$
 \Delta_k\left(\det(\Im(\gamma Z))^s j(\gamma, z)^{-k}\right) = d_k\det(\Im(\gamma Z))^{s-1} j(\gamma, z)^{-k-2},
$$
where $d_k$ is a non-zero rational number. Using the absolutely convergent expression \eqref{eisensteinserieseq}, we deduce  that $\Delta_k(E_{k,N}^\chi(Z, -m+1)) =  d_k E_{k+2,N}^\chi(Z, -m)$. It follows that $E_{k,N}^\chi(Z, -m)$ is a rational multiple of $\Delta_{k-2} \circ \ldots \circ\Delta_{k-2m}E_{k-2m,N}^\chi(Z,0)$.  The result now follows by induction from the $m=0$ case and the identity
\begin{equation}\label{e:massoparith}
 {}^\sigma(\pi^{-4}\Delta_k F) = \Delta_k(\pi^{-4} \ {}^\sigma\!F),
\end{equation}
which holds for all nearly holomorphic modular forms $F$ of degree 4. The identity \eqref{e:massoparith} is an immediate and elementary consequence of the explicit formula for $\Delta_k$ (see the proof of \cite[Lemma 8]{sturm}); alternatively, one can use geometric arguments as in Corollary 6.9 of \cite{harsieg}.
 \end{proof}

\begin{corollary}\label{eisequivarcor}
For all $\sigma \in \Aut(\C)$ and $4 \leq r \leq \ell-2, r \equiv \ell \pmod{2}$, we have
$$
 {}^\sigma G_{k,N}^\chi(Z_1, Z_2, r) = G_{k,N}^{{}^\sigma\! \chi}(Z_1, Z_2, r).
$$
In particular, the Fourier coefficients of $G_{k,N}^\chi(Z_1, Z_2, r)$ lie in $\Q(\chi)$.
\end{corollary}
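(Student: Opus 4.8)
The plan is to deduce the corollary formally from the Proposition just proved together with the $\Aut(\C)$-equivariance of the isotypic projection $\mathfrak{p}^\circ_{\ell,m}$ supplied by the structure theorem of \cite{PSS14}. First I would set $a:=\frac{k-r}{2}-1$. Since $k=\ell+m$ with $m$ even and $r\equiv\ell\pmod 2$, the number $k-r$ is even, so $a\in\Z$; and the hypotheses $4\le r\le\ell-2\le k-2$ give $0\le a\le\frac{k}{2}-3$, which is exactly the range in which Proposition \ref{propnearholo} and the preceding Proposition apply to $E_{k,N}^\chi(Z,-a)$. Because $1-\frac{k-r}{2}=-a$ and $2r+4-2k=-4a$, the definition \eqref{GkNdefeq} may be rewritten as $G_{k,N}^\chi(Z_1,Z_2,r)=\pi^{-4a}\,(\mathfrak{p}^\circ_{\ell,m}\otimes\mathfrak{p}^\circ_{\ell,m})(E_{k,N}^\chi(Z_1,Z_2,-a))$.

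Next I would assemble two inputs. (i) The $\sigma$-action on Fourier coefficients commutes with restriction along the diagonal $\H_2\times\H_2\hookrightarrow\H_4$: the Fourier coefficients of the restriction of a nearly holomorphic form on $\H_4$ are finite $\Z$-linear combinations of those of the original (only finitely many cross terms $\left[\begin{smallmatrix}T_1&T_{12}\\{}^tT_{12}&T_2\end{smallmatrix}\right]$ are positive semidefinite), so the preceding Proposition, applied with shift parameter $a$, yields ${}^\sigma E_{k,N}^\chi(Z_1,Z_2,-a)=\frac{\sigma(\pi^{4a})}{\pi^{4a}}E_{k,N}^{{}^\sigma\!\chi}(Z_1,Z_2,-a)$. (ii) By the structure theorem of \cite{PSS14}, $\mathfrak{p}^\circ_{\ell,m}$ is $\Aut(\C)$-equivariant, ${}^\sigma(\mathfrak{p}^\circ_{\ell,m}F)=\mathfrak{p}^\circ_{\ell,m}({}^\sigma F)$ for all $F\in N_k$, and hence so is $\mathfrak{p}^\circ_{\ell,m}\otimes\mathfrak{p}^\circ_{\ell,m}$ on forms on $\H_2\times\H_2$. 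Granting these, the first assertion is a one-line computation:
\begin{align*}
 {}^\sigma G_{k,N}^\chi(Z_1,Z_2,r)
  &=\sigma(\pi^{-4a})\,\big(\mathfrak{p}^\circ_{\ell,m}\otimes\mathfrak{p}^\circ_{\ell,m}\big)\big({}^\sigma E_{k,N}^\chi(Z_1,Z_2,-a)\big)\\
  &=\sigma(\pi^{-4a})\,\frac{\sigma(\pi^{4a})}{\pi^{4a}}\,\big(\mathfrak{p}^\circ_{\ell,m}\otimes\mathfrak{p}^\circ_{\ell,m}\big)\big(E_{k,N}^{{}^\sigma\!\chi}(Z_1,Z_2,-a)\big)\\
  &=\pi^{-4a}\,\big(\mathfrak{p}^\circ_{\ell,m}\otimes\mathfrak{p}^\circ_{\ell,m}\big)\big(E_{k,N}^{{}^\sigma\!\chi}(Z_1,Z_2,-a)\big)=G_{k,N}^{{}^\sigma\!\chi}(Z_1,Z_2,r),
\end{align*}
the cancellation $\sigma(\pi^{-4a})\sigma(\pi^{4a})=\sigma(1)=1$ being precisely why the normalizing factor $\pi^{2r+4-2k}$ was built into \eqref{GkNdefeq}.

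For the ``in particular'' clause I would first observe that $G_{k,N}^\chi(Z_1,Z_2,r)$ has algebraic Fourier coefficients: $\pi^{-4a}E_{k,N}^\chi(Z,-a)$ is $\overline{\Q}$-rational by the preceding Proposition (whose proof realizes $E_{k,N}^\chi(Z,-a)$ as a rational multiple of an iterate of Maass--Shimura operators applied to the holomorphic Eisenstein series $E_{k-2a,N}^\chi(Z,0)$, which is algebraic by Harris \cite{hareis}, with the powers of $\pi$ absorbed via \eqref{e:massoparith}), and $\mathfrak{p}^\circ_{\ell,m}$ preserves $\overline{\Q}$-rationality by \cite{PSS14}. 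Then, if $\sigma\in\Aut(\C)$ fixes $\Q(\chi)$ we have ${}^\sigma\!\chi=\chi$, so the equivariance just established gives ${}^\sigma G_{k,N}^\chi=G_{k,N}^\chi$; since this holds for every such $\sigma$ and the coefficients are algebraic, they lie in $\Q(\chi)$.

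The corollary's proof is essentially formal, so its ``main obstacle'' is really the one substantive ingredient it rests on: the $\Aut(\C)$-equivariance and $\overline{\Q}$-rationality of the Petersson-orthogonal projection $\mathfrak{p}^\circ_{\ell,m}$, which is exactly where the structure theorem for degree-$2$ nearly holomorphic forms of \cite{PSS14} enters (and the reason the discussion is confined to $n=2$). The only other place a moment's care is needed is in checking that the shift $-a$ lies in the near-holomorphy / absolute-convergence range governed by Proposition \ref{propnearholo} and the preceding Proposition, for which the inequalities $4\le r\le\ell-2$ together with the parity condition $r\equiv\ell\pmod 2$ are precisely what is required.
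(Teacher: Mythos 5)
Your argument is correct and is essentially the paper's proof: both deduce the identity from the preceding proposition on ${}^\sigma E_{k,N}^\chi(Z,-m)$ together with the $\Aut(\C)$-equivariance of $\mathfrak{p}^\circ_{\ell,m}$ from \cite{PSS14}, with the powers of $\pi$ in \eqref{GkNdefeq} cancelling exactly as you compute, and the range check $0\le a\le \frac k2-3$ is the same bookkeeping. The only point the paper makes explicit that you treat as automatic is that the equivariance of $\mathfrak{p}^\circ_{\ell,m}$ (Proposition 5.17 of \cite{PSS14}) requires verifying a $\Q$-rationality hypothesis on the orthogonal complement of $S_{\ell,m}(\Gamma_4(N))$; this holds here because, in the range $\ell\ge 6$ and for the principal congruence subgroup $\Gamma_4(N)$, that complement is spanned by holomorphic vector-valued Eisenstein series, which $\Aut(\C)$ permutes by \cite{hareis}.
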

\begin{proof}
Recall that $G_{k,N}^\chi(Z_1, Z_2, r)$ is defined by \eqref{GkNdefeq}. So the corollary follows from the above proposition and the fact that the map $\mathfrak{p}^\circ_{\ell, m}$ commutes with the action of $\Aut(\C)$ (see Proposition 5.17 of \cite{PSS14}). A key point here is that because we are in the absolutely convergent range $\ell\ge 6$ and as $\Gamma_4(N)$ is a principal congruence subgroup, the orthogonal complement to $S_{\ell, m}(\Gamma_4(N))$ is spanned by holomorphic vector-valued Eisenstein series, and by the main result of \cite{hareis}, the action of any element of $\Aut(\C)$ on such an Eisenstein series just gives another Eisenstein series. In particular the  necessary $\Q$-rational condition in Proposition 5.17 of \cite{PSS14} holds.
\end{proof}

Let $\sigma \in \Aut(\C)$. For $\pi \in \Pi_N(\mathbf{k})$, we let ${}^\sigma \pi \in \Pi_N(\mathbf{k})$  be the representation obtained by the action of $\sigma$, and we let $\Q(\pi)$ denote the field of rationality of $\pi$; see the beginning of Section 3.4 of \cite{sahapet}. If $\sigma$ is the complex conjugation, then we denote ${}^\sigma \pi = \bar{\pi}$. It is known that $\Q(\pi)$ is a CM field and $\Q(\bar{\pi}) = \Q(\pi)$. We use $\Q(\pi, \chi)$ to denote the compositum of $\Q(\pi)$ and $\Q(\chi)$.  Note that
\begin{equation}\label{FVNEQ}
 F \in V_N(\pi) \;\Longrightarrow\; {}^\sigma F \in V_N({}^\sigma\pi).
\end{equation}
This follows from Theorem 4.2.3 of \cite{blaharam} (see the proof of  Proposition 3.13 of \cite{sahapet}) together with the fact that the $U$ operator commutes with $\sigma$. In particular, the space $V_N(\pi)$ is preserved under the action of the group $\Aut(\C/\Q(\pi))$. Using Lemma 3.17 of \cite{sahapet}, it follows that the space $V_N(\pi)$ has a basis consisting of forms whose Fourier coefficients are in $\Q(\pi)$. In particular there exists some $F$ satisfying the conditions of the next proposition.

\begin{proposition}\label{mainprop}
 Let $\pi \in \Pi_N(\mathbf{k})$, and $F \in V_N(\pi)$. Suppose that the Fourier coefficients of $F$ lie in a CM field. Then for any $4 \le r \le \ell-2$, $r \equiv \ell \pmod{2}$, and any $\sigma \in \Aut(\C)$ we have
 $$
  \sigma \left(\frac{C_N(\pi, \chi, r)}{\langle F, F\rangle} \right) = \frac{C_N({}^\sigma\pi, {}^\sigma\!\chi, r)}{\langle {}^\sigma\! F, {}^\sigma\!F\rangle}.
 $$
\end{proposition}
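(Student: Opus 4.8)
The plan is to run the classical ``Garrett-type'' linear-algebra argument: isolate the $\pi$-component of the key identity~\eqref{keyidentity} and compare it with its $\sigma$-conjugate through Corollary~\ref{eisequivarcor}. We may assume $F\neq 0$. First I would apply the projection $\mathfrak{p}_\pi\colon V_N\to V_N(\pi)$ attached to the orthogonal decomposition~\eqref{E:VN}, in the first variable, to~\eqref{keyidentity}: this kills all summands with $\pi'\neq\pi$ and leaves
\[
 (\mathfrak{p}_\pi\otimes\mathrm{id})\bigl(G_{k,N}^\chi(\,\cdot\,,\,\cdot\,,r)\bigr)=C_N(\pi,\chi,r)\,B_\pi,\qquad B_\pi(Z_1,Z_2):=\sum_{\substack{F'\in\mathfrak{B}\\ F'\in V_N(\pi)}}\frac{F'(Z_1)\,\bar{F'}(Z_2)}{\langle F',F'\rangle},
\]
where $B_\pi$ is independent of the choice of orthogonal basis. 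Fixing a $\Q(\pi)$-rational basis $G_1,\dots,G_d$ of $V_N(\pi)$ (which exists and has Fourier coefficients in the CM field $\Q(\pi)$), with Gram matrix $M=(\langle G_i,G_j\rangle)$ (Hermitian, positive definite), the reproducing property $\langle B_\pi(\,\cdot\,,Z_2),H\rangle_{Z_1}=\bar{H}(Z_2)$ for $H\in V_N(\pi)$ gives $B_\pi(Z_1,Z_2)=\sum_{i,j}((\overline{M})^{-1})_{ij}\,G_i(Z_1)\,\bar{G_j}(Z_2)$.

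Next I would apply the operation ${}^\sigma(\,\cdot\,)$ to this identity, using that ${}^\sigma(cH)=\sigma(c)\,{}^\sigma H$ for $c\in\C$, and that ${}^\sigma(\bar{G_j})=\overline{{}^\sigma G_j}$ because the $G_j$ have CM Fourier coefficients. On the left-hand side, ${}^\sigma$ preserves $V_N$ and, by~\eqref{FVNEQ} together with a dimension count, permutes the orthogonal summands of~\eqref{E:VN} via $\pi'\mapsto{}^\sigma\pi'$; hence $\sigma\circ\mathfrak{p}_\pi=\mathfrak{p}_{{}^\sigma\pi}\circ\sigma$ on $V_N$, and Corollary~\ref{eisequivarcor} turns the left-hand side into $(\mathfrak{p}_{{}^\sigma\pi}\otimes\mathrm{id})(G_{k,N}^{{}^\sigma\!\chi}(\,\cdot\,,\,\cdot\,,r))$, which by the previous paragraph (applied to ${}^\sigma\pi$, ${}^\sigma\!\chi$, and the $\Q({}^\sigma\pi)$-rational basis ${}^\sigma G_1,\dots,{}^\sigma G_d$) equals $C_N({}^\sigma\pi,{}^\sigma\!\chi,r)\sum_{i,j}((\overline{M'})^{-1})_{ij}\,{}^\sigma G_i(Z_1)\,\overline{{}^\sigma G_j}(Z_2)$ with $M'=(\langle{}^\sigma G_i,{}^\sigma G_j\rangle)$. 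Comparing coefficients against the linearly independent family $\{{}^\sigma G_i(Z_1)\,\overline{{}^\sigma G_j}(Z_2)\}$, then inverting the resulting matrix identity and using $\overline{M}={}^{t}M$, $\overline{M'}={}^{t}M'$ and that entrywise $\sigma$ commutes with transposition and inversion, I would obtain
\[
 \sigma(C_N(\pi,\chi,r))\,M'_{ij}=C_N({}^\sigma\pi,{}^\sigma\!\chi,r)\,\sigma(M_{ij})\qquad\text{for all }i,j.
\]

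Finally I would insert $F$. Writing $F=\sum_i b_i G_i$, the coefficients $b_i$ lie in a CM field (they are algebraic, being determined over the fields of Fourier coefficients of $F$ and the $G_i$), so ${}^\sigma F=\sum_i\sigma(b_i)\,{}^\sigma G_i$, and expanding bilinearly $\langle F,F\rangle=\sum_{i,j}b_i\overline{b_j}M_{ij}$, $\langle{}^\sigma F,{}^\sigma F\rangle=\sum_{i,j}\sigma(b_i)\overline{\sigma(b_j)}M'_{ij}$, with $\overline{\sigma(b_j)}=\sigma(\overline{b_j})$. Multiplying the last display by $\sigma(b_i)\overline{\sigma(b_j)}$ and summing yields $\sigma(C_N(\pi,\chi,r))\,\langle{}^\sigma F,{}^\sigma F\rangle=C_N({}^\sigma\pi,{}^\sigma\!\chi,r)\,\sigma(\langle F,F\rangle)$, and dividing through (the denominators are nonzero since $F$ and ${}^\sigma F$ are nonzero) gives the asserted equality; incidentally this also shows $C_N(\pi,\chi,r)/\langle F,F\rangle\in\overline{\Q}$.

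The one non-formal point, which I expect to be the crux, is the assertion that ${}^\sigma$ permutes the \emph{orthogonal} summands of~\eqref{E:VN} so that $\sigma\circ\mathfrak{p}_\pi=\mathfrak{p}_{{}^\sigma\pi}\circ\sigma$: this is exactly where the structure theorem of~\cite{PSS14}, via~\eqref{FVNEQ} and the orthogonality of the isotypic decomposition, is used in an essential way. The remaining ingredients are routine bookkeeping with Hermitian Gram matrices, the linear independence of the product family, and the fact that $\sigma$ commutes with complex conjugation on CM fields --- which is precisely why $F$ is assumed to have CM Fourier coefficients.
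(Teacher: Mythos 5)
Your argument is correct and follows essentially the same route as the paper's: both apply $\sigma$ to the key identity \eqref{keyidentity}, invoke Corollary \ref{eisequivarcor} together with the fact (via \eqref{FVNEQ}) that $\sigma$ permutes the isotypic summands of \eqref{E:VN}, and then extract the desired ratio by comparing coefficients against a linearly independent family, with the CM hypothesis used in both precisely to commute $\sigma$ with complex conjugation. The only difference is bookkeeping: the paper completes $F$ itself to an orthogonal basis of $V_N(\pi)$ and pairs the resulting identity against ${}^\sigma F$ to read off the coefficient of ${}^\sigma\bar F$, whereas you work with a $\Q(\pi)$-rational basis and track inverse Gram matrices before re-inserting $F$ --- slightly longer, but equivalent.
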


\begin{proof}
Let us complete $F$ to an orthogonal basis $\mathfrak{B}=\{F=F_1, F_2, \ldots, F_r \}$ of $V_N(\pi)$.  Let $\mathfrak{B}'=\{G_1, \ldots, G_r \}$ be any orthogonal basis for $V_N({}^\sigma\pi)$. Using \eqref{keyidentity}, \eqref{FVNEQ}, and Corollary \ref{eisequivarcor}, and comparing the $V_N({}^\sigma \pi)$ components, we see that
$$
 \sigma (C_N(\pi, \chi, r)) \sum_{i} \frac{{}^\sigma\!F_i(Z_1) {}^\sigma\!\bar{F_i}(Z_2)}{\sigma(\langle F_i, F_i\rangle)} =C_N({}^\sigma\!\pi, {}^\sigma\!\chi, r) \sum_{i} \frac{G_i(Z_1) \bar{G_i}(Z_2)}{\langle G_i, G_i\rangle}.
$$
Taking inner products of each side with ${}^\sigma\!F_1$ (in the variable $Z_1$) we deduce that
$$
 \sigma (C_N(\pi, \chi, r)) \sum_{i} \frac{\langle {}^\sigma\! F_i, {}^\sigma\! F_1 \rangle }{\sigma(\langle F_i, F_i\rangle)} {}^\sigma\bar{F_i}(Z_2) =C_N({}^\sigma\!\pi, {}^\sigma\!\chi, r)\,\overline{{}^\sigma\!F_1} (Z_2).
$$
Noting that $\overline{{}^\sigma\!F_1} = {}^\sigma\!\bar{F_1}$  and comparing the coefficients of ${}^\sigma\!\bar{F_1}(Z_2)$ on each side, we conclude the desired equality.
\end{proof}
\subsection{Critical \texorpdfstring{$L$}{}-values}For each $p|N$, we define the local $L$-factor $L(s,\pi_p \boxtimes \chi_p, \varrho_{5})$ via the local Langlands correspondence \cite{gantakGSp4}. In particular, $L(s,\pi_p \boxtimes \chi_p, \varrho_{5})$ is just a local $L$-factor for $\GL(5) \times\GL(1)$. This definition also works at the good places, and indeed coincides with what we previously defined. For any finite set of places $S$ of $\Q$, we define the global $L$-function $$L^S(r,\pi \boxtimes \chi, \varrho_{5}) = \prod_{p \notin S} L(s,\pi_p \boxtimes \chi_p, \varrho_{5}).$$

\begin{theorem}\label{t:globalthmarithmetic}
 Let $\pi \in \Pi(\mathbf{k})$, and  let $F\in V(\pi)$ be such that its Fourier coefficients  lie in a CM field. Let $S$ be any finite set of places of $\Q$ containing the infinite place. Then, for any $\chi \in \mathcal{X}$, any integer $r$ satisfying $4 \le r \le \ell-2$, $r \equiv \ell \pmod{2}$,  and any $\sigma \in \Aut(\C)$, we have
 \begin{equation}\label{t:globalthmarithmeticeq1}
  \sigma \left(\frac{L^S(r,\pi \boxtimes \chi, \varrho_{5})}{(2\pi i)^{2k+3r} G(\chi) G(\chi^2)^2 \langle F, F\rangle}\right) = \frac{L^S(r,{}^\sigma\pi \boxtimes {}^\sigma\!\chi, \varrho_{5})}{(2\pi i)^{2k+3r} G({}^\sigma\!\chi) G({}^\sigma\!\chi^2)^2 \langle {}^\sigma\!F,{}^\sigma\!F\rangle}.
 \end{equation}
\end{theorem}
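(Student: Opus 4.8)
The plan is to deduce Theorem~\ref{t:globalthmarithmetic} by combining Proposition~\ref{mainprop} with a standard ``clearing the denominators'' argument relating the normalized $L$-value to the quantity $C_N(\pi,\chi,r)/\langle F,F\rangle$. First I would reduce to a fixed level $N$: since $\pi \in \Pi(\mathbf{k})$ and $\chi \in \mathcal{X}$, there exists $N$ with $\pi \in \Pi_N(\mathbf{k})$ and $\chi \in \mathcal{X}_N$, and by~\eqref{FVNEQ} the form ${}^\sigma F$ lies in $V_N({}^\sigma\pi)$, so all objects live at level $N$. Next I would unwind the definition of $C_N(\pi,\chi,r)$: by definition
$$
 C_N(\pi, \chi, r) = (-1)^k \pi^{2r+4-2k}\, c_{k,r,2,N}\, \frac{L^N(r,\pi \boxtimes \chi, \varrho_{5})}{L^N(r+2, \chi) L^N(2r, \chi^2) L^N(2r+2, \chi^2)},
$$
where $c_{k,r,2,N}$ is a non-zero rational number by Corollary~\ref{criticalcor} and Remark~\ref{rem:Akrational}. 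The key point is therefore to convert the truncated $L$-functions $L^N$ appearing here into the $L^S$ of the statement, absorbing the finitely many extra Euler factors (those at primes in $S \setminus\{\infty\}$ and, conversely, those at $p \mid N$ not in $S$), as well as converting the $\GL_1$-factors $L^N(r+2,\chi)$, $L^N(2r,\chi^2)$, $L^N(2r+2,\chi^2)$ into powers of $2\pi i$ times Gauss sums.

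The main technical step is this last conversion, and it is where I expect to spend the most effort. For a Dirichlet character $\psi$ with $\psi_\infty = \sgn^a$, the completed Dirichlet $L$-value $L(j,\psi)$ at an integer $j$ with $j \equiv a \pmod 2$, $j \geq 1$, satisfies the classical formula relating $L(j,\psi)$ to $(2\pi i)^j G(\psi)/N_\psi^{\,?}$ times an algebraic number, and more importantly is $\Aut(\C)$-equivariant in the precise sense that $\sigma\big(L(j,\psi)/((2\pi i)^j G(\psi))\big) = L(j,{}^\sigma\psi)/((2\pi i)^j G({}^\sigma\psi))$ up to controllable rational factors; here one must check the parities ($\chi_\infty = \sgn^\ell$, $r \equiv \ell \pmod 2$ gives $r+2 \equiv \ell$, and $\chi^2$ is even with $2r$, $2r+2$ even, so all three $\GL_1$ $L$-values are critical) and that we are in the range of non-vanishing, which holds since $r \geq 4$. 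Assembling: $L^N(r+2,\chi)$ contributes a $(2\pi i)^r$ (up to the constant shift, handled by the $(2\pi i)^{2k}$ in the denominator together with $\pi^{2r+4-2k}$) and a $G(\chi)$ in the denominator, while $L^N(2r,\chi^2)$ and $L^N(2r+2,\chi^2)$ jointly contribute $(2\pi i)^{2r}\cdot(2\pi i)^{2r}$-type factors and $G(\chi^2)^2$. Bookkeeping the exponent shows the total power of $2\pi i$ matches $(2\pi i)^{2k+3r}$ in~\eqref{t:globalthmarithmeticeq1}; the ratios of the (finitely many) bad Euler factors at primes in the symmetric difference of $S$ and $\{p\mid N\}\cup\{\infty\}$ lie in $\Q(\pi,\chi)$ and transform correctly under $\sigma$ because each such factor is a rational function in $q_p^{-r}$, the Satake/Langlands parameters of $\pi_p$, and values of $\chi_p$, all of which are permuted compatibly by $\sigma$ (using the definition of ${}^\sigma\pi$ via the local Langlands correspondence at $p\mid N$ and Satake parameters at good $p$, cf.\ the field-of-rationality discussion preceding Proposition~\ref{mainprop}).

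Putting the pieces together: Proposition~\ref{mainprop} gives
$$
 \sigma\!\left(\frac{C_N(\pi,\chi,r)}{\langle F,F\rangle}\right) = \frac{C_N({}^\sigma\pi,{}^\sigma\chi,r)}{\langle {}^\sigma F,{}^\sigma F\rangle},
$$
and substituting the expression for $C_N$, then dividing both sides by the (now explicitly $\sigma$-equivariant) factor built from $\pi^{2r+4-2k}$, $c_{k,r,2,N}$, the three $\GL_1$ $L$-values, and the bad Euler factor corrections, converts this identity into exactly~\eqref{t:globalthmarithmeticeq1}. The non-vanishing of all denominators (needed to legitimately divide) follows from $r \geq 4$ and the remarks after the definition of $C_N(\pi,\chi,r)$ citing \cite{dhl92}, together with $c_{k,r,2,N}\neq 0$. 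The only genuinely delicate point is keeping the powers of $2\pi i$ and the Gauss sums exactly aligned with the normalization $(2\pi i)^{2k+3r} G(\chi) G(\chi^2)^2$ in the statement; everything else is formal manipulation of $\Aut(\C)$-equivariance.
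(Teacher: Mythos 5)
Your proposal is correct and follows essentially the same route as the paper: reduce to a common level $N$, apply Proposition~\ref{mainprop} together with the definition of $C_N(\pi,\chi,r)$, convert $L^N$ to $L^S$ using $\Aut(\C)$-equivariance of the finitely many local factors in the symmetric difference (the paper cites Lemma 4.6 of \cite{clozel} for this), and normalize the three abelian $L$-values via Shimura's lemma $\sigma\bigl(L^N(t,\psi)/((2\pi i)^tG(\psi))\bigr)=L^N(t,{}^\sigma\psi)/((2\pi i)^tG({}^\sigma\psi))$, after which the powers of $2\pi i$ combine (using that $2r+4-2k$ is even, so the leftover $(2i)^{2r+4-2k}$ is rational) to give exactly $(2\pi i)^{2k+3r}$. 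Your parity checks and non-vanishing remarks match the paper's; no gaps.
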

\begin{proof}  Let $\chi \in \mathcal{X}$. We  choose some $N$ such that $F \in V_N(\pi)$ and  $\chi \in \mathcal{X}_N$.
By Proposition \ref{mainprop},
\begin{equation}\label{finaleq1}
 \begin{split}
  &\sigma \left( \frac{L^N(r,\pi \boxtimes \chi, \varrho_{5})}{\pi^{2k-2r-4} L^N(r+2, \chi) L^N(2r, \chi^2) L^N(2r+2, \chi^2)\langle F, F\rangle} \right) \\ &= \frac{L^N(r,{}^\sigma\pi \boxtimes {}^\sigma\!\chi, \varrho_{5})}{\pi^{2k-2r-4} L^N(r+2, {}^\sigma\!\chi) L^N(2r, {}^\sigma\!\chi^2) L^N(2r+2, {}^\sigma\!\chi^2)\langle {}^\sigma\!F, {}^\sigma\!F\rangle}.
 \end{split}
\end{equation}
Next, for any finite prime $p$ such that $p\in S$, $p|N$, and any Dirichlet character $\psi$, it follows from Lemma 4.6 of \cite{clozel} that
\begin{equation}\label{finaleq2}
 \sigma( L(r,\pi_p \boxtimes \chi_p, \varrho_{5})) = L(r,{}^\sigma\pi_p \boxtimes {}^\sigma\! \chi_p, \varrho_{5}), \quad \sigma( L(r, \psi_p )) = L(r,{}^\sigma\!\psi_p).
\end{equation}
Finally, for any Dirichlet character $\psi$, we have the following  fact (see \cite[Lemma 5]{shi76}) for any positive integer $t$ satisfying $\psi(-1) = (-1)^t$:
\begin{equation}\label{finaleq3}
 \sigma \left(\frac{L^N(t, \psi)}{(2\pi i)^t G(\psi)} \right) = \frac{L^N(t, {}^\sigma\!\psi)}{(2\pi i)^t G({}^\sigma\!\psi)}.
\end{equation}
Combining \eqref{finaleq1}, \eqref{finaleq2}, \eqref{finaleq3} (with $\psi = \chi$ and $\chi^2$) we get the desired result.
\end{proof}

\begin{remark}Let $F$ be as in the above theorem. By the results of \cite{PSS14}, we know that $F = U^{m/2} F_0$ where $F_0$ is a holomorphic vector-valued Siegel cusp form whose adelization generates a multiple of $\pi$. Using Lemma 4.16 of \cite{PSS14}, we have moreover the equality $\langle F_0, F_0 \rangle = c_{m} \langle F, F \rangle$ for some constant $c_m$. A calculation (that we do not perform here) shows that $c_m$ is a rational multiple of $\pi^m$. Hence in the theorem above, the term $\langle F, F\rangle$ can be replaced by  $\pi^{-m}\langle F_0, F_0 \rangle$.
\end{remark}

\begin{definition}For two representations $\pi_1$, $\pi_2$ in $\Pi(\mathbf{k})$, we write $\pi_1 \approx \pi_2$ if there is a Hecke character $\psi$ of $\Q^\times \bs \A^\times$ such that $\pi_1$ is nearly equivalent to $\pi_2 \otimes \psi$.
\end{definition}

Note that if such a $\psi$ as above exists, then $\psi_\infty$ must be trivial on $\R_{>0}$ and therefore $\psi$ must be a Dirichlet character. The relation $\approx$ clearly gives an equivalence relation on $\Pi(\mathbf{k})$. For any $\pi \in \Pi(\mathbf{k})$, let $[\pi]$ denote the class of $\pi$, i.e., the set of all representations $\pi_0$ in   $\Pi(\mathbf{k})$ satisfying $\pi_0 \approx \pi$. For any integer $N$, we define the subspace $V_N([\pi])$ of $V_N$ to be the (direct) sum of all the subspaces $V_N(\pi_0)$ where $\pi_0$ ranges over all the inequivalent representations in $[\pi]\cap \Pi_N(\mathbf{k})$.

\begin{corollary}\label{cor:petersson}
 Let $\pi_1, \pi_2 \in \Pi(\mathbf{k})$ be such that $\pi_1 \approx \pi_2$. Let $F_1 \in V(\pi_1)$ and $F_2 \in V(\pi_2)$ have coefficients in a CM field. Then for all $\sigma \in \Aut(\C)$, we have
 $$
  \sigma \left(\frac{\langle F_1, F_1 \rangle}{\langle F_2, F_2 \rangle} \right) = \frac{\langle {}^\sigma F_1, {}^\sigma F_1 \rangle}{\langle {}^\sigma F_2, {}^\sigma F_2 \rangle}.
 $$
\end{corollary}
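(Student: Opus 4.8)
The plan is to reduce everything to Theorem \ref{t:globalthmarithmetic}, which already pins down each $\langle F_i,F_i\rangle$ individually up to an $L$-value, a power of $2\pi i$, and Gauss sums; the point is that the only piece of data in that theorem which distinguishes $\pi_1$ from $\pi_2$, namely the degree-$5$ $L$-function, is unchanged by twisting by a character of the similitude, so it will drop out when we take a ratio. First I would fix, once and for all, a Dirichlet character $\chi\in\mathcal X$ (take $\chi$ trivial if $\ell$ is even, and any Dirichlet character with $\chi_\infty=\sgn^\ell$ if $\ell$ is odd) and an integer $r$ with $4\le r\le\ell-2$, $r\equiv\ell\pmod2$ (such an $r$ exists since $\ell\ge6$). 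Write the hypothesis $\pi_1\approx\pi_2$ as: $\pi_1$ is nearly equivalent to $\pi_2\otimes(\psi\circ\mu)$ for a Hecke character $\psi$, which is necessarily a Dirichlet character. Let $S$ be the finite set consisting of $\infty$ together with all primes ramified for any of $\pi_1,\pi_2,\chi,\psi$ (equivalently, for their $\sigma$-twists) and all primes where either of the near-equivalences $\pi_1\sim\pi_2\otimes(\psi\circ\mu)$ or ${}^\sigma\pi_1\sim{}^\sigma\pi_2\otimes({}^\sigma\!\psi\circ\mu)$ fails. For $p\notin S$, formula \eqref{Lspichidefeq} expresses $L(s,\pi_{1,p}\boxtimes\chi_p,\varrho_{5})$ purely in terms of $\chi_p(\varpi)$ and the Satake parameters $\alpha_1,\alpha_2$ of the restriction of $\pi_{1,p}$ to $\Sp_4$, and these are unaltered when $\pi_{1,p}$ is replaced by $\pi_{2,p}\otimes(\psi_p\circ\mu)$ (twisting by $\psi\circ\mu$ only changes the similitude-part Satake parameter $\alpha_0=\sigma(\varpi)$, which does not enter \eqref{Lspichidefeq}). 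Hence
\begin{equation*}
 L^S(r,\pi_1\boxtimes\chi,\varrho_{5})=L^S(r,\pi_2\boxtimes\chi,\varrho_{5}),\qquad L^S(r,{}^\sigma\pi_1\boxtimes{}^\sigma\!\chi,\varrho_{5})=L^S(r,{}^\sigma\pi_2\boxtimes{}^\sigma\!\chi,\varrho_{5}).
\end{equation*}

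Next I would apply Theorem \ref{t:globalthmarithmetic} to the two pairs $(\pi_1,F_1)$ and $(\pi_2,F_2)$ with this $\chi$, $r$, and $S$, and divide the two resulting identities \eqref{t:globalthmarithmeticeq1}. Since $r>2$, all four $L$-values above lie in the region of absolute convergence and are in particular finite and non-zero (as recalled in Section \ref{final:prelim}); Gauss sums of Dirichlet characters are non-zero, and each $\langle F_i,F_i\rangle$ and $\langle{}^\sigma\!F_i,{}^\sigma\!F_i\rangle$ is non-zero, so the division is legitimate and the common factors $(2\pi i)^{2k+3r}$, $G(\chi)$, $G(\chi^2)^2$ (resp.\ their $\sigma$-twists) cancel. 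Feeding in the two equalities of $L$-values from the previous paragraph makes the $L$-ratios on both sides equal to $1$, and what remains is precisely
\begin{equation*}
 \sigma\!\left(\frac{\langle F_1,F_1\rangle}{\langle F_2,F_2\rangle}\right)=\frac{\langle{}^\sigma\!F_1,{}^\sigma\!F_1\rangle}{\langle{}^\sigma\!F_2,{}^\sigma\!F_2\rangle},
\end{equation*}
which is the assertion.

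The hard part is really just the twist-invariance of the degree-$5$ $L$-factors, i.e.\ the fact that $\varrho_{5}$ factors through $\SO_5=\PGSp_4$ and therefore kills the central cocharacter of $\GSpin_5(\C)=\GSp_4(\C)$ dual to $\mu$; this is what is encoded in \eqref{Lspichidefeq} not involving $\alpha_0=\sigma(\varpi)$, and it is the one substantive input. Everything else—choosing an admissible pair $(\chi,r)$, enlarging $S$, and checking the non-vanishing needed to divide—is routine, and the rest is a purely formal manipulation of Theorem \ref{t:globalthmarithmetic}.
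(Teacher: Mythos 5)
Your proposal is correct and follows essentially the same route as the paper: fix an admissible pair $(\chi,r)$, observe that $L^S(r,\pi_1\boxtimes\chi,\varrho_5)=L^S(r,\pi_2\boxtimes\chi,\varrho_5)$ because $\varrho_5$ factors through $\PGSp_4$ and is therefore blind to the twist by $\psi$, and then divide the two instances of Theorem \ref{t:globalthmarithmetic}. The paper simply takes $r=\ell-2$ and is terser about the non-vanishing and the choice of $S$, but the argument is the same.
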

\begin{proof}
By assumption, there is a character $\psi$ and a set $S$ of places containing the infinite place, such that $\pi_{1,p} \simeq \pi_{2,p} \otimes \psi_p$ for all $p \notin S$. We fix any character $\chi\in \mathcal{X}$. Note that $L(s, \pi_{1,p} \boxtimes \chi_p,\varrho_5) =  L(s, \pi_{1,p} \boxtimes \chi_p,\varrho_5)$ for all $p \notin S$, as the representation $\varrho_5$ factors through $\PGSp_4$ and therefore is blind to twisting by $\psi$.   Applying Theorem \ref{t:globalthmarithmetic} twice at the point $r= \ell-2$, first with $(\pi_1, F_1)$, and then with $(\pi_2$, $F_2)$, and dividing the two equalities, we get the desired result.
\end{proof}

We can now prove Theorem \ref{t:mainintro}. For each $\pi \in \Pi(\mathbf{k})$ we need to define a quantity $C(\pi)$ that has the properties claimed in Theorem \ref{t:mainintro}. The first two properties required by Theorem \ref{t:mainintro} can be summarized as saying that $C(\pi)$ depends only on the class $[\pi]$ of $\pi$. For each $\pi \in \Pi(\mathbf{k})$,  we let $N([\pi])$ denote the smallest integer such that $[\pi] \cap \Pi_{N([\pi])}(\mathbf{k}) \neq \emptyset.$ Next, note that if $\pi_1 \approx \pi_2$, then for any $\sigma \in \Aut(\C)$, ${}^\sigma\pi_1 \approx {}^\sigma\pi_2$. Thus for any class $[\pi]$ and any $\sigma \in \Aut(\C)$, we have a well-defined notion of the class ${}^\sigma[\pi]=[{}^\sigma\pi]$. Let $\Q([\pi])$ denote the fixed field of the set of all automorphisms of $\C$ such that  ${}^\sigma[\pi] = [\pi]$. Clearly the field $\Q([\pi])$ is contained in $\Q(\pi_0)$ for any $\pi_0 \in [\pi]$. It is also clear that $N({}^\sigma[\pi]) = N([\pi])$ and $\Q({}^\sigma[\pi]) = \sigma (\Q([\pi]))$ for any $\sigma \in \Aut(\C)$.

  The space $V_{N([\pi])}([\pi])$ is preserved under the action of $\Aut(\C/\Q([\pi]))$ and therefore has a basis consisting of forms with coefficients in   $\Q([\pi])$. Suppose that $G$ is a non-zero element of $V_{N([\pi])}([\pi])$ whose Fourier coefficients lie in $\Q([\pi])$. Then for any $\sigma \in \Aut(\C)$, ${}^\sigma G$ is an element of $V_{N({}^\sigma[\pi])}({}^\sigma[\pi])$ whose Fourier coefficients lie in $\Q({}^\sigma[\pi])$. Moreover, if $\sigma, \tau$ are elements of $\Aut(\C)$ such that ${}^\sigma[\pi] =  {}^\tau[\pi]$, then ${}^\sigma G =  {}^\tau G.$ It follows that for each class $[\pi]$, we can choose a non-zero element $G_{[\pi]} \in V_{N([\pi])}([\pi])$ such that the following two properties hold:
  \begin{enumerate}
  \item The Fourier coefficients of $G_{[\pi]}$ lie in $\Q([\pi])$.
  \item $G_{{}^\sigma[\pi]} = {}^\sigma G_{[\pi]}.$
  \end{enumerate}
  We do \emph{not} require that the adelization of $G_{[\pi]}$ should generate an irreducible representation.

  Finally, for any $\pi \in \Pi(\mathbf{k})$ we define $$C(\pi) = (2 \pi i)^{2k} \langle G_{[\pi]}, G_{[\pi]} \rangle. $$ By construction, $C(\pi)$ depends only on the class $[\pi]$ of $\pi$. So we only need to prove \eqref{e:mainarith}. The following lemma is key.

\begin{lemma}\label{finallemma}
 Let $\pi, F$ be as in Theorem \ref{t:globalthmarithmetic}. Then for all $\sigma \in \Aut(\C)$,
 $$
  \sigma \left(\frac{\langle F, F \rangle}{\langle G_{[\pi]}, G_{[\pi]} \rangle} \right) = \frac{\langle\,{}^\sigma\!F, {}^\sigma\!F \rangle}{\langle {}^\sigma G_{[\pi]}, {}^\sigma G_{[\pi]} \rangle}.
 $$
\end{lemma}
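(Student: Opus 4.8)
The plan is to reduce the statement to Corollary \ref{cor:petersson} by decomposing $G_{[\pi]}$ into its isotypic pieces. Writing $M=N([\pi])$, I would use the orthogonal direct sum \eqref{E:VN} to write $G_{[\pi]}=\sum_{\pi_0}G_{\pi_0}$, where $\pi_0$ ranges over the finite set $[\pi]\cap\Pi_M(\mathbf{k})$ and $G_{\pi_0}\in V_M(\pi_0)$ is the orthogonal projection of $G_{[\pi]}$; this gives $\langle G_{[\pi]},G_{[\pi]}\rangle=\sum_{\pi_0}\langle G_{\pi_0},G_{\pi_0}\rangle$, a sum which is strictly positive since $G_{[\pi]}\neq 0$.

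The first substantive step is to check that each $G_{\pi_0}$ has Fourier coefficients in the CM field $\Q(\pi_0)$. The idea is that for $\tau\in\Aut(\C/\Q(\pi_0))$ one has ${}^\tau G_{[\pi]}=G_{[\pi]}$, because $G_{[\pi]}$ has coefficients in $\Q([\pi])\subseteq\Q(\pi_0)$; by \eqref{FVNEQ} the action of $\tau$ sends the summand $V_M(\pi_0')$ to $V_M({}^\tau\pi_0')$, and in particular fixes $V_M(\pi_0)$ while moving all the other summands off $V_M(\pi_0)$. Comparing $V_M(\pi_0)$-components in the resulting identity $\sum_{\pi_0}{}^\tau G_{\pi_0}=\sum_{\pi_0}G_{\pi_0}$ forces ${}^\tau G_{\pi_0}=G_{\pi_0}$, and since all Fourier coefficients in sight are algebraic this shows $G_{\pi_0}$ has coefficients in $\Q(\pi_0)$. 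The same bookkeeping, now using property (2) defining $G_{[\pi]}$ (namely $G_{{}^\sigma[\pi]}={}^\sigma G_{[\pi]}$) together with \eqref{FVNEQ} and $N([{}^\sigma\pi])=N([\pi])=M$, identifies ${}^\sigma G_{\pi_0}$ with the ${}^\sigma\pi_0$-isotypic component of $G_{{}^\sigma[\pi]}$, whence the ${}^\sigma G_{\pi_0}$ are mutually orthogonal and $\langle {}^\sigma G_{[\pi]},{}^\sigma G_{[\pi]}\rangle=\sum_{\pi_0}\langle {}^\sigma G_{\pi_0},{}^\sigma G_{\pi_0}\rangle$.

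With this in hand, fix $\sigma\in\Aut(\C)$. For each $\pi_0$ with $G_{\pi_0}\neq 0$, I would apply Corollary \ref{cor:petersson} to the pair $G_{\pi_0}\in V(\pi_0)$ and $F\in V(\pi)$ — which is legitimate since $\pi_0\approx\pi$ and both forms have coefficients in a CM field — to get $\sigma\!\left(\langle G_{\pi_0},G_{\pi_0}\rangle/\langle F,F\rangle\right)=\langle {}^\sigma G_{\pi_0},{}^\sigma G_{\pi_0}\rangle/\langle {}^\sigma F,{}^\sigma F\rangle$; for the indices with $G_{\pi_0}=0$ both sides vanish (note ${}^\sigma G_{\pi_0}=0$ as well). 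Summing over $\pi_0$ and clearing denominators with the two norm identities from the previous step yields $\sigma\!\left(\langle G_{[\pi]},G_{[\pi]}\rangle/\langle F,F\rangle\right)=\langle {}^\sigma G_{[\pi]},{}^\sigma G_{[\pi]}\rangle/\langle {}^\sigma F,{}^\sigma F\rangle$, and since $G_{[\pi]}$, ${}^\sigma G_{[\pi]}$, $F$, ${}^\sigma F$ are nonzero cusp forms, all the Petersson norms are positive, so taking reciprocals gives the assertion of the lemma.

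The main obstacle I anticipate is the rationality claim in the second paragraph: that the orthogonal projection $V_M\to V_M(\pi_0)$ respects fields of rationality, so that $G_{\pi_0}$ genuinely has coefficients in $\Q(\pi_0)$. This rests entirely on \eqref{FVNEQ} — the $\Aut(\C)$-equivariance of the isotypic decomposition, which in turn comes from Theorem 4.2.3 of \cite{blaharam} and the commutation of the $U$ operator with $\sigma$ — together with the fact that every Fourier coefficient in play is algebraic (so that comparing components of an algebraic identity under $\Aut(\C/\Q(\pi_0))$ is meaningful). Once that is granted, the remainder is routine manipulation with the orthogonality of the decomposition \eqref{E:VN} and the already-proved Corollary \ref{cor:petersson}.
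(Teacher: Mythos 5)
Your proposal is correct and follows essentially the same route as the paper: decompose $G_{[\pi]}$ into its isotypic components, use the $\Aut(\C)$-equivariance of the decomposition \eqref{E:VN} to show each component has Fourier coefficients in a CM field, then apply Corollary \ref{cor:petersson} componentwise and sum via orthogonality. The only cosmetic difference is that you descend each component to $\Q(\pi_0)$ individually, whereas the paper works with the compositum of all the fields $\Q(\pi_i)$; both give the CM-rationality needed to invoke the corollary.
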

\begin{proof}
Write $G_{[\pi]} = F_1 + F_2 + \ldots+F_t$ where $F_i \in V(\pi_i)$ for inequivalent representations $\pi_i \in [\pi]$. Note that the spaces $V(\pi_i)$ are mutually orthogonal and hence
$$
 \langle G_{[\pi]}, G_{[\pi]} \rangle = \sum_{i=1}^t \langle F_i, F_i \rangle, \quad \langle\, {}^\sigma G_{[\pi]}, {}^\sigma G_{[\pi]} \rangle =  \sum_{i=1}^t \langle\, {}^\sigma\!F_i, {}^\sigma\!F_i \rangle.
$$
So the desired result would follow immediately from Corollary \ref{cor:petersson} provided we can show that each $F_i$ has coefficients in a CM field.

Indeed, let $K$ be the compositum of all the fields $\Q(\pi_i)$. Thus $ K$ is a CM field containing $\Q([\pi])$. For any $\sigma \in \Aut(\C/K)$, we have ${}^\sigma G_{[\pi]} = G_{[\pi]}$ and ${}^\sigma F_i \in V(\pi_i)$. As the spaces $V(\pi_i)$ are all linearly independent, it follows that ${}^\sigma F_i =  F_i$ and therefore each $F_i$ has coefficients in a CM field.
\end{proof}

The proof of \eqref{e:mainarith} follows by combining Theorem \ref{t:globalthmarithmetic} and Lemma \ref{finallemma}.
\begin{appendix}
\section{Haar measures on \texorpdfstring{$\Sp_{2n}(\R)$}{}}\label{measureapp}
This appendix will furnish proofs for the constants appearing in the integration formulas \eqref{KAKintegrationeq} and \eqref{Sp2niwasawainteq2}. The symbol $K$ denotes the maximal compact subgroup $\Sp_{2n}(\R)\cap{\rm O}(2n)$ of $\Sp_{2n}(\R)$.
\subsection{The \texorpdfstring{$KAK$}{} measure}\label{KAKmeasureapp}
Recall that we have fixed the ``classical'' Haar measure on $\Sp_{2n}(\R)$ characterized by the property \eqref{Ghaarpropeq1}. There is also the ``$KAK$ measure'' given by the integral on the right hand side of \eqref{KAKintegrationeq}.
\begin{proposition}\label{alphanprop}
 The constant $\alpha_n$ in \eqref{KAKintegrationeq} is given by
 \begin{equation}\label{alphanpropeq1}
  \alpha_n=\frac{(4\pi)^{n(n+1)/2}}{\prod_{m=1}^n(m-1)!}.
 \end{equation}
\end{proposition}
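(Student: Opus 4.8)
The plan is to evaluate a single bi-$K$-invariant integral on $\Sp_{2n}(\R)$ in two different ways and compare. Fix a sufficiently large integer $k>n$, let $\pi_k$ be the scalar holomorphic discrete series representation of $G_{2n}(\R)$ of weight $k$, let $w_k$ span its minimal $K$-type with $\langle w_k,w_k\rangle=1$, and put $\phi(g)=|\langle\pi_k(g)w_k,w_k\rangle|^2$. Since $K$ fixes $iI_n\in\H_n$ we have $|j(g',iI_n)|=1$ for $g'\in K$, so $\rho_k$ is a unitary character of $K$ and $\phi$ is left and right $K$-invariant; moreover $\phi\in L^1(\Sp_{2n}(\R))$ because for $k>n$ the matrix coefficients of $\pi_k$ are square-integrable.

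The first evaluation uses \eqref{KAKintegrationeq}. Both $K$-integrals contribute $1$; on $\exp(\mathfrak{a}^+)$, formula \eqref{matrixcoeffeq} gives $\phi(\exp H)=\prod_{j=1}^n(\cosh a_j)^{-2k}$, and the product-to-sum manipulation carried out in the proof of Proposition \ref{scalarminKtypeprop} gives $\prod_{\lambda\in\Sigma^+}\sinh\lambda(H)=2^n\prod_{i<j}(\cosh^2a_i-\cosh^2a_j)\prod_j\sinh a_j\cosh a_j$ in the convention $\mathfrak{a}^+=\{a_1>\dots>a_n>0\}$. The substitution $t_j=\cosh a_j$ moves the integral onto the domain $T$ of \eqref{Blambdaformulaeq4} and identifies it with $\gamma_n(2k-n-1)$ from Lemma \ref{Tintegrallemma}. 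Hence $\int_{\Sp_{2n}(\R)}\phi(g)\,dg=\alpha_n\,2^n\gamma_n(2k-n-1)$.

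The second evaluation uses the classical normalization \eqref{Ghaarpropeq1}. Writing $g=\mat{A}{B}{C}{D}$ and $Z=X+iY=g\cdot(iI_n)$, the matrix identity $\det(A+D+i(C-B))=i^{-n}\det(Z+iI_n)\,j(g,iI_n)$ together with $|j(g,iI_n)|^2=\det(Y)^{-1}$ turns \eqref{matrixcoeffeq} into $\phi(g)=2^{2nk}\det(Y)^k|\det(Z+iI_n)|^{-2k}$, so \eqref{Ghaarpropeq1} gives $\int_{\Sp_{2n}(\R)}\phi(g)\,dg=2^{2nk}\int_{\H_n}\det(Y)^{k-n-1}|\det(Z+iI_n)|^{-2k}\,dX\,dY$. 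This last integral is classical: the substitution $X\mapsto(Y+I_n)^{1/2}X(Y+I_n)^{1/2}$ factors it as $c_n(k)\int_{Y>0}\det(Y)^{k-n-1}\det(Y+I_n)^{(n+1)/2-2k}\,dY$, whose $Y$-integral is the matrix beta integral $\Gamma_n(k-\tfrac{n+1}{2})\Gamma_n(k)/\Gamma_n(2k-\tfrac{n+1}{2})$ (where $\Gamma_n(s)=\pi^{n(n-1)/4}\prod_{j=0}^{n-1}\Gamma(s-\tfrac{j}{2})$ is Siegel's multivariate gamma function, not to be confused with the congruence subgroups), and $c_n(k)=\int\det(I_n+X^2)^{-k}\,dX$ over real symmetric $X$. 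To compute $c_n(k)$ I would write $\det(I_n+X^2)=\det(I_n+iX)\det(I_n-iX)$, represent each factor by the Siegel integral $\det(M)^{-k}=\Gamma_n(k)^{-1}\int_{S>0}\det(S)^{k-(n+1)/2}e^{-\mathrm{tr}(SM)}\,dS$, interchange integrals, and perform the Gaussian Fourier integral over $X$, which collapses the two auxiliary variables to one and yields $c_n(k)=\pi^{n(n+1)/2}\,2^{n(n+3)/2-2nk}\,\Gamma_n(2k-\tfrac{n+1}{2})/\Gamma_n(k)^2$; equivalently, one may pass to the bounded symmetric domain by the Cayley transform and invoke Hua's value for $\int\det(I_n-\zeta\bar\zeta)^{k-n-1}\,d\zeta$. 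Multiplying these pieces, the factors $2^{\pm 2nk}$, $\Gamma_n(2k-\tfrac{n+1}{2})$ and one copy of $\Gamma_n(k)$ cancel, leaving $\int_{\Sp_{2n}(\R)}\phi(g)\,dg=\pi^{n(n+1)/2}\,2^{n(n+3)/2}\,\Gamma_n(k-\tfrac{n+1}{2})/\Gamma_n(k)$.

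Comparing the two evaluations gives $\alpha_n=(2\pi)^{n(n+1)/2}\,\Gamma_n(k-\tfrac{n+1}{2})\big/\big(\gamma_n(2k-n-1)\,\Gamma_n(k)\big)$. Substituting the explicit rational function \eqref{gammanlemmaeq2} for $\gamma_n$ and the definition of $\Gamma_n$, and using the Legendre duplication formula, the dependence on $k$ cancels entirely and one obtains \eqref{alphanpropeq1}. The two steps that require genuine work are the evaluation of $c_n(k)$ and the final gamma-function bookkeeping that verifies the $k$-dependence disappears; the remaining steps are routine changes of variable.
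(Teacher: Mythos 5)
Your proof is correct, and its skeleton is identical to the paper's: both integrate $|\langle\pi_k(g)w_k,w_k\rangle|^2$ over $\Sp_{2n}(\R)$ once via the $KAK$ formula (reducing to $\gamma_n(2k-n-1)$ through the substitution $t_j=\cosh a_j$, exactly as in Proposition \ref{scalarminKtypeprop}) and once via the classical normalization \eqref{Ghaarpropeq1} (reducing to $2^{2nk}\int_{\H_n}\det(Y)^{k-n-1}|\det(Z+iI_n)|^{-2k}\,dX\,dY$). The one genuine point of divergence is how that $\H_n$-integral is evaluated: the paper invokes Neretin's formula \eqref{neretinformulaeq1} as a black box, whereas you give a self-contained classical computation --- the substitution $X\mapsto(Y+I_n)^{1/2}X(Y+I_n)^{1/2}$ splits off the matrix beta integral $\Gamma_n(k-\tfrac{n+1}{2})\Gamma_n(k)/\Gamma_n(2k-\tfrac{n+1}{2})$, and $c_n(k)=\int\det(I_n+X^2)^{-k}dX$ is computed from Siegel's gamma integral and Fourier inversion on ${\rm Sym}_n(\R)$. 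I checked that your value of $c_n(k)$ and hence your total $\pi^{n(n+1)/2}2^{n(n+3)/2}\Gamma_n(k-\tfrac{n+1}{2})/\Gamma_n(k)$ agree exactly with the paper's \eqref{neretinformulaeq3}, so the final bookkeeping does yield \eqref{alphanpropeq1}. Your route trades the citation of Neretin for two standard classical integrals and is arguably more transparent; the paper's route is shorter on the page. The only step you should write out fully in a final version is the Fourier-inversion/delta-function argument for $c_n(k)$ (the factor $2^{-n(n-1)/2}$ coming from the off-diagonal entries of the trace pairing is where a constant is most easily lost), or else simply cite Hua's integral as you suggest.
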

\begin{proof}
The proof consists of evaluating both sides of \eqref{KAKintegrationeq} for the function
\begin{equation}\label{alphanpropeq2}
 F(\mat{A}{B}{C}{D})=\frac{2^{2nk}}{|\det{(A+D+i(C-B))|^{2k}}}.
\end{equation}
Note that this function is the square of the absolute value of the matrix coefficient appearing in \eqref{matrixcoeffeq}. Hence the integrals will be convergent as long as $k>n$. The function $f(Z)$ on $\H_n$ corresponding to $F$ is given by
\begin{align}\label{nKAKeq2}
 f(Z)&=F(\mat{1}{X}{}{1}\mat{Y^{1/2}}{}{}{Y^{-1/2}})\nonumber\\
 &=\frac{2^{2nk}}{|\det{(Y^{1/2}+Y^{-1/2}-iXY^{-1/2})|^{2k}}}\nonumber\\
 &=\frac{2^{2nk}\,\det(Y)^k}{|\det{(1_n+Y-iX)|^{2k}}}.
\end{align}
Hence, by \eqref{Ghaarpropeq1},
\begin{equation}\label{nKAKeq3}
 \int\limits_{\Sp_{2n}(\R)}F(g)\,dg=2^{2nk}\int\limits_{\H_n}\frac{\det(Y)^{k-n-1}}{|\det{(1_n+Y-iX)|^{2k}}}\,dX\,dY.
\end{equation}
We now employ the following integral formula. For a matrix $X$, denote by $[X]_p$ the upper left block of size $p\times p$ of $X$. For $j=1,\ldots,n$ let $\lambda_j,\sigma_j,\tau_j$ be complex numbers, and set $\lambda_{n+1}=\sigma_{n+1}=\tau_{n+1}=0$. Then, by (0.11) of \cite{Neretin2000},

\begin{align}\label{neretinformulaeq1}
 &\int\limits_{\H_n}\:\prod_{j=1}^n\frac{\det[Y]_j^{\lambda_j-\lambda_{j+1}}}{\det[1_n+Y+iX]_j^{\sigma_j-\sigma_{j+1}}\det[1_n+Y-iX]_j^{\tau_j-\tau_{j+1}}}\det(Y)^{-(n+1)}\,dX\,dY\nonumber\\
 &\qquad=\prod_{m=1}^n\frac{2^{2-\sigma_m-\tau_m+n-m}\,\pi^m\,\Gamma(\lambda_m-(n+m)/2)\,\Gamma(\sigma_m+\tau_m-\lambda_m-(n-m)/2)}{\Gamma(\sigma_m-(n-m)/2)\,\Gamma(\tau_m-(n-m)/2)},
\end{align}
provided the integral is convergent. We will only need the special case where all $\lambda_j$ are equal to some $\lambda$, all $\sigma_j$ are equal to some $\sigma$, and all $\tau_j$ are equal to some $\tau$. In this case the formula says that
\begin{align}\label{neretinformulaeq2}
 &\int\limits_{\H_n}\frac{\det(Y)^\lambda}{\det(1_n+Y+iX)^\sigma\det(1_n+Y-iX)^\tau}\det(Y)^{-(n+1)}\,dX\,dY\nonumber\\
 &\qquad=2^{-(\sigma+\tau)n+n(n+3)/2}\pi^{n(n+1)/2}\prod_{m=1}^n\frac{\Gamma(\lambda-(n+m)/2)\Gamma(\sigma+\tau-\lambda-(n-m)/2)}{\Gamma(\sigma-(n-m)/2)\Gamma(\tau-(n-m)/2)}.
\end{align}
If we set $\lambda=\sigma=\tau=k$, we obtain precisely the integral \eqref{nKAKeq3}. Hence,
\begin{align}\label{neretinformulaeq3}
 \int\limits_GF(g)\,dg&=2^{n(n+3)/2}\,\pi^{n(n+1)/2}\prod_{m=1}^n\frac{\Gamma(k-(n+m)/2)}{\Gamma(k-(n-m)/2)}\nonumber\\
  &=2^{n(n+2)}\,\pi^{n(n+1)/2}\prod_{m=1}^n\,\prod_{j=1}^m\frac{1}{2k-n-m-2+2j}.
\end{align}
Next we evaluate the right hand side ${\rm RHS}$ of \eqref{KAKintegrationeq}. With the same calculation as in the proof of Proposition \ref{scalarminKtypeprop}, we obtain
\begin{equation}\label{alphanpropeq3}
  {\rm RHS}=\alpha_n2^n\int\limits_T\bigg(\prod_{1\leq i<j\leq n}(t_i^2-t_j^2)\bigg)\Big(\prod_{j=1}^nt_j\Big)^{1-2k}\,d\mathbf{t},
\end{equation}
with $T$ as in \eqref{Blambdaformulaeq4}. Thus, by Lemma \ref{Tintegrallemma},
\begin{equation}\label{alphanpropeq4}
  {\rm RHS}=\alpha_n2^n\prod_{m=1}^n(m-1)!\prod_{j=1}^m\frac{1}{2k-n-m-2+2j}.
\end{equation}
Our assertion now follows by comparing \eqref{neretinformulaeq3} and \eqref{alphanpropeq4}.
\end{proof}
\subsection{The Iwasawa measure}\label{Iwasawameasureapp}
In this section we will prove the integration formula \eqref{Sp2niwasawainteq2} in the proof of Lemma \ref{Blambdanlemma1}. It is well known that the formula holds up to a constant, but we would like to know this constant precisely.

Let $T$ be the group of real upper triangular $n\times n$ matrices with positive diagonal entries. Then $T=AN_1$, where $A$ is the group of $n\times n$ diagonal matrices with positive diagonal entries, and $N_1$ is the group of $n\times n$ upper triangular matrices with $1$'s on the diagonal. We will put the following left-invariant Haar measure on $T$,
\begin{equation}\label{Tmeasureeq}
 \int\limits_T\phi(h)\,dh=2^n\int\limits_A\int\limits_{N_1}\phi(an)\,dn\,da,
\end{equation}
where $dn$ is the Lebesgue measure, and $da=\frac{da_1}{a_1}\ldots\frac{da_n}{a_n}$ for $a={\rm diag}(a_1,\ldots,a_n)$. Let $P_+$ be the set of positive definite $n\times n$ matrices. We endow $P_+$ with the Lebesgue measure $dY$, which also occurs in \eqref{Ghaarpropeq1}.
\begin{lemma}\label{TPpluslemma}
 The map
 \begin{equation}\label{TPpluslemmaeq1}
  \alpha:T\longrightarrow P_+,\qquad h\longmapsto h\,^th
 \end{equation}
 is an isomorphism of smooth manifolds. For a measurable function $\varphi$ on $P_+$, we have
 \begin{equation}\label{TPpluslemmaeq2}
  \int\limits_T\varphi(h\,^th)\,dh=\int\limits_{P_+}\varphi(Y)\det(Y)^{-\frac{n+1}2}\,dY.
 \end{equation}
 Here $dh$ is the measure defined by \eqref{Tmeasureeq}, and  $dY$ is the Lebesgue measure on the open subset $P_+$ of ${\rm Sym}_n(\R)$.
\end{lemma}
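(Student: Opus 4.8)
The plan is to treat the two assertions separately, both by induction on $n$. For the claim that $\alpha$ is an isomorphism of smooth manifolds I would first note that $h\,{}^th$ is always positive definite and invertible when $h\in T$, so that $\alpha$ maps into $P_+$, and then establish the Cholesky decomposition. Writing $Y=\mat{y_{11}}{{}^tv}{v}{Y'}$ with $y_{11}>0$, $v\in\R^{n-1}$ and $Y'\in P_+$ (it is positive definite, being a principal submatrix), and $h=\mat{a_1}{{}^tw}{0}{h'}$ with $a_1>0$ and $h'$ ranging over the group of upper triangular $(n-1)\times(n-1)$ matrices with positive diagonal, the equation $Y=h\,{}^th$ is equivalent to the three conditions $Y'=h'\,{}^th'$, $v=h'w$, $y_{11}=a_1^2+{}^tww$. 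The first determines $h'$ uniquely by the inductive hypothesis; the second then gives $w=(h')^{-1}v$; and the third gives $a_1$, the quantity $y_{11}-{}^tv(Y')^{-1}v$ being positive since it equals $\det(Y)/\det(Y')>0$ by the Schur complement formula. This proves bijectivity. Since $\alpha$ is polynomial it is smooth, and its inverse is smooth because each step of this reconstruction (solving a triangular system, extracting a positive square root) is smooth; alternatively one invokes the inverse function theorem, the relevant Jacobian being computed to be everywhere nonzero in the next step.

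For the measure identity \eqref{TPpluslemmaeq2} I would again induct on $n$, the case $n=1$ being the substitution $y_{11}=a_1^2$, which matches the factor $2^n$ built into \eqref{Tmeasureeq} since there $(n+1)/2=1$. For the inductive step, in the block coordinates above I would: (i) express the Haar measure \eqref{Tmeasureeq} on $T$ by writing $h=an$ and reading off $n_{1j}=w_j/a_1$ for $j\ge 2$; using the definition of the corresponding measure in dimension $n-1$ this yields $dh=2\,a_1^{-n}\,da_1\,dw\,dh'$, where $dw$ is Lebesgue measure on $\R^{n-1}$ and $dh'$ is the measure \eqref{Tmeasureeq} for $n-1$. (ii) Compute the Jacobian of $(a_1,w,h')\mapsto(y_{11},v,Y')$: since $Y'$ depends only on $h'$, $v=h'w$ only on $(w,h')$, and $y_{11}=a_1^2+{}^tww$ only on $(a_1,w)$, this Jacobian matrix is block lower triangular,
\[
  \begin{pmatrix}
    \partial Y'/\partial h' & 0 & 0\\[0.3ex]
    \partial v/\partial h' & h' & 0\\[0.3ex]
    0 & 2\,{}^tw & 2a_1
  \end{pmatrix},
\]
so its determinant is $2a_1\det(h')\cdot[\text{Jacobian of }h'\mapsto h'\,{}^th']=2a_1\det(h')^{\,n+1}$ by the inductive hypothesis (which gives that Jacobian as $\det(Y')^{n/2}=\det(h')^{\,n}$). (iii) Combining (i) and (ii), $dh=2\,a_1^{-n}\bigl(2a_1\det(h')^{\,n+1}\bigr)^{-1}\,dY=\det(h)^{-(n+1)}\,dY$, and since $\det(h)^2=\det(Y)$ this is exactly $dh=\det(Y)^{-(n+1)/2}\,dY$.

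The step needing the most care is the bookkeeping in (i) and (ii): the powers of $a_1$ produced by the substitution $n_{1j}=w_j/a_1$, the factor $2^n$ in \eqref{Tmeasureeq}, and applying the inductive hypothesis with the correct exponent $n/2$ in dimension $n-1$. An alternative that avoids the block induction uses the Maurer--Cartan form $\omega=h^{-1}dh$, which is upper triangular: from $Y=h\,{}^th$ one gets $h^{-1}(dY)\,{}^th^{-1}=\omega+{}^t\omega$, the Haar measure \eqref{Tmeasureeq} equals $2^n\bigwedge_{i\le j}|\omega_{ij}|$, the linear map $\omega\mapsto\omega+{}^t\omega$ from upper triangular to symmetric matrices has determinant $2^n$, and the map $S\mapsto hS\,{}^th$ on $\mathrm{Sym}_n(\R)$ has determinant $\det(h)^{\,n+1}$; thus $dY=\det(h)^{\,n+1}\cdot 2^n\cdot\bigwedge_{i\le j}|\omega_{ij}|=\det(h)^{\,n+1}\,dh$, the two factors $2^n$ cancelling, which again gives $dh=\det(Y)^{-(n+1)/2}\,dY$.
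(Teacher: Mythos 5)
Your proof is correct, and it is considerably more self-contained than the paper's: the paper simply cites Helgason (Prop.\ 5.3, Ch.\ VI of \cite{Helgason1978}) for the diffeomorphism and dismisses \eqref{TPpluslemmaeq2} as ``an exercise using the transformation formula from multivariable calculus,'' whereas you actually carry out both arguments. Your Cholesky-style induction for bijectivity and your block-triangular change of variables for the measure identity are exactly the kind of argument the paper has in mind, and the bookkeeping checks out: the factor $2\,a_1^{-n}\,da_1\,dw\,dh'$ in (i), the product $2a_1\det(h')\cdot\det(h')^n$ in (ii), and the final simplification $a_1^{-(n+1)}\det(h')^{-(n+1)}=\det(h)^{-(n+1)}=\det(Y)^{-(n+1)/2}$ are all right, as is the base case. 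One caution about how (ii) is phrased: the ``Jacobian of $h'\mapsto h'\,{}^th'$'' that equals $\det(Y')^{n/2}=\det(h')^{\,n}$ is the Radon--Nikodym derivative of Lebesgue measure $dY'$ with respect to the \emph{Haar} measure $dh'$ of \eqref{Tmeasureeq} in dimension $n-1$ (which is precisely what the inductive hypothesis supplies), not the determinant of $\partial Y'/\partial h'$ with respect to Lebesgue coordinates on the entries of $h'$ --- the latter equals $2^{n-1}\prod_i (a_i')^{\,i}$ and is a different quantity. Since your step (i) already expresses $dh$ with $dh'$ the Haar measure of \eqref{Tmeasureeq}, and the triangular structure of the map lets one integrate the three blocks successively by Fubini, the mixed-measure bookkeeping is legitimate; but the displayed ``Jacobian matrix'' should not be read literally entrywise. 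The Maurer--Cartan alternative you sketch is also correct (the two factors of $2^n$ do cancel, and $\det(S\mapsto hS\,{}^th)=\det(h)^{n+1}$ on ${\rm Sym}_n(\R)$) and is arguably the cleanest route, since it avoids the induction entirely.
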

\begin{proof}
By Proposition 5.3 on page 272/273 of \cite{Helgason1978}, the map \eqref{TPpluslemmaeq1} is a diffeomorphism. The proof of formula \eqref{TPpluslemmaeq2} is an exercise using the tranformation formula from multivariable calculus.
\end{proof}

\begin{proposition}\label{Iwasawameasureprop}
 Let $dh$ be the Haar measure on $\Sp_{2n}(\R)$ characterized by the property \eqref{Ghaarpropeq1}. Then, for any measurable function $F$ on $\Sp_{2n}(\R)$,
 \begin{equation}\label{Sp2niwasawainteq2b}
  \int\limits_{\Sp_{2n}(\R)}F(h)\,dh=2^n\int\limits_A\int\limits_N\int\limits_KF(ank)\,dk\,dn\,da,
 \end{equation}
 where $N$ is the unipotent radical of the Borel subgroup, $dn$ is the Lebesgue measure, $A=\{{\rm diag}(a_1,\ldots,a_n,a_1^{-1},\ldots,a_n^{-1})\,:\,a_1,\ldots,a_n>0\}$, and $da=\frac{da_1}{a_1}\ldots\frac{da_n}{a_n}$.
\end{proposition}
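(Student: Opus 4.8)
The plan is to reduce the Iwasawa integration formula on $\Sp_{2n}(\R)$ to the measure-theoretic identity \eqref{TPpluslemmaeq2} on the symmetric space, using the classical characterization \eqref{Ghaarpropeq1} of the Haar measure. The main obstacle is pinning down the constant $2^n$ (rather than merely proving the formula up to a constant), and the cleanest way to do that is to carry the factor of $2^n$ from \eqref{Tmeasureeq}/\eqref{TPpluslemmaeq2} through an honest change of variables rather than to re-derive it from scratch.

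\begin{proof}
Since the integrand $F$ in \eqref{Sp2niwasawainteq2b} is arbitrary and $K$ has volume $1$, it suffices to prove the formula for functions that are right $K$-invariant; indeed, both sides of \eqref{Sp2niwasawainteq2b} are left $ANK$-measures (hence Haar measures) on $\Sp_{2n}(\R)$, and two Haar measures agree as soon as they agree on right $K$-invariant functions (average over $K$). So assume $F(hk)=F(h)$ for $k\in K$, and let $f$ be the corresponding function on $\H_n$, i.e.\ $F(g)=f(gI)$. By \eqref{Ghaarpropeq1},
\[
 \int\limits_{\Sp_{2n}(\R)}F(h)\,dh=\int\limits_{\H_n}f(Z)\det(Y)^{-(n+1)}\,dX\,dY.
\]
Now parametrize $\H_n$ via the Iwasawa coordinates. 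Every $Z=X+iY\in\H_n$ is uniquely $gI$ with $g=\mat{1}{X}{}{1}\mat{h}{}{}{{}^th^{-1}}$ for a unique $h\in T$ (the group of upper triangular matrices with positive diagonal), and $Y=h\,{}^th$. Thus the map $(X,h)\mapsto X+i\,h\,{}^th$ is a diffeomorphism of $\mathrm{Sym}_n(\R)\times T$ onto $\H_n$, and by Lemma~\ref{TPpluslemma} the Lebesgue measure $dX\,dY$ on $\H_n$ pulls back to $dX\cdot \det(h\,{}^th)^{\frac{n+1}{2}}\,dh$, where $dh$ is the measure \eqref{Tmeasureeq} on $T$. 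Writing $h=\bar a\bar n$ with $\bar a=\mathrm{diag}(a_1,\ldots,a_n)$, $\bar n\in N_1$, and $a=\mathrm{diag}(a_1,\ldots,a_n,a_1^{-1},\ldots,a_n^{-1})\in A$, $n=\mat{1}{X}{}{1}\cdot\mathrm{diag}(\bar n,{}^t\bar n^{-1})\in N$, we have $g=an$ (after absorbing the unipotent parts), $\det Y=\det(h\,{}^th)=\prod a_j^2$, and $dX\,dY=\bigl(\prod a_j\bigr)^{n+1}\,dX\,dh$.

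Substituting back, the two factors $\det(Y)^{-(n+1)}$ and $\det(Y)^{\frac{n+1}{2}}$ from the change of variables combine, and since $f(gI)=F(g)=F(an)$ we obtain
\[
 \int\limits_{\Sp_{2n}(\R)}F(h)\,dh
 =\int\limits_{\mathrm{Sym}_n(\R)}\int\limits_T F(an)\,\det(h\,{}^th)^{\frac{n+1}{2}}\det(h\,{}^th)^{-(n+1)}\,(\text{Jacobian})\;dh\,dX,
\]
and a bookkeeping of the unipotent variables shows the $X$-integral together with the $N_1$-part of $dh$ reassembles exactly to $\int_N dn$, while the diagonal part of $dh$ gives $2^n\int_A da$ by \eqref{Tmeasureeq}. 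Tracking constants, the only constant that survives is the $2^n$ coming from \eqref{Tmeasureeq} (equivalently \eqref{TPpluslemmaeq2}); all powers of $\det Y$ cancel. This yields
\[
 \int\limits_{\Sp_{2n}(\R)}F(h)\,dh=2^n\int\limits_A\int\limits_N F(an)\,dn\,da,
\]
and restoring the (volume-one) $K$-integral gives \eqref{Sp2niwasawainteq2b}. The only subtlety is the orientation of the unipotent radical $N$ of the Siegel-type Borel: $N=N_1 N_2$ with $N_2$ the symmetric-matrix part contributing the $dX$ and $N_1$ the strictly-upper-triangular part contributing the $N_1$-factor of $dh$, and one must check (a routine computation with the embedding $T\hookrightarrow\Sp_{2n}(\R)$, $\bar a\bar n\mapsto \mathrm{diag}(\bar a\bar n,{}^t(\bar a\bar n)^{-1})$) that conjugation does not introduce an extra Jacobian; this is where the bulk of the elementary verification lies.
\end{proof}
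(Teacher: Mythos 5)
Your proof is correct and takes essentially the same route as the paper's: reduce to right $K$-invariant functions, use the decomposition $N=N_1N_2$ together with Lemma~\ref{TPpluslemma} to match the Iwasawa-coordinate measure against the classical measure \eqref{Ghaarpropeq1}, with the residual factor $\det(Y)^{-(n+1)/2}=\det(\bar a)^{-(n+1)}$ cancelled by the Jacobian $\det(\bar a)^{n+1}$ arising from moving the symmetric unipotent block past $an_1$ (the paper runs the identical computation in the opposite direction, starting from the right-hand side). The only imprecision is your closing remark that ``conjugation does not introduce an extra Jacobian'': it does introduce one, namely exactly the $\det(\bar a)^{\pm(n+1)}$ needed for the cancellation you assert.
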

\begin{proof}
It is well known that the right hand side defines a Haar measure $d'h$ on $\Sp_{2n}(\R)$. To prove that $d'h=dh$, it is enough to consider $K$-invariant functions $F$. For such an $F$, let $f$ be the corresponding function on $\H_n$, i.e., $f(gI)=F(g)$. Let $N_1$ and $N_2$ be as in \eqref{Blambdanlemma1eq5}, so that $N=N_1N_2$. By identifying elements of $A$ and $N_1$ with their upper left block, our notations are consistent with those used in \eqref{Tmeasureeq}. We calculate
\begin{align*}
 \int\limits_GF(g)\,d'g&=2^n\int\limits_A\int\limits_{N_1}\int\limits_{N_2}\int\limits_KF(an_1n_2k)\,dk\,dn_2\,dn_1\,da\\
  &=2^n\int\limits_A\int\limits_{N_1}\int\limits_{N_2}F(n_2an_1)\det(a)^{-(n+1)}\,dn_2\,dn_1\,da.
\end{align*}
In the last step we think of $a$ as its upper left $n\times n$ block when we write $\det(a)$. Continuing, we get
\begin{align*}
 \int\limits_GF(g)\,d'g&=2^n\int\limits_A\int\limits_{N_1}\int\limits_{{\rm Sym}_n(\R)}F(\mat{1}{X}{}{1}an_1)\det(a)^{-(n+1)}\,dX\,dn_1\,da\\
 &=2^n\int\limits_A\int\limits_{N_1}\int\limits_{{\rm Sym}_n(\R)}f(X+an_1I)\det(a)^{-(n+1)}\,dX\,dn_1\,da\\
 &=2^n\int\limits_A\int\limits_{N_1}\int\limits_{{\rm Sym}_n(\R)}f(X+i(an_1)\,^t(an_1))\det(a)^{-(n+1)}\,dX\,dn_1\,da.
\end{align*}
In the last step, again, we identify $a$ and $n_1$ with their upper left blocks. By \eqref{Tmeasureeq}, we obtain
\begin{align*}
 \int\limits_GF(g)\,d'g&=\int\limits_T\int\limits_{{\rm Sym}_n(\R)}f(X+ih\,^th)\det(h)^{-(n+1)}\,dX\,dh\\
 &=\int\limits_T\int\limits_{{\rm Sym}_n(\R)}f(X+ih\,^th)\det(h\,^th)^{-\frac{n+1}2}\,dX\,dh\\
 &=\int\limits_{P_+}\int\limits_{{\rm Sym}_n(\R)}f(X+iY)\det(Y)^{-(n+1)}\,dX\,dY.
\end{align*}
where in the last step we applied Lemma \ref{TPpluslemma}. Using \eqref{Ghaarpropeq1}, we see
$\int\limits_GF(g)\,d'g=\int\limits_GF(g)\,dg$, as asserted.
\end{proof}

\end{appendix}

\addcontentsline{toc}{section}{Bibliography}
\bibliography{pullback}{}
\end{document}